\newcommand\step{\Sigma}
\newcommand\vQ{\vec Q}
\newcommand\reroot{\uparrow}
\newcommand\marg{\downarrow}
\newcommand\nix{\,\cdot\,}
\newcommand\vV{\vec V}
\newcommand\vS{\vec S}
\newcommand\ind{\mathrm{ind}}
\newcommand\atom{\delta}
\newcommand\thet{\vartheta}
\newcommand\ism{\cong}
\newcommand\G{\vec G}
\newcommand\T{\vec T}
\numberwithin{equation}{section}
\def\vec#1{\mathchoice{\mbox{\boldmath$\displaystyle#1$}}
{\mbox{\boldmath$\textstyle#1$}}
{\mbox{\boldmath$\scriptstyle#1$}}
{\mbox{\boldmath$\scriptscriptstyle#1$}}}
\newcommand\SIGMA{\vec\sigma}
\newcommand\TAU{\vec\tau}
\newtheorem{definition}{Definition}[section]
\newtheorem{claim}[definition]{Claim}
\newtheorem{example}[definition]{Example}
\newtheorem{remark}[definition]{Remark}
\newtheorem{theorem}[definition]{Theorem}
\newtheorem{lemma}[definition]{Lemma}
\newtheorem{proposition}[definition]{Proposition}
\newtheorem{corollary}[definition]{Corollary}
\newtheorem{fact}[definition]{Fact}
\newcommand\fG{\mathfrak{G}}
\newcommand\fT{\mathfrak{T}}
\newcommand\cutm{\Delta_{\Box}}
\newcommand\Cutm{D_{\Box}}
\newcommand\cB{\mathcal{B}}
\newcommand\cF{\mathcal{F}}
\newcommand\cE{\mathcal{E}}
\newcommand\cS{\mathcal{S}}
\newcommand\cL{\mathcal{L}}
\newcommand\cM{\mathcal{M}}
\newcommand\cP{\mathcal{P}}
\newcommand\cX{\mathcal{X}}
\newcommand\cV{\mathcal{V}}
\newcommand\cW{\mathcal{W}}
\newcommand\eps{\varepsilon}
\newcommand\ZZ{\mathbb{Z}}
\newcommand\NN{\mathbb{N}}
\newcommand\Var{\mathrm{Var}}
\newcommand\Erw{\mathrm{E}}
\newcommand{\vecone}{\vec{1}}
\newcommand{\Po}{{\rm Po}}
\newcommand{\Bin}{{\rm Bin}}
\newcommand{\Be}{{\rm Be}}
\newcommand\TV[1]{\left\|{#1}\right\|_{1}}
\newcommand\bc[1]{\left({#1}\right)}
\newcommand\cbc[1]{\left\{{#1}\right\}}
\newcommand{\bck}[1]{\left\langle{#1}\right\rangle}
\newcommand\brk[1]{\left\lbrack{#1}\right\rbrack}
\newcommand\abs[1]{\left|{#1}\right|}
\newcommand\RR{\mathbb{R}}
\newcommand{\stacksign}[2]{{\stackrel{\mbox{\scriptsize #1}}{#2}}}
\newcommand{\tensor}{\otimes}
\newcommand{\Erdos}{Erd\H{o}s}
\newcommand{\Renyi}{R\'enyi}
\newcommand{\Lovasz}{Lov\'asz}
\newcommand{\Szemeredi}{Szemer\'edi}
\newcommand\pr{\mathrm{P}} 
\newcommand\Lem{Lemma}
\newcommand\Prop{Proposition}
\newcommand\Thm{Theorem}
\newcommand\Cor{Corollary}
\newcommand\Sec{Section}
\newcommand\Chap{Chapter}
\begin{document}

\title{Limits of discrete distributions and Gibbs measures on random graphs}

\author[Coja-Oghlan, Perkins, Skubch]{Amin Coja-Oghlan$^{*}$, Will Perkins, Kathrin Skubch}
\thanks{$^{*}$The research leading to these results has received funding from the European Research Council under the European Union's Seventh 
Framework Programme (FP7/2007-2013) / ERC Grant Agreement n.\ 278857--PTCC}

\address{Amin Coja-Oghlan, {\tt acoghlan@math.uni-frankfurt.de}, Goethe University, Mathematics Institute, 10 Robert Mayer St, Frankfurt 60325, Germany.}

\address{Will Perkins, {\tt math@willperkins.org}, School of Mathematics, University of Birmingham, Edgbaston, Birmingham, UK.}

\address{Kathrin Skubch, {\tt skubch@math.uni-frankfurt.de}, Goethe University, Mathematics Institute, 10 Robert Mayer St, Frankfurt 60325, Germany.}

\begin{abstract}
\noindent
Building upon the theory of graph limits and the Aldous-Hoover representation
and inspired by Panchenko's work on asymptotic Gibbs measures [Annals of Probability 2013],
we construct continuous embeddings of discrete probability distributions.
We show that the theory of graph limits induces a meaningful notion of convergence and derive a corresponding version of the \Szemeredi\ regularity lemma.
Moreover, complementing recent work [Bapst et.\ al.\ 2015], we apply these results to Gibbs measures induced by sparse random factor graphs and verify the ``replica symmetric solution'' predicted in the physics literature under the assumption of non-reconstruction.

\bigskip
\noindent
\emph{Mathematics Subject Classification:} 05C80, 82B44
\end{abstract}

\maketitle

\section{Introduction}\label{Sec_intro}

\noindent
The systematic study of limits of discrete structures such as graphs or hypergraphs emerged about a decade ago~\cite{BCLSV,Lovasz}.
It has since become a prominent and fruitful area of research, with numerous applications in combinatorics and beyond.
The basic idea is to embed discrete objects into a ``continuous'' space so that tools from analysis, topology and measure theory can be brought to bear.
Conversely, the connection extends genuinely combinatorial ideas such as the \Szemeredi\ regularity lemma to the continuous world.

In this paper we study ``analytic embeddings'' of  probability distributions on discrete cubes, i.e.,
sets of the form $\Omega^V$ for some fixed finite set $\Omega$ and a large finite set $V$.
Arguably, probability measures on discrete cubes are among the most important and most basic objects in combinatorics, computer science and mathematical physics.
For instance, they occur as the Gibbs measures of finite spin systems such as the Ising model~\cite{MM}.
In this case $\Omega=\{\pm1\}$ and $V$ is a finite set of lattice points, whose size ultimately goes to infinity in the ``thermodynamic limit''.
Similarly, much of the theory of Markov Chain Monte Carlo deals with the correlations that, e.g., the uniform distribution
on the set of $k$-colorings of some graph $G=(V,E)$ induces~\cite{Levin}.
Thus, $\Omega$ would be the set $[k]=\{1,\ldots,k\}$ and $V$ is the vertex set of a (large) graph.
We are going to construct limiting objects of such measures, state a corresponding regularity lemma and illustrate applications to random graphs.

Perhaps the most prominent related construction is the Aldous--Hoover representation of exchangeable arrays~\cite{Aldous,Hoover}.
Its connection to graph limits was observed by Diaconis and Janson~\cite{DJ}.
Furthermore, Panchenko~\cite{Panchenko} used the Aldous-Hoover representation to introduce the notion of ``asymptotic Gibbs measures'',
the protagonists of his work on ``mean-field models'' such as the Sherrington--Kirkpatrick model.
Indeed, Panchenko has a proof of the Aldous-Hoover result from graph limits~\cite[Appendix~A]{PanchenkoBook}.
Our embedding of discrete measures into continuous space can be seen as a generalization of this approach
to arbitrary measures on discrete cubes.

The main contributions of the present work are as follows.
First, in \Sec~\ref{Sec_cubes} we construct a natural embedding of general measures on discrete cubes into a ``continuous'' metric space.
We highlight connections to the Aldous--Hoover representation and the theory of graph limits.
Moreover, we state a ``regularity lemma'' for such general measures that is a bit stronger than the regularity lemma of Bapst and Coja-Oghlan~\cite{Victor}.
In a sense, the main point of \Sec~\ref{Sec_cubes} is to study discrete probability measures using some of the main ideas from the theory of graph limits.
Second, building upon ideas of Panchenko~\cite{Panchenko}, in \Sec~\ref{Sec_Gibbs} we apply the concepts of \Sec~\ref{Sec_cubes}
to Gibbs measures induced by sparse random graphs.
In particular, we verify the ``replica symmetric solution'' predicted in the physics literature~\cite{pnas}
under the assumption that the sequence Gibbs measures converges in probability as the size $n$ of the random graph tends to infinity.
Additionally, we will see that the sequence of Gibbs measures induces a ``geometric'' limiting object 
 on the (infinite) Galton-Watson tree that describes the local structure of the sparse random graph.
Further, we show that a spatial mixing property called non-reconstruction is a sufficient condition for ``replica symmetry''.

Many of the proofs build upon known methods,  although the proofs in \Sec~\ref{Sec_Gibbs} require quite a bit of technical work.
Specific references will be given as we proceed, but let us point to \Lovasz' comprehensive treatment~\cite{Lovasz} of graph limits
and to Janson's work~\cite{Janson} that provides some of the measure-theoretic foundations.

\subsection*{Notation and preliminaries}
For a measurable space $(\Omega,\cF)$ we denote the set of probability measures by $\cP(\Omega,\cF)$ or briefly by $\cP(\Omega)$.
Moreover, for $x\in\Omega$ we denote by $\atom_x\in\cP(\Omega)$ the Dirac measure on $x$.
Further, we write $\lambda(\nix)$ for the Lebesgue measure on $\RR$.
If $\Omega$ is a finite set, then the $\sigma$-algebra is always understood to be its power set.
Given $\mu\in\cP(\Omega^n)$, we write $\SIGMA,\SIGMA',\ldots$ for independent samples from $\mu$.
Additionally, if $X:\Omega^n\to\RR$ is a function, then $\bck{X(\SIGMA)}_\mu=\sum_{\sigma\in\Omega^n}X(\sigma)\mu(\sigma)$
signifies the mean of $X$.

We often use the following notation to define a probability measure $p$ on a finite set $\Omega$.
If $f:\Omega\to[0,1]$ is a function that is not identically $0$, then
	$p(\omega)\propto f(\omega)$
is a shorthand for $$p(\omega)=f(\omega)\big/\sum_{\omega'\in\Omega}f(\omega').$$

We shall frequently work with the spaces $L_1(U,\RR^l)$ for a measurable $U\subset\RR^k$ (with $k,l$ natural numbers).
Recall that this is the space of measurable functions $f$ such that $\int|f|<\infty$, up to equality almost everywhere.
We tacitly identify the elements of $L_1(U,\RR^l)$ with specific (fixed) representatives, i.e., measurable functions $U\to\RR^l$, so that we can write $f(x)$ for $x\in U$.
Moreover, we remember that a {\em Polish space} is a complete metric space that has a countable dense subset.
Examples include the spaces $\RR^k$ and $L_1(U,\RR^l)$ for any $k,l$.

If $\mu\in\cP(\Omega)$ and $\nu\in\cP(\Omega')$, then we write $\Gamma(\mu,\nu)$ for the set of all couplings of $\mu,\nu$.
Thus, $\Gamma(\mu,\nu)$ is the set of all a probability measures $\gamma\in\cP(\Omega\times\Omega')$
such that $(x,y)\in\Omega\times\Omega'\mapsto x$ maps $\gamma$ to $\mu$ and
$(x,y)\in\Omega\times\Omega'\mapsto y$ maps $\gamma$ to $\nu$.
Moreover, $S_{[0,1)}$ is the set of all measurable $f:[0,1)\to[0,1)$ such that $f(\lambda)=\lambda$.
In addition, for a random variable $X:(\Omega,\mu)\to\Omega'$ we write $\cL(X)=X(\mu)$ for the distribution of $X$,
i.e., the probability measure on $\Omega'$ defined by $A'\mapsto\mu(X^{-1}(A'))$ for measurable $A'\subset\Omega'$.
Finally, if $\mu$ is a probability measure on a product space $\Omega^V$ and $U\subset V$, then $\mu_{\marg U}$
signifies the marginal distribution of $\mu$ on the coordinates $U$.
If $U=\cbc u$ is a singleton, then we write $\mu_{\marg u}=\mu_{\marg\{u\}}$.

We recall that
for probability measures $\mu,\nu$ defined on a metric space $E$  with metric $D(\nix,\nix)$ the
{\em $L_1$-Wasserstein metric} is defined as
	$$d_1(\mu,\nu)=\inf\cbc{\int_{\Omega\times\Omega}D(x,y)\, d\gamma(x,y):\gamma\in\Gamma(\mu,\nu)}.$$
For probability measures on a compact Polish space this metric induces the topology of weak convergence.

We are going to use the following well-known property of the Poisson distribution, and call it the ``Chen-Stein'' property after \cite{chen}.
If $X$ has distribution $\Po(d)$ for some $d>0$ and if $f(X)$ is a function such that $\Erw[X|f(X)|]<\infty$, then
	\begin{align}\label{eqmemoryless}
	\Erw[X f(X)]=d\Erw[f(X+1)]
	\end{align}

\section{Probability measures on cubes}\label{Sec_cubes}

\subsection{The cut metric}

Fix a finite set $\Omega$ and let $n\geq1$ be an integer.
We would like to identify a measure $\mu\in\cP(\Omega^n)$ with a ``continuous object''.
To this end, we represent a point $\sigma=(\sigma_1,\ldots,\sigma_n)\in\Omega^n$ by the function
	\begin{align*}
	\hat\sigma:[0,1)\to\cP(\Omega),\qquad x\mapsto\sum_{i=1}^n\delta_{\sigma_i}\vecone\{x\in[(i-1)/n,i/n)\}.
	\end{align*}
Thus, $\hat\sigma$ is a step function whose value on the interval $[(i-1)/n,i/n)$ equals the Dirac measure $\delta_{\sigma_i}\in\cP(\Omega)$.
Because $\Omega$ is finite, $\cP(\Omega)$ is just a simplex of dimension $|\Omega|-1$ in $\RR^\Omega$.
The induced Borel $\sigma$-algebra turns $\cP(\Omega)$ into a Polish space.
Moreover, $\mu\in\cP(\Omega^n)$ corresponds to the probability measure $\hat\mu$
on $L_1([0,1],\RR^\Omega)$ defined by
	\begin{equation}\label{eqdisc2cont}
	\hat\mu=\bck{\atom_{\hat\SIGMA}}_\mu=\sum_{\sigma\in\Omega^n}\mu(\sigma)\atom_{\hat\sigma}.
	\end{equation}
Indeed, let $\step_\Omega=L_1([0,1),\cP(\Omega))\subset L_1([0,1),\RR^\Omega)$ be the space of all measurable functions $f:[0,1)\to\cP(\Omega)$
with values in $\cP(\Omega)$ up to equality almost surely.
Then $\step_\Omega$ is Polish and $\hat\mu\in\cP(\step_\Omega)$.
Clearly, the map $\mu\mapsto\hat\mu$ is one-to-one.
Further, by extension of the discrete notation, we denote the mean of a measurable $X$ on $\step_\Omega$
with respect to $\mu\in\cP(\step_\Omega)$ by
	$$\bck{X(\SIGMA)}_\mu=\int_{\step_\Omega}X(\sigma)\,d\mu(\sigma).$$
Then the continuous analogue of (\ref{eqdisc2cont}) reads
	$\mu=\bck{\atom_{\SIGMA}}_\mu$.

Following the theory of graph limits~\cite{Lovasz},
we define the {\em strong cut metric} as
	\begin{align}\label{eqmetric1}
	\Cutm(\mu,\nu)&=\inf_{\gamma\in\Gamma(\mu,\nu)}\ \sup_{B\subset\step_\Omega^2,U\subset[0,1)}	
		\TV{\int_{B}\int_U \sigma_x-\tau_x\, dx\, d\gamma(\sigma,\tau)}\qquad(\mu,\nu\in\cP(\step_\Omega)),
	\end{align}
where, of course, $B,U$ are understood to be measurable.
Additionally,
	\begin{align}\label{eqmetric2}
	\cutm(\mu,\nu)&=\inf_{\gamma\in\Gamma(\mu,\nu),s\in S_{[0,1)}}\ \sup_{B\subset\step_\Omega^2,U\subset[0,1)}	
		\TV{\int_{B}\int_U \sigma_x-\tau_{s(x)}\, dx\, d\gamma(\sigma,\tau)}\qquad(\mu,\nu\in\cP(\step_\Omega)),
	\end{align}
is the  {\em weak cut metric}.
The general results~\cite{Janson} imply

\begin{fact}\label{Fact_attained}
$\Cutm(\nix,\nix)$ is a metric and $\cutm(\nix,\nix)$ is a pseudo-metric on $\cP(\step_\Omega)$.
Moreover, the infima in (\ref{eqmetric1})--(\ref{eqmetric2}) are attained.
\end{fact}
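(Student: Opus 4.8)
The plan is to verify the (pseudo-)metric axioms directly and to reduce the attainment claim to a compactness-and-lower-semicontinuity argument, following throughout the template that Janson~\cite{Janson} develops for the cut distance of graphons (with $\step_\Omega$ playing the role of the vertex space). Non-negativity is immediate, and $\Cutm(\mu,\nu)=\Cutm(\nu,\mu)$ follows upon interchanging the two coordinates of a coupling $\gamma\in\Gamma(\mu,\nu)$ and of the test set $B\subset\step_\Omega^2$, since the $\ell_1$-norm $\TV{\nix}$ is insensitive to an overall sign. For $\cutm$ the symmetry is less transparent, because the reparametrisation $s\in S_{[0,1)}$ acts on only one side; I would obtain it by first replacing, in (\ref{eqmetric2}), the measure-preserving map $s$ by an arbitrary coupling $\rho\in\Gamma(\lambda,\lambda)$ of Lebesgue measure with itself (so that $\int_U h(x,s(x))\,dx$ becomes $\int_{U\times[0,1)}h\,d\rho$), which gives a manifestly symmetric formulation as $\rho$ may then be transposed; the equivalence of this ``fractional'' description with (\ref{eqmetric2}) is the one genuinely delicate point, discussed below.

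For the triangle inequality I would use the gluing lemma: given $\gamma_{12}\in\Gamma(\mu_1,\mu_2)$ and $\gamma_{23}\in\Gamma(\mu_2,\mu_3)$, since $\step_\Omega$ is Polish there is a $\gamma_{123}\in\cP(\step_\Omega^3)$ with marginals $\gamma_{12}$ on the first two and $\gamma_{23}$ on the last two coordinates; let $\gamma_{13}\in\Gamma(\mu_1,\mu_3)$ be its outer marginal. For measurable $B\subset\step_\Omega^2$, $U\subset[0,1)$ and a sign vector $s\in\{\pm1\}^\Omega$ one writes, inside the $\gamma_{123}$-integral, $\bck{s,\sigma^1_x-\sigma^3_x}=\bck{s,\sigma^1_x-\sigma^2_x}+\bck{s,\sigma^2_x-\sigma^3_x}$ and uses $\vecone_B\cdot g\le g^+$ to replace the indicator of $B$ (which also involves the ``wrong'' coordinate) by the indicator of $\{g>0\}$, where $g$ is the integrand on either side and now depends only on the two coordinates seen by $\gamma_{12}$ resp.\ $\gamma_{23}$; summing the two resulting bounds and taking infima gives $\Cutm(\mu_1,\mu_3)\le\Cutm(\mu_1,\mu_2)+\Cutm(\mu_2,\mu_3)$. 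For $\cutm$ one additionally glues and composes the reparametrisations, invoking that $\sup_U\abs{\int_U h(r(x))\,dx}=\sup_U\abs{\int_U h(x)\,dx}$ for every measure-preserving $r$. To see that $\Cutm$ separates points, suppose $\Cutm(\mu,\nu)=0$; by the attainment part (established below) there is $\gamma\in\Gamma(\mu,\nu)$ with $\int_B\int_U(\sigma_x-\tau_x)\,dx\,d\gamma=0$ for all $B,U$. Choosing $B$ to be the set on which a prescribed coordinate of $\int_U(\sigma_x-\tau_x)\,dx$ is positive, and then the one on which it is negative, forces $\int_U(\sigma_x-\tau_x)\,dx=0$ for $\gamma$-a.e.\ $(\sigma,\tau)$ and each fixed $U$; letting $U$ run through the countably many dyadic subintervals of $[0,1)$ gives $\sigma=\tau$ in $\step_\Omega$ for $\gamma$-a.e.\ $(\sigma,\tau)$, so $\gamma$ sits on the diagonal and $\mu=\nu$. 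Only a pseudo-metric can hold for $\cutm$, since relabelling $[0,1)$ leaves $\cutm$ unchanged.

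For attainment, $\mu,\nu$ are tight (as $\step_\Omega$ is Polish), so $\Gamma(\mu,\nu)\subset\cP(\step_\Omega^2)$ is tight and weakly closed, hence weakly compact by Prokhorov's theorem; likewise $\Gamma(\lambda,\lambda)$ on $[0,1]^2$. It then suffices to show that $\gamma\mapsto\sup_{B,U}\TV{\int_B\int_U(\sigma_x-\tau_x)\,dx\,d\gamma}$ (and the analogous weak-cut functional of the pair $(\gamma,\rho)$) is lower semicontinuous for the weak topology. The two facts that make this work are: the map $\sigma\mapsto\int_U\sigma_x\,dx$ is continuous from $\step_\Omega\subset L_1([0,1),\RR^\Omega)$ to $\RR^\Omega$, since $\norm{\int_U(\sigma_x-\sigma'_x)\,dx}_1\le\norm{\sigma-\sigma'}_{L_1}$; and for a Borel measure $\gamma$ the supremum over measurable $B$ of $\int_B(\nix)\,d\gamma$ equals the supremum over continuous $\psi\colon\step_\Omega^2\to[0,1]$ of $\int\psi\cdot(\nix)\,d\gamma$, by density of continuous functions in $L_1(\gamma)$. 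Hence the functional is a supremum, over $U$, over $s\in\{\pm1\}^\Omega$ and over $\psi\in C(\step_\Omega^2,[0,1])$, of the weakly continuous maps $\gamma\mapsto\int\psi(\sigma,\tau)\bck{s,\int_U(\sigma_x-\tau_x)\,dx}\,d\gamma$, and a supremum of continuous functions is lower semicontinuous; so the infimum is attained on the compact set $\Gamma(\mu,\nu)$, and similarly for $\cutm$ once it is recast over $\Gamma(\mu,\nu)\times\Gamma(\lambda,\lambda)$. The main obstacle, which I would settle by appealing to the general results of Janson~\cite{Janson}, is precisely the equivalence between the description (\ref{eqmetric2}) using measure-preserving maps and the description over couplings $\rho\in\Gamma(\lambda,\lambda)$: one must show that enlarging $S_{[0,1)}$ to $\Gamma(\lambda,\lambda)$ does not lower the infimum, which amounts to approximating an arbitrary self-coupling of Lebesgue measure by graphs of measure-preserving maps. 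This is the technical heart of the cut-distance theory for graphons and carries over to the present setting without change.
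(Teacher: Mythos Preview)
Your proposal is correct and, in fact, goes well beyond what the paper does: the paper's ``proof'' of Fact~\ref{Fact_attained} is a bare citation to Janson~\cite{Janson}, with no details at all. You spell out the metric axioms, the gluing argument for the triangle inequality, the separation argument for $\Cutm$, and the compactness--lower-semicontinuity argument for attainment, while correctly identifying that the one delicate point (the equivalence of the $S_{[0,1)}$-formulation and the coupling formulation for $\cutm$, needed both for symmetry and for attainment) is the technical core of the graphon theory that has to be imported from~\cite{Janson}. So the approaches are the same in spirit---both rest on Janson---but you have unpacked the argument where the paper only points to the reference.
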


Let us write $M_\Omega$ for the space $\cP(\step_\Omega)$ endowed with the metric $\Cutm(\nix,\nix)$.
Moreover, call $\mu,\nu\in M_\Omega$ {\em equivalent} if $\cutm(\mu,\nu)=0$ and write $\bar\mu$ for the equivalence class of $\mu\in M_\Omega$.
Then $\cutm(\nix,\nix)$ induces a metric on the space $\cM_\Omega$ of equivalence classes.
We shall see momentarily that $\cM_\Omega$ (essentially) coincides with the usual graphon space.
But let us first look at two examples.

\begin{example}
Let $\Omega=\{0,1\}$, $p=\Be(1/2)$ and let $\mu_n=p^{\tensor n}\in\cP(\Omega^n)$ be the uniform distribution on the Hamming cube.
Letting $\sigma:[0,1)\to\cP(\Omega)$, $x\mapsto p$ be constant,
we expect that $\mu_n$ converges to $\nu=\atom_{\sigma}$ as $n\to\infty$.
Indeed, because $\nu$ is a Dirac measure there is just one coupling $\gamma$ of $\mu,\nu$.
Hence, 
the set $B$ in (\ref{eqmetric1}) really boils down to a set
$B_0\subset\Omega^n$ of configurations and the set $U\subset[0,1]$ to a weight function $u:[n]\to[0,1/n]$ such that
	\begin{align}\label{eqex1}
	\int_B\int_Up-\sigma(x)\,dx\,d\gamma(\sigma,\tau)&=p-\sum_{\sigma\in B_0}\sum_{x=1}^n u(x)\sigma(x).
	\end{align}
Since for $\SIGMA$ chosen from $\mu$ the sum $\sum_{x=1}^n u(x)\SIGMA(x)$ comprises of independent summands,
Azuma's inequality shows that for there is a constant $c>0$ such that for any $t>0$
	$\textstyle\pr\brk{\TV{p-\sum_{x=1}^n u(x)\SIGMA(x)}>t/\sqrt n}<2\exp(-ct^2).$
Therefore, the norm of (\ref{eqex1}) is $O(n^{-1/2})$ for all $B_0,U$.
\end{example}

\begin{example}
Let $\Omega=\{0,1\}$, $p=\Be(1/3)$, $q=\Be(2/3)$ and $\mu=\frac12(p^{\tensor n/2}\tensor q^{\tensor n/2}+q^{\tensor n/2}\tensor p^{\tensor n/2})$ for
an even $n>1$.
Let $\sigma:[0,1)\to\cP(\Omega)$, $x\mapsto p\vecone\{x<1/2\}+q\vecone\{x\geq1/2\}$
and $\tau:[0,1)\to\cP(\Omega)$, $x\mapsto \sigma(1-x)$.
Let $\nu=\frac12(\atom_\sigma+\atom_\tau)$.
Then 
	\begin{equation}\label{eqex2}
	\cutm(\mu,\nu)=\Cutm(\mu,\nu)=O(n^{-1/2}).
	\end{equation}
Indeed, to construct a coupling $\gamma$ of $\mu,\nu$ let $X,Y,Y'$ be three independent random variable such that $X=\Be(1/2)$,
$Y\in\{0,1\}^n$ has distribution $p^{\tensor n/2}\tensor q^{\tensor n/2}$ and $Y'\in\{0,1\}^n$
has distribution $p^{\tensor n/2}\tensor q^{\tensor n/2}$.
Further, let
	$G=(Y,\atom_\sigma)$ if $X=0$ and $G=(Y',\atom_\tau)$ otherwise
and let $\gamma$ be the law of $G$.
A similar application of Azuma's inequality as in the previous example yields (\ref{eqex2}).
\end{example}

\subsection{Alternative descriptions}

We recall that the (bipartite, decorated version of the) cut metric on the space $W_\Omega$ of measurable maps $[0,1)^2\to\cP(\Omega)$ can be defined as
	\begin{align*}
	\delta_{\Box}(f,g)&=\inf_{s,t\in S_{[0,1)}}\sup_{U,V\subset[0,1)}\TV{\int_{U\times V}f(x,y)-g(s(x),t(y))\,dx\,dy}\qquad
			\mbox{(cf.~\cite{Janson,Lovasz,LovaszSzegedy}).}
	\end{align*}
Let $\cW_\Omega$ be the space obtained from $W_\Omega$ by identifying $f,g\in W_\Omega$ such that $\delta_{\Box}(f,g)=0$.
Applying \cite[\Thm~7.1]{Janson} to our setting, we obtain

\begin{proposition}\label{Prop_homeomorphic}
There is a homeomorphism $\cM_\Omega\to\cW_\Omega$.
\end{proposition}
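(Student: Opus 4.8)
The plan is to write down two explicit maps between $\cM_\Omega$ and $\cW_\Omega$, check that they are mutually inverse at the level of equivalence classes, and then deduce continuity from the measure-theoretic results of \cite{Janson}. In one direction, to a kernel $f\in W_\Omega$ I would associate the law $\mu_f\in\cP(\step_\Omega)$ of the random function $y\mapsto f(\mathsf u,y)\in\cP(\Omega)$, where $\mathsf u$ is uniformly distributed on $[0,1)$. In the other direction, given $\mu\in\cP(\step_\Omega)$ I would use that $\step_\Omega$ is Polish, so that $\mu$ is the image of the Lebesgue measure under some measurable parametrisation $\phi_\mu:[0,1)\to\step_\Omega$, and set $f_\mu(x,y)=\phi_\mu(x)(y)$; a routine measurability argument (using separability of $L_1$ to pass from a measurable family of functions to a jointly measurable function) gives $f_\mu\in W_\Omega$. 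Two parametrisations of the same $\mu$ are intertwined by a measure-preserving rearrangement of $[0,1)$ --- here one uses that $([0,1),\lambda)$ is atomless --- so the associated kernels differ only by precomposing the first coordinate with an element of $S_{[0,1)}$ and hence have $\delta_{\Box}$-distance $0$; thus $F:\mu\mapsto f_\mu$ is well defined up to the equivalence defining $\cW_\Omega$, provided we also know (from the metric comparison below) that $\cutm(\mu,\nu)=0$ forces $\delta_{\Box}(f_\mu,f_\nu)=0$. Mutual inverseness is then essentially immediate: $y\mapsto f_\mu(\mathsf u,y)=\phi_\mu(\mathsf u)(y)$ has law $\mu$, and $x\mapsto f(x,\nix)$ is itself a parametrisation of $\mu_f$, so that the kernel recovered from $\mu_f$ is $\delta_{\Box}$-equivalent to $f$.

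The substantive step is the metric comparison: along these maps I would prove $\cutm(\mu,\nu)=\delta_{\Box}(f_\mu,f_\nu)$, which at once establishes well-definedness on the quotients and shows that the induced maps are isometries, in particular homeomorphisms. One inequality is comparatively soft: given near-optimal measure-preserving $s,t$ and measurable $U,V\subset[0,1)$ in the definition of $\delta_{\Box}(f_\mu,f_\nu)$, I would use the ``space'' rearrangement $t$ as the element of $S_{[0,1)}$ in (\ref{eqmetric2}), take the pushforward coupling $\gamma=(\phi_\mu,\phi_\nu\circ s)(\lambda)$ of $\mu$ and $\nu$ determined by the ``sample'' rearrangement $s$, and observe that every test pair $(B,U)$ in (\ref{eqmetric2}) then reduces to a pair of test sets for $\delta_{\Box}$, giving $\cutm(\mu,\nu)\le\delta_{\Box}(f_\mu,f_\nu)$. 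The reverse inequality is where the real work lies, and it is the step I expect to be the main obstacle: a coupling $\gamma\in\Gamma(\mu,\nu)$ in (\ref{eqmetric2}) is a ``soft'', two-sided matching of samples, whereas the graphon cut distance allows only honest measure-preserving rearrangements of the row space, so one must represent such a coupling by rearrangements of $[0,1)$ acting on the coordinates of a kernel. Rather than carry this out by hand, I would invoke \cite[\Thm~7.1]{Janson}, which is precisely the representation theory relating couplings of measures on standard Borel spaces to measure-preserving rearrangements; specialised to $\cP(\Omega)\subset\RR^\Omega$-decorated bipartite kernels it yields the identity above and hence the desired homeomorphism $\cM_\Omega\to\cW_\Omega$.

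Two minor points need care along the way. First, although $M_\Omega$ is defined using the strong metric $\Cutm$, the space $\cM_\Omega$ carries the weak metric $\cutm$, which is precisely the one matched to the infimum over $s,t$ built into $\delta_{\Box}$; so the strong metric plays no role in the comparison. Second, the step from $[0,1]$-valued to $\cP(\Omega)$-valued kernels costs nothing: $\cP(\Omega)$ is a compact convex subset of the finite-dimensional space $\RR^\Omega$, so all the cut-norm estimates used in \cite{Janson} go through coordinatewise, the only change being the replacement of the absolute value by the norm $\TV{\nix}$ on $\RR^\Omega$.
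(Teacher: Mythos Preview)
Your proposal is correct and follows essentially the same route as the paper: construct the map $\mu\mapsto f_\mu$ via a measurable parametrisation $\phi_\mu:[0,1)\to\step_\Omega$ with $\phi_\mu(\lambda)=\mu$, set $f_\mu(x,y)=\phi_\mu(x)(y)$, and then invoke \cite[\Thm~7.1]{Janson} to conclude that this induces a homeomorphism on the quotients. The paper's proof is considerably terser---it writes down the map and cites Janson in one sentence---whereas you spell out the inverse map, the well-definedness up to rearrangement, and the two halves of the metric comparison; but the underlying argument and the decisive external input are identical.
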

\begin{proof}
We recall that for any $\mu\in\cP(\step_\Omega)$ there exists a measurable $\varphi:[0,1)\to\step_\Omega$
such that $\mu=\varphi(\lambda)$, i.e., $\mu(A)=\lambda(\varphi^{-1}(A))$ for all measurable $A\subset\step_\Omega$.
Hence, recalling that $\varphi(x)\in L_1([0,1),\cP(\Omega))$,
	$\mu$ yields a graphon $w_\mu:[0,1]^2\to\cP(\Omega)$, $(x,y)\mapsto(\varphi(x))(y)$.
Due to~\cite[\Thm~7.1]{Janson} the map $\bar\mu\in\cM_\Omega\mapsto w_\mu\in\cW_\Omega$ is a homeomorphism.
\end{proof}

\begin{corollary}
$\cM_\Omega$ is a compact Polish space.
\end{corollary}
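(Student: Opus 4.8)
The plan is to transport the statement from the graphon side through the homeomorphism of \Prop~\ref{Prop_homeomorphic}. Being a compact metrizable space is a topological property, preserved under homeomorphism; and a compact metrizable space is automatically Polish, since any compatible metric on a compact space is complete and compact metric spaces are separable (for each $k$ cover by finitely many balls of radius $1/k$ and take the centres). Hence, by \Prop~\ref{Prop_homeomorphic}, it suffices to prove that $\cW_\Omega$ is compact and metrizable.

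That $\cW_\Omega$ is a metric space is immediate from its construction: $\delta_{\Box}(\nix,\nix)$ descends to a genuine metric on the quotient of $W_\Omega$ obtained by identifying maps at cut distance $0$. The one substantive point is compactness of $\cW_\Omega$. This is the compactness theorem for graphons in its $\cP(\Omega)$-decorated form: since $\cP(\Omega)$ is itself a compact metric space (a simplex in $\RR^\Omega$), every sequence of measurable maps $[0,1)^2\to\cP(\Omega)$ admits a subsequence converging in $\delta_{\Box}$ after suitable measure-preserving rearrangements of both coordinates. The real-valued case is due to \Lovasz\ and Szegedy \cite{LovaszSzegedy} (via the weak regularity lemma / a martingale-type argument), and the extension to maps valued in a compact metric space such as $\cP(\Omega)$ is contained in Janson's treatment \cite{Janson}; indeed it is part of the package around \cite[\Thm~7.1]{Janson}, which was already invoked in the proof of \Prop~\ref{Prop_homeomorphic}. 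Combining, $\cW_\Omega$ is a compact metric space, hence a compact Polish space, and therefore so is $\cM_\Omega$.

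The only place where genuine work is hidden is the compactness of $\cW_\Omega$, i.e.\ the decorated graphon compactness theorem; granting that (as we do, citing \cite{LovaszSzegedy,Janson}), the rest is soft point-set topology, and no further obstacle arises.
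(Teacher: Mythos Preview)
Your proposal is correct and follows essentially the same route as the paper: invoke the homeomorphism of \Prop~\ref{Prop_homeomorphic} and import compactness (hence Polishness) from the graphon side. The paper simply cites \cite[\Thm~9.23]{Lovasz} for the fact that $\cW_\Omega$ is compact Polish, whereas you spell out the soft topology (compact metrizable $\Rightarrow$ Polish) and point to \cite{LovaszSzegedy,Janson} for the decorated-graphon compactness; these are cosmetic differences, not a different argument.
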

\begin{proof}
This follows from \Prop~\ref{Prop_homeomorphic} and the fact that $\cW_\Omega$ has these properties~\cite[\Thm~9.23]{Lovasz}.
\end{proof}

Diaconis and Janson~\cite{DJ} pointed out that the connection between $\cW_\Omega$ and the Aldous-Hoover representation of ``exchangeable arrays''
	(see also Panchenko~\cite[Appendix A]{PanchenkoBook}).
To apply this observation to $\cM_\Omega$,
recall that $\Omega^{\NN\times\NN}$ is compact (by Tychonoff's theorem) and that
a sequence $(A(n))_n$ of $\Omega^{\NN\times\NN}$-valued random variables converges to $A$ in distribution iff
	$$\lim_{n\to\infty}\pr\brk{\forall i,j\leq k:A_{ij}(n)=a_{ij}}=\pr\brk{\forall i,j\leq k:A_{ij}=a_{ij}}
	\qquad\mbox{for all $k$, $a_{ij}\in\Omega$}.$$
Now, for $\bar\mu\in\cM_\Omega$ define a random array $\vec A(\bar\mu)=(\vec A_{ij}(\bar\mu))\in\Omega^{\NN\times\NN}$ as follows.
Let $(\SIGMA_i)_{i\in\NN}$ be a sequence of independent samples from the distribution $\mu$,
independent of the sequence $(\vec x_i)_{i\in\NN}$ of independent uniform samples from $[0,1)$.
Finally, independently for all $i,j$ choose $\vec A_{ij}(\bar\mu)\in\Omega$ from the distribution $\SIGMA_i(\vec x_j)\in\cP(\Omega)$.
Then in our context the correspondence from~\cite[\Thm~8.4]{DJ} reads

\begin{corollary}\label{Cor_AldousHoover}
The sequence $(\bar\mu_n)_n$ converges to $\bar\mu\in\cM_\Omega$ iff  $\vec A(\bar\mu_n)$ converges to $\vec A(\bar\mu)$ in distribution.
\end{corollary}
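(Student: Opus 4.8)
The plan is to transport the statement to the setting of decorated graphons via Proposition~\ref{Prop_homeomorphic} and then to quote the Diaconis--Janson correspondence directly. The first step is to recognise the random array $\vec A(\bar\mu)$ as a graphon-sampled array. Fix a measurable $\varphi:[0,1)\to\step_\Omega$ with $\mu=\varphi(\lambda)$, as constructed in the proof of Proposition~\ref{Prop_homeomorphic}, so that $w_\mu(x,y)=(\varphi(x))(y)$. Let $(\vec y_i)_{i\in\NN}$ be independent uniform samples from $[0,1)$, independent of the $(\vec x_j)_{j\in\NN}$ appearing in the definition of $\vec A(\bar\mu)$. Since $\varphi(\lambda)=\mu$, the sequence $(\varphi(\vec y_i))_{i\in\NN}$ is a sequence of independent samples from $\mu$, so we may couple $\SIGMA_i=\varphi(\vec y_i)$. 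Under this coupling $\SIGMA_i(\vec x_j)=(\varphi(\vec y_i))(\vec x_j)=w_\mu(\vec y_i,\vec x_j)$, whence each $\vec A_{ij}(\bar\mu)$ is, independently, a sample from $w_\mu(\vec y_i,\vec x_j)\in\cP(\Omega)$; that is, $\vec A(\bar\mu)$ is distributed exactly as the canonical $w_\mu$-random $\Omega$-valued exchangeable array. In particular the law of $\vec A(\bar\mu)$ depends only on $w_\mu\in\cW_\Omega$, and thus only on $\bar\mu$, so the statement is well posed.

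Next I would chain two equivalences. By Proposition~\ref{Prop_homeomorphic}, $\bar\mu_n\to\bar\mu$ in $\cM_\Omega$ if and only if $w_{\mu_n}\to w_\mu$ in $\cW_\Omega$, i.e.\ $\delta_{\Box}(w_{\mu_n},w_\mu)\to0$. By the correspondence between convergence in the cut metric of decorated graphons and convergence in distribution of the associated exchangeable arrays, namely \cite[\Thm~8.4]{DJ}, this holds if and only if the $w_{\mu_n}$-random arrays converge in distribution to the $w_\mu$-random array, where convergence in distribution of $\Omega^{\NN\times\NN}$-valued random arrays is the convergence of all finite-dimensional marginals recalled just before the statement (legitimate because $\Omega^{\NN\times\NN}$ is compact metrizable). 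Combining these two equivalences with the identification from the first step yields exactly the assertion; note both directions come for free, since \cite[\Thm~8.4]{DJ} and the homeomorphism of Proposition~\ref{Prop_homeomorphic} are themselves ``iff'' statements.

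The one point that needs a little care is that \cite{DJ} is phrased for $[0,1]$-valued graphons and $\{0,1\}$-valued arrays, whereas here $w_\mu$ takes values in the simplex $\cP(\Omega)\subset\RR^\Omega$ and the array takes values in the finite set $\Omega$. Because $\Omega$ is finite this is purely bookkeeping: one can encode a $\cP(\Omega)$-valued graphon by finitely many real-valued coordinate graphons, or equivalently work directly with $\Omega$-valued entries since the $\sigma$-algebra on $\Omega$ is its full power set, and then apply the scalar statement; alternatively one invokes the general theory of exchangeable arrays with values in an arbitrary Polish space. I expect this routine extension, rather than any genuinely new idea, to be the only real obstacle.
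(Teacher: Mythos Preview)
Your approach is correct and is precisely what the paper does: it states the corollary as the translation of \cite[\Thm~8.4]{DJ} via the homeomorphism of Proposition~\ref{Prop_homeomorphic}, without writing out a separate proof, and in a later remark addresses the passage from $[0,1]$-valued to $\cP(\Omega)$-valued graphons by pointing to \cite[\Sec~17.1]{Lovasz}. Your write-up simply unpacks these two citations, including the identification of $\vec A(\bar\mu)$ with the $w_\mu$-sampled array, which is exactly the intended argument.
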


While \Cor~\ref{Cor_AldousHoover} characterizes convergence in  $\cutm(\nix,\nix)$, the following statement applies to the strong metric $\Cutm(\nix,\nix)$.
For $\sigma\in\step_\Omega$ and $x_1,\ldots,x_k\in[0,1)$ define $\sigma_{\marg x_1,\ldots,x_k}=\sigma(x_1)\tensor\cdots\tensor\sigma(x_k)\in\cP(\Omega^k)$.
Moreover, for $\mu\in M_\Omega$ let
	$$\mu_{\marg x_1,\ldots,x_k}=\int_{\step_\Omega}\sigma_{\marg x_1,\ldots,x_k}\,d\mu(\sigma).$$
If $\mu\in\cP(\Omega^n)$ is a discrete measure,  then
$\hat\mu_{\marg x_1,\ldots,x_k}=\widehat{\mu_{\marg i_1,\ldots,i_k}}$ with $i_j=\lceil nx_j\rceil$.
As before, we let $(\vec x_i)_{i\geq1}$ be a sequence of independent uniform samples from $[0,1)$.

\begin{corollary}\label{Cor_sampling}
If $(\mu_n)_n\stacksign{$\Cutm$}\to\mu\in M_\Omega$, then 
for any integer $k\geq1$ we have
	$\lim_{n\to\infty}\Erw\TV{\mu_{n\marg \vec x_1,\ldots,\vec x_k}-\mu_{\marg \vec x_1,\ldots,\vec x_k}}=0.$
\end{corollary}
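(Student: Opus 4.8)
\emph{Proof proposal.} The plan is to fix, for each $n$, a coupling $\gamma_n\in\Gamma(\mu_n,\mu)$ that attains the infimum in~(\ref{eqmetric1}) (this exists by Fact~\ref{Fact_attained}) and to reduce the $k$-point statement to a one-coordinate estimate by a standard hybrid/telescoping argument. Writing $(\SIGMA,\TAU)$ for a $\gamma_n$-sample, so that $\SIGMA\sim\mu_n$ and $\TAU\sim\mu$, and letting $\vec x_1,\ldots,\vec x_k$ be independent uniform on $[0,1)$ and independent of $(\SIGMA,\TAU)$, one has $\mu_{n\marg\vec x_1,\ldots,\vec x_k}-\mu_{\marg\vec x_1,\ldots,\vec x_k}=\bck{\SIGMA(\vec x_1)\tensor\cdots\tensor\SIGMA(\vec x_k)-\TAU(\vec x_1)\tensor\cdots\tensor\TAU(\vec x_k)}_{\gamma_n}$, and the telescoping identity
$$\SIGMA(\vec x_1)\tensor\cdots\tensor\SIGMA(\vec x_k)-\TAU(\vec x_1)\tensor\cdots\tensor\TAU(\vec x_k)=\sum_{j=1}^k\TAU(\vec x_1)\tensor\cdots\tensor\TAU(\vec x_{j-1})\tensor\bc{\SIGMA(\vec x_j)-\TAU(\vec x_j)}\tensor\SIGMA(\vec x_{j+1})\tensor\cdots\tensor\SIGMA(\vec x_k)$$
reduces the task, after applying $\bck{\nix}_{\gamma_n}$, the triangle inequality, $\TV{\nix}$ and $\Erw$ over $\vec x_1,\ldots,\vec x_k$, to bounding $\Erw\TV{\nu_j}$ for each $j$, where $\nu_j$ denotes the $\gamma_n$-average of the $j$-th summand on the right.

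Next I would establish the one-coordinate estimate that drives the whole argument: for \emph{every} measurable $B\subset\step_\Omega^2$,
\begin{align}\label{eqSampleOne}
\int_0^1\TV{\int_B\sigma_x-\tau_x\,d\gamma_n(\sigma,\tau)}\,dx\leq2\abs{\Omega}\cdot\Cutm(\mu_n,\mu).
\end{align}
Here the point is that, for fixed $B$ and fixed $\omega\in\Omega$, the real-valued function $x\mapsto G(x)(\omega):=\int_B(\sigma_x(\omega)-\tau_x(\omega))\,d\gamma_n$ has the property that the integrals of its positive and negative parts are, up to sign, the $\omega$-coordinates of the vectors $\int_B\int_U\sigma_x-\tau_x\,dx\,d\gamma_n$ with $U=\{x:G(x)(\omega)\geq0\}$ and with $U$ its complement; since $\gamma_n$ is optimal, each of those vectors has $\TV{\nix}$-norm at most $\Cutm(\mu_n,\mu)$, whence $\int_0^1\abs{G(x)(\omega)}\,dx\leq2\Cutm(\mu_n,\mu)$, and summing over $\omega$ gives~(\ref{eqSampleOne}). (The Fubini swaps here and below are legitimate once one fixes jointly measurable representatives of the $\sigma\in\step_\Omega$, which is already implicit in the paper's definition of $\mu_{\marg x_1,\ldots,x_k}$.)

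Finally I would feed the product of marginals multiplying the ``good'' factor in $\nu_j$ into~(\ref{eqSampleOne}) by a layer-cake decomposition. Conditioning on $\vec x_i=x_i$ for $i\neq j$ and fixing $a_i\in\Omega$ for $i\neq j$, the coordinate $\nu_j(a_1,\ldots,a_k)$ equals $\bck{w(\SIGMA,\TAU)\bc{\SIGMA(\vec x_j)(a_j)-\TAU(\vec x_j)(a_j)}}_{\gamma_n}$ with the measurable weight $w(\sigma,\tau)=\prod_{i<j}\tau(x_i)(a_i)\prod_{i>j}\sigma(x_i)(a_i)\in[0,1]$; writing $w=\int_0^1\vecone\{w>t\}\,dt$, applying~(\ref{eqSampleOne}) with $B=\{w>t\}$ for each $t$, summing over $a_j$ and integrating over $\vec x_j$ yields $\Erw_{\vec x_j}\sum_{a_j\in\Omega}\abs{\nu_j(a_1,\ldots,a_k)}\leq2\abs{\Omega}\cdot\Cutm(\mu_n,\mu)$. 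Summing the (crude) $\abs{\Omega}^{k-1}$ terms over $(a_i)_{i\neq j}$ and integrating over the remaining $x_i$ gives $\Erw\TV{\nu_j}\leq2\abs{\Omega}^{k}\Cutm(\mu_n,\mu)$, and summing over $j$ bounds the quantity in the statement by $2k\abs{\Omega}^{k}\Cutm(\mu_n,\mu)$, which tends to $0$ by hypothesis.

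The main obstacle is the third step, and more precisely the observation that the only handle the strong cut metric gives on the ``pointwise'' marginals $\mu_{n\marg x}$ is through averages against $\{0,1\}$-valued indicators $\vecone_B$, whereas the telescoping produces a genuinely $[0,1]$-valued weight $w$ on $\step_\Omega^2$; stripping that weight off level set by level set is exactly what makes the argument close. It is equally essential to keep the $\gamma_n$-average \emph{inside} the total variation throughout: the naive bound $\Erw\TV{\nu_j}\leq\Erw_{\vec x_j}\bck{\TV{\SIGMA(\vec x_j)-\TAU(\vec x_j)}}_{\gamma_n}$ is useless, since this last quantity need not tend to $0$ even for $k=1$ (for $\mu_n$ the uniform measure on the Hamming cube of the first Example one has $\TV{\SIGMA(x)-\TAU(x)}=1$ identically). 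Everything else — the telescoping, the Fubini bookkeeping, the measurability of the evaluation maps, and the wasteful sum over $(a_i)_{i\neq j}$ — is routine, and no effort has been made to optimise the constant.
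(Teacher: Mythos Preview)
Your argument is correct and takes a genuinely different route from the paper's proof. The paper first invokes \cite[\Thm~8.6]{Janson} to realise $\mu,\mu_n$ as graphons $w,w_n:[0,1)^2\to\cP(\Omega)$, then appeals to the sampling characterisation of cut convergence \cite[\Thm~10.7]{Lovasz} (equation~(\ref{eq_samp}) in the paper), and finally uses the law of large numbers on the second graphon coordinate to pass from empirical samples to the marginals $\mu_{\marg x_1,\ldots,x_k}$. Your proof, by contrast, works directly with an optimal coupling $\gamma_n$ and never leaves the space $M_\Omega$: the telescoping reduces $k$ coordinates to one, the pointwise estimate~(\ref{eqSampleOne}) follows from a single split $U=\{G(\nix)(\omega)\geq0\}$ versus its complement, and the layer-cake trick converts the $[0,1]$-valued weight $w$ into an average of the indicators $\vecone_{B_t}$ that the cut metric controls.

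The trade-offs are as follows. The paper's route is shorter to write because the hard work is outsourced to \cite{Janson,Lovasz}, and it makes explicit the equivalence with graphon sampling, which is conceptually useful elsewhere. Your route is self-contained, avoids those external references entirely, and delivers the explicit quantitative bound $\Erw\TV{\mu_{n\marg\vec x_1,\ldots,\vec x_k}-\mu_{\marg\vec x_1,\ldots,\vec x_k}}\leq 2k\abs{\Omega}^{k}\Cutm(\mu_n,\mu)$, which the paper's argument does not directly provide. Your closing remarks about keeping the $\gamma_n$-average inside the total variation and about the necessity of the layer-cake step are exactly on point; the measurability caveat you flag is real but, as you note, is already built into the paper's conventions on representatives.
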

\begin{proof}
By \cite[\Thm~8.6]{Janson}  we can turn $\mu,\mu_n$ into graphons $w,w_n:[0,1)^2\to\cP(\Omega)$ such that for all $n$
	\begin{align*}
	\mu&=\int_0^1 \atom_{w(\nix,y)}\,dy,\quad\mu_n=\int_0^1 \atom_{w_n(\nix,y)}\,dy\quad\mbox{and}\quad
		\Cutm(\mu,\mu_n)=\sup_{U,V\subset[0,1)}\TV{\int_{U\times V}w(x,y)-w_n(x,y) \,dx\,dy}.
	\end{align*}
Let $(\vec y_j)_{j\geq 1}$ be independent and uniform on $[0,1)$ and independent of $(\vec x_i)_{i\geq1}$.
By \cite[\Thm~10.7]{Lovasz}, we have $\lim_{n\to\infty}\Cutm(\mu_n,\mu)=0$ iff
	\begin{equation}\label{eq_samp}
	\lim_{r\to\infty}\limsup_{n\to\infty}\ 
		\Erw\brk{\max_{I,J\subset[r]}\TV{\sum_{(i,j)\in I\times J}
		w(\vec x_i,\vec y_j)-w_n(\vec x_i,\vec y_j)
			}}=0.
	\end{equation}
Hence, we are left to show that (\ref{eq_samp}) implies
\begin{equation}\label{eq_marg}
\forall k\geq1: \lim_{n\to\infty}\Erw\TV{\mu_{n\marg \vec x_1,\ldots,\vec x_k}-\mu_{\marg \vec x_1,\ldots,\vec x_k}} = 0.
\end{equation}
To this end, we note that by the strong law of large numbers uniformly for all $x_1,\ldots, x_k\in[0,1]$
and $n$,
\begin{align}\label{eq_LLN1}
 \frac 1r \sum_{j=1}^r (w(x_1,\vec y_j),\ldots,w(x_k,\vec y_j))&\ \stacksign{$r\to\infty$}\to\ \mu_{\marg x_1,\ldots, x_k}&\mbox{in probability},\\
  \frac 1r \sum_{j=1}^r (w_n(x_1,\vec y_j),\ldots,w_n(x_k,\vec y_j))&\ \stacksign{$r\to\infty$}\to\ \mu_{n\marg x_1,\ldots, x_k}&\mbox{in probability}.
  	\label{eq_LLN2}
\end{align}
Hence, if \eqref{eq_samp} holds, then (\ref{eq_marg}) follows from (\ref{eq_LLN1})--(\ref{eq_LLN2}).
\end{proof}

\noindent
As an application of \Cor~\ref{Cor_sampling} we obtain

\begin{corollary}\label{Cor_factorise}
Assume that $(\mu_n)_n$ is a sequence such that $\mu_n\stacksign{$\Cutm$}\to\mu\in M_\Omega$.
The following statements are equivalent.
\begin{enumerate}[(i)]
\item There is $\sigma\in\Sigma_\Omega$ such that $\mu=\atom_\sigma$.
\item For any integer $k\geq2$ we have
	\begin{equation}\label{eqFactorise}
	\lim_{n\to\infty}\Erw\TV{\mu_{n\marg\vec x_1,\ldots,\vec x_k}-\mu_{n\marg\vec x_1}\tensor\cdots\tensor\mu_{n\marg\vec x_k}}=0.
	\end{equation}
\item The condition (\ref{eqFactorise}) holds for $k=2$.	
\end{enumerate}
\end{corollary}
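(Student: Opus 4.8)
The plan is to establish the cycle (i)$\Rightarrow$(ii)$\Rightarrow$(iii)$\Rightarrow$(i). Since (ii) specialises to (iii) by taking $k=2$, that arrow is free, and all the work lies in (i)$\Rightarrow$(ii) and, above all, in (iii)$\Rightarrow$(i). Throughout I will write $\SIGMA$ for a sample from $\mu$ and let $\bar\sigma\in\step_\Omega$ denote the function $x\mapsto\mu_{\marg x}$, i.e.\ $\bar\sigma=\bck\SIGMA_\mu$; this indeed lies in $\step_\Omega$ since each $\bar\sigma(x)$ is an average of points of $\cP(\Omega)$. The one elementary ingredient I will reuse is the telescoping bound $\TV{\bigotimes_{j\le k}p_j-\bigotimes_{j\le k}q_j}\le\sum_{j\le k}\TV{p_j-q_j}$ for product probability measures on a finite set.

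For (i)$\Rightarrow$(ii) I would start from $\mu=\atom_\sigma$, which gives $\mu_{\marg x_1,\ldots,x_k}=\sigma(x_1)\tensor\cdots\tensor\sigma(x_k)=\mu_{\marg x_1}\tensor\cdots\tensor\mu_{\marg x_k}$ for all points $x_1,\ldots,x_k$ straight from the definitions. Then \Cor~\ref{Cor_sampling} applied with the given $k$ yields $\Erw\TV{\mu_{n\marg\vec x_1,\ldots,\vec x_k}-\mu_{\marg\vec x_1,\ldots,\vec x_k}}\to0$, while applied with $k=1$ (and using that the $\vec x_j$ are identically distributed) it yields $\sum_{j\le k}\Erw\TV{\mu_{n\marg\vec x_j}-\mu_{\marg\vec x_j}}\to0$. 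Passing the latter through the telescoping bound gives $\Erw\TV{\bigotimes_{j\le k}\mu_{n\marg\vec x_j}-\bigotimes_{j\le k}\mu_{\marg\vec x_j}}\to0$, and since $\bigotimes_{j\le k}\mu_{\marg\vec x_j}=\mu_{\marg\vec x_1,\ldots,\vec x_k}$ a triangle inequality delivers (\ref{eqFactorise}).

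For (iii)$\Rightarrow$(i) I would first transport (\ref{eqFactorise}) at $k=2$ to the limit: writing $\TV{\mu_{\marg\vec x_1,\vec x_2}-\mu_{\marg\vec x_1}\tensor\mu_{\marg\vec x_2}}$ as a telescope through $\mu_{n\marg\vec x_1,\vec x_2}$ and $\mu_{n\marg\vec x_1}\tensor\mu_{n\marg\vec x_2}$, the three resulting error terms go to $0$ in expectation --- the outer two by \Cor~\ref{Cor_sampling} (at $k=2$ and, via the telescoping bound, at $k=1$), the middle one by (iii) --- and since the left-hand side is independent of $n$ this forces $\Erw\TV{\mu_{\marg\vec x_1,\vec x_2}-\mu_{\marg\vec x_1}\tensor\mu_{\marg\vec x_2}}=0$. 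Equivalently,
\[
 \Erw[\SIGMA(x_1)_a\,\SIGMA(x_2)_b]=\bar\sigma(x_1)_a\,\bar\sigma(x_2)_b
\]
for all $a,b\in\Omega$ and Lebesgue-almost every $(x_1,x_2)\in[0,1)^2$. The remaining, and crucial, step is to upgrade this ``off-diagonal'' decorrelation to $\SIGMA=\bar\sigma$ $\mu$-almost surely. Here I would fix $a\in\Omega$, observe that the random function $x\mapsto\SIGMA(x)_a$ is bounded hence lies in $L_2([0,1))$, fix an orthonormal basis $(e_m)_{m\ge0}$ of $L_2([0,1))$, and compute, for each $m$, using Fubini and the displayed identity (the diagonal $\{x_1=x_2\}$ being Lebesgue-null),
\[
 \Erw[\bck{\SIGMA(\nix)_a,e_m}^2]=\bck{\bar\sigma(\nix)_a,e_m}^2=\bc{\Erw\bck{\SIGMA(\nix)_a,e_m}}^2,
\]
so $\Var\bck{\SIGMA(\nix)_a,e_m}=0$. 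Summing over $m$ (Parseval, then Tonelli on nonnegative terms) gives $\Erw\norm{\SIGMA(\nix)_a-\bar\sigma(\nix)_a}_{L_2}^2=\sum_m\Var\bck{\SIGMA(\nix)_a,e_m}=0$, so $\SIGMA(\nix)_a=\bar\sigma(\nix)_a$ almost everywhere on $[0,1)$, $\mu$-almost surely. Taking the union of the exceptional $\mu$-null sets over the finitely many $a\in\Omega$ yields $\SIGMA=\bar\sigma$ in $\step_\Omega$ $\mu$-almost surely, i.e.\ $\mu=\atom_{\bar\sigma}$, which is (i).

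I expect the invocations of \Cor~\ref{Cor_sampling} and the telescoping bound to be routine, and the only genuine obstacle to be the final step of (iii)$\Rightarrow$(i). The difficulty is that factorization of the two-point marginals constrains $\SIGMA$ only at pairs of \emph{distinct} coordinates and therefore says nothing about $\Var(\SIGMA(x)_a)$ directly; the way around it, as above, is to test $\SIGMA(\nix)_a$ against a countable family spanning $L_2([0,1))$ (equivalently, to integrate over all rational subintervals of $[0,1)$), so that decorrelation forces each such linear functional of $\SIGMA$ to be $\mu$-almost surely deterministic and completeness of the family pins $\SIGMA$ down. A small loose end worth checking along the way is that $\SIGMA\mapsto\bck{\SIGMA(\nix)_a,e_m}$ is genuinely measurable, which is immediate since it is a bounded linear functional of the $L_1$-valued variable $\SIGMA$.
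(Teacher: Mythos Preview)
Your proof is correct and follows the same overall route as the paper's: (i)$\Rightarrow$(ii) via \Cor~\ref{Cor_sampling} and the telescoping bound, the transport of (iii) to the limit $\mu$ by the same device, and then the deduction that $\mu$ is a Dirac from the two--point factorisation of the limit. The only difference is in how that last deduction is carried out. The paper argues by contradiction through the cut metric: if $\mu\neq\atom_{\bar\sigma}$, then Fact~\ref{Fact_attained} yields $B\subset\step_\Omega$, $U\subset[0,1)$, $\omega\in\Omega$ with $\int_B\bc{\int_U\sigma_x(\omega)-\bar\sigma_x(\omega)\,dx}^2d\mu(\sigma)>0$, but expanding the square and using the factorisation identity forces $\int_{\step_\Omega}\bc{\int_U\sigma_x(\omega)-\bar\sigma_x(\omega)\,dx}^2d\mu(\sigma)=0$. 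You instead test $\SIGMA(\nix)_a$ against an $L_2$ orthonormal basis and sum the resulting variances by Parseval. Both arguments boil down to the same variance computation (indicators $\vecone_U$ versus basis functions $e_m$ as test functions); your version is a touch more self--contained since it avoids invoking the cut metric, while the paper's choice stays closer to the ambient framework.
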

\begin{proof}
The implication (i)$\Rightarrow$(ii) follows from \Cor~\ref{Cor_sampling} and the step from (ii) to (iii) is immediate.
Hence, assume that (iii) holds.
Then by \Cor~\ref{Cor_sampling} and the continuity of the $\tensor$-operator,
	\begin{align}\label{eqFactorise0}
	\Erw\TV{\mu_{\marg\vec x_1,\vec x_2}-\mu_{\marg\vec x_1}\tensor\mu_{\marg\vec x_2}}&=
		\lim_{n\to\infty}\Erw\TV{\mu_{n\marg\vec x_1,\vec x_2}-\mu_{n\marg\vec x_1}\tensor\mu_{n\marg\vec x_2}}=0.
	\end{align}
Define $\tilde\sigma:[0,1)\to\cP(\Omega)$ by $x\mapsto\mu_{\marg x}$ and assume that $\mu\neq\atom_{\tilde\sigma}$.
Then $\Cutm(\mu,\atom_{\tilde\sigma})>0$ (by Fact~\ref{Fact_attained}), whence there exist $B\subset\step_\Omega$, $U\subset[0,1)$, $\omega\in\Omega$ such that
	\begin{align}\label{eqFactorise1}
	\int_B\brk{\int_U\sigma_x(\omega)-\tilde\sigma_x(\omega)\,dx}^2\,d\mu(\sigma)>0.
	\end{align}
However, (\ref{eqFactorise0}) entails
	\begin{align*}
	\int_{\step_\Omega}\brk{\int_U\sigma_x(\omega)-\tilde\sigma_x(\omega)\,dx}^2\,d\mu(\sigma)
		&=\int_{\step_\Omega}\int_U\int_U\sigma_x(\omega)\sigma_y(\omega)-\tilde\sigma_x(\omega)\tilde\sigma_y(\omega)\,dx\,dy\,d\mu(\sigma)\\
		&=\Erw[\mu_{\marg\vec x_1,\vec x_2}-\mu_{\marg\vec x_1}\tensor\mu_{\marg\vec x_2}|\vec x_1,\vec x_2\in U]=0,
	\end{align*}
in contradiction to (\ref{eqFactorise1}).
\end{proof}

\begin{remark}
Strictly speaking, the results from~\cite{DJ,Lovasz} are stated for graphons with values in $[0,1]$, i.e.,  $\cP(\Omega)$ for $|\Omega|=2$.
However, they extend to $|\Omega|>2$ directly.
For instance, the compactness proof~\cite[\Chap~9]{Lovasz} is by way of the regularity lemma, which we extend in \Sec~\ref{Sec_reg} explicitly.
Moreover, the sampling result for \Cor~\ref{Cor_sampling} follows from~\cite[\Chap~10]{Lovasz} by viewing $w:[0,1)^2\to\cP(\Omega)$
as a family $(w_\omega)_{\omega\in\Omega}$, $w_\omega:(x,y)\mapsto w_{x,y}(\omega)\in[0,1]$.
Finally, the proof of \Cor~\ref{Cor_AldousHoover} in~\cite{DJ} by counting homomorphisms, 
extends to $\cP(\Omega)$-valued graphons~\cite[\Sec~17.1]{Lovasz}.
\end{remark}

\subsection{Algebraic properties}
The cut metric is compatible with basic algebraic operations on measures.
The following is immediate.

\begin{fact}
If $\mu_n\stacksign{$\Cutm$}\to\mu$, $\nu_n\stacksign{$\Cutm$}\to\nu$,
then $\alpha\mu_n+(1-\alpha)\nu_n\stacksign{$\Cutm$}\to\alpha\mu+(1-\alpha)\nu$ for any $\alpha\in(0,1)$.
\end{fact}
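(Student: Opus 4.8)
The plan is to exploit the linearity of the defining expression of $\Cutm$ in the coupling, combined with subadditivity of the supremum. First I would observe that if $\gamma\in\Gamma(\mu_n,\mu)$ and $\delta\in\Gamma(\nu_n,\nu)$ are couplings, then $\rho=\alpha\gamma+(1-\alpha)\delta$ is a probability measure on $\step_\Omega^2$ whose image under $(\sigma,\tau)\mapsto\sigma$ is $\alpha\mu_n+(1-\alpha)\nu_n$ and whose image under $(\sigma,\tau)\mapsto\tau$ is $\alpha\mu+(1-\alpha)\nu$; this is an immediate marginalization computation. Hence $\rho\in\Gamma(\alpha\mu_n+(1-\alpha)\nu_n,\alpha\mu+(1-\alpha)\nu)$, so $\rho$ is an admissible coupling in the infimum defining $\Cutm(\alpha\mu_n+(1-\alpha)\nu_n,\alpha\mu+(1-\alpha)\nu)$.

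Next, for any measurable $B\subset\step_\Omega^2$ and $U\subset[0,1)$ the integral splits as
$$\int_{B}\int_U \sigma_x-\tau_x\, dx\, d\rho(\sigma,\tau)=\alpha\int_{B}\int_U \sigma_x-\tau_x\, dx\, d\gamma(\sigma,\tau)+(1-\alpha)\int_{B}\int_U \sigma_x-\tau_x\, dx\, d\delta(\sigma,\tau),$$
so by the triangle inequality for $\TV{\nix}$ and the elementary bound $\sup(f+g)\le\sup f+\sup g$,
$$\sup_{B,U}\TV{\int_{B}\int_U \sigma_x-\tau_x\, dx\, d\rho}\le\alpha\sup_{B,U}\TV{\int_{B}\int_U \sigma_x-\tau_x\, dx\, d\gamma}+(1-\alpha)\sup_{B,U}\TV{\int_{B}\int_U \sigma_x-\tau_x\, dx\, d\delta}.$$
Since the left-hand side bounds $\Cutm(\alpha\mu_n+(1-\alpha)\nu_n,\alpha\mu+(1-\alpha)\nu)$ from above (as $\rho$ is one admissible coupling), and since $\gamma$ and $\delta$ range independently, taking the infimum over $\gamma\in\Gamma(\mu_n,\mu)$ and over $\delta\in\Gamma(\nu_n,\nu)$ on the right yields
$$\Cutm\bc{\alpha\mu_n+(1-\alpha)\nu_n,\ \alpha\mu+(1-\alpha)\nu}\le\alpha\,\Cutm(\mu_n,\mu)+(1-\alpha)\,\Cutm(\nu_n,\nu).$$
Letting $n\to\infty$ and using $\mu_n\stacksign{$\Cutm$}\to\mu$ and $\nu_n\stacksign{$\Cutm$}\to\nu$, the right-hand side tends to $0$, which proves the claim. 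If one prefers, Fact~\ref{Fact_attained} lets us work with optimal couplings $\gamma,\delta$ from the start instead of passing to infima, but this is not needed.

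There is essentially no obstacle here; the only two points requiring a word are (a) that a convex combination of couplings couples the corresponding convex combinations of the marginals, and (b) subadditivity of the supremum over $B,U$ — both entirely routine. I would remark in passing that the same reasoning gives the analogous inequality for the weak cut metric $\cutm$: one first bounds each term by its strong counterpart with a fixed reparametrization applied to both summands, and then optimizes; in particular the convergence statement remains valid for $\bar\mu_n\to\bar\mu$ in $\cM_\Omega$.
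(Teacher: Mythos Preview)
Your argument is correct; the paper gives no proof at all, simply declaring the fact ``immediate,'' and what you wrote is precisely the routine verification one has in mind. The one-line summary is your inequality $\Cutm(\alpha\mu_n+(1-\alpha)\nu_n,\alpha\mu+(1-\alpha)\nu)\le\alpha\Cutm(\mu_n,\mu)+(1-\alpha)\Cutm(\nu_n,\nu)$, obtained via the convex combination of couplings.
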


The construction of a ``product measure'' is slightly more interesting.
Let $\Omega,\Omega'$ be finite sets.
For $\sigma\in\step_\Omega,\tau\in\step_{\Omega'}$ we define $\sigma\times\tau\in\step_{\Omega\times\Omega'}$ 
by letting $\sigma\times\tau(x)=\sigma(x)\tensor\tau(x)$,
where $\sigma(x)\tensor\tau(x)\in\cP(\Omega\times\Omega')$ is the usual product measure of $\sigma(x),\tau(x)$.
Further, for $\mu\in M_\Omega,\nu\in M_{\Omega'}$ we define $\mu\times\nu\in M_{\Omega\times\Omega'}$ by
	\begin{align*}
	\mu\times\nu&=\int_{\step_{\Omega}\times\step_{\Omega'}}\atom_{\sigma\times\tau}\,d\mu\tensor\nu(\sigma,\tau).
	\end{align*}
Clearly, $\mu\times\nu$ is quite different from the usual product measure $\mu\tensor\nu$.
However, for {\em discrete} measures we observe the following.

\begin{fact}
For $\mu\in\cP(\Omega^n)$ and $\nu\in\cP({\Omega'}^n)$ we have
	$\hat\mu\times\hat\nu=\widehat{\mu\tensor\nu}$.
\end{fact}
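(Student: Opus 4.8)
The plan is to show that both sides are literally the same finitely supported probability measure on $\step_{\Omega\times\Omega'}$, by unwinding the two definitions; there is no analytic content, only bookkeeping.

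\emph{Step 1: the left-hand side.} Since $\mu,\nu$ are discrete, $\hat\mu=\sum_{\sigma\in\Omega^n}\mu(\sigma)\atom_{\hat\sigma}$ and $\hat\nu=\sum_{\tau\in{\Omega'}^n}\nu(\tau)\atom_{\hat\tau}$ are atomic, so $\hat\mu\tensor\hat\nu$ is supported on the finite set $\{(\hat\sigma,\hat\tau):\sigma\in\Omega^n,\ \tau\in{\Omega'}^n\}$ with mass $\mu(\sigma)\nu(\tau)$ at $(\hat\sigma,\hat\tau)$. Plugging this into the definition of $\times$ on $M_\Omega\times M_{\Omega'}$ gives
$$\hat\mu\times\hat\nu=\int_{\step_\Omega\times\step_{\Omega'}}\atom_{s\times t}\,d(\hat\mu\tensor\hat\nu)(s,t)=\sum_{\sigma\in\Omega^n,\ \tau\in{\Omega'}^n}\mu(\sigma)\,\nu(\tau)\,\atom_{\hat\sigma\times\hat\tau}.$$

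\emph{Step 2: the right-hand side.} Identify $\Omega^n\times{\Omega'}^n$ with $(\Omega\times\Omega')^n$ coordinatewise, sending $(\sigma,\tau)$ to the tuple $\rho=\rho(\sigma,\tau)$ whose $i$-th entry is $(\sigma_i,\tau_i)$; under this bijection $\mu\tensor\nu$ has mass $\mu(\sigma)\nu(\tau)$ at $\rho(\sigma,\tau)$. By the definition of the hat operation applied in $\Omega\times\Omega'$,
$$\widehat{\mu\tensor\nu}=\sum_{\sigma\in\Omega^n,\ \tau\in{\Omega'}^n}\mu(\sigma)\,\nu(\tau)\,\atom_{\widehat{\rho(\sigma,\tau)}},$$
where $\widehat{\rho(\sigma,\tau)}\in\step_{\Omega\times\Omega'}$ is the step function with value $\atom_{(\sigma_i,\tau_i)}$ on $[(i-1)/n,i/n)$.

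\emph{Step 3: matching the atoms.} It remains to check $\hat\sigma\times\hat\tau=\widehat{\rho(\sigma,\tau)}$ in $\step_{\Omega\times\Omega'}$ for all $\sigma,\tau$. For $x\in[(i-1)/n,i/n)$, by the definitions of $\times$ on step functions and of $\hat\sigma,\hat\tau$,
$$(\hat\sigma\times\hat\tau)(x)=\hat\sigma(x)\tensor\hat\tau(x)=\atom_{\sigma_i}\tensor\atom_{\tau_i}=\atom_{(\sigma_i,\tau_i)}=\widehat{\rho(\sigma,\tau)}(x),$$
using the elementary identity $\atom_\omega\tensor\atom_{\omega'}=\atom_{(\omega,\omega')}$ in $\cP(\Omega\times\Omega')$. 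As $i$ ranges over $[n]$ the two functions agree on all of $[0,1)$, hence define the same element of $\step_{\Omega\times\Omega'}$, so $\atom_{\hat\sigma\times\hat\tau}=\atom_{\widehat{\rho(\sigma,\tau)}}$. Substituting this into the sum from Step 1 produces exactly the sum from Step 2, which is the claim. The only point that needs a moment's care is confirming that the bijection $\Omega^n\times{\Omega'}^n\cong(\Omega\times\Omega')^n$ is indeed coordinatewise, so that the $i$-th block $[(i-1)/n,i/n)$ of $\hat\sigma\times\hat\tau$ matches the $i$-th coordinate of $\rho(\sigma,\tau)$ with no hidden reshuffling of coordinates.
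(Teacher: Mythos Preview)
Your proof is correct. The paper states this result as a ``Fact'' without supplying any proof, so there is nothing to compare against; your direct verification by unwinding the definitions of $\hat{\,\cdot\,}$ and $\times$ and checking $\hat\sigma\times\hat\tau=\widehat{\rho(\sigma,\tau)}$ pointwise on each interval $[(i-1)/n,i/n)$ is exactly the intended (and essentially the only) argument.
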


\begin{proposition}
If $\mu_n\stacksign{$\Cutm$}\to\mu\in M_\Omega$, $\nu_n\stacksign{$\Cutm$}\to\nu\in M_{\Omega'}$,
then $\mu_n\times\nu_n\stacksign{$\Cutm$}\to\mu\times\nu$.
\end{proposition}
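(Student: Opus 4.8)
The plan is to establish a \emph{quantitative} estimate for one factor at a time and then invoke the triangle inequality. Concretely, I would first prove that for all $\mu_1,\mu_2\in M_\Omega$ and every $\nu\in M_{\Omega'}$,
$$\Cutm(\mu_1\times\nu,\mu_2\times\nu)\ \le\ |\Omega|\,|\Omega'|\cdot\Cutm(\mu_1,\mu_2),$$
together with the symmetric statement in which the roles of the two factors are exchanged. Granting this, since $\Cutm$ obeys the triangle inequality (Fact~\ref{Fact_attained}) one gets
$$\Cutm(\mu_n\times\nu_n,\mu\times\nu)\le\Cutm(\mu_n\times\nu_n,\mu\times\nu_n)+\Cutm(\mu\times\nu_n,\mu\times\nu)\le|\Omega|\,|\Omega'|\bc{\Cutm(\mu_n,\mu)+\Cutm(\nu_n,\nu)},$$
which tends to $0$ by assumption.

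To prove the displayed inequality I would fix a coupling $\gamma\in\Gamma(\mu_1,\mu_2)$ attaining $\Cutm(\mu_1,\mu_2)$ (Fact~\ref{Fact_attained}), draw $\tau$ from $\nu$ independently of $(\sigma_1,\sigma_2)\sim\gamma$, and let $\hat\gamma$ be the law of $(\sigma_1\times\tau,\sigma_2\times\tau)$; a check of marginals shows $\hat\gamma\in\Gamma(\mu_1\times\nu,\mu_2\times\nu)$. Fix measurable $B\subset\step_{\Omega\times\Omega'}^2$ and $U\subset[0,1)$, write $\Phi(\sigma_1,\sigma_2,\tau)=(\sigma_1\times\tau,\sigma_2\times\tau)$ and $\tilde B=\Phi^{-1}(B)$. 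Since $(\sigma\times\tau)(x)=\sigma(x)\tensor\tau(x)$, the change-of-variables formula for the pushforward $\hat\gamma=\Phi_*(\gamma\tensor\nu)$ shows that the $(\omega,\omega')$-coordinate of $\int_B\int_U(\xi_x-\eta_x)\,dx\,d\hat\gamma(\xi,\eta)$ equals
$$\int_{\tilde B}\int_U\bc{\sigma_1(x)(\omega)-\sigma_2(x)(\omega)}\,\tau(x)(\omega')\,dx\,d(\gamma\tensor\nu).$$

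The crux is to disentangle the scalar weight $\tau(x)(\omega')\in[0,1]$ from the set $\tilde B$. I would do this via the layer-cake identity $\tau(x)(\omega')=\int_0^1\vecone\{\tau(x)(\omega')>t\}\,dt$ followed by Fubini, turning the last display into
$$\int_0^1\int_{\step_{\Omega'}}\bc{\int_{\step_\Omega^2}\vecone_{\tilde B_\tau}(\sigma_1,\sigma_2)\int_{U\cap U_{t,\tau,\omega'}}\bc{\sigma_1(x)(\omega)-\sigma_2(x)(\omega)}\,dx\,d\gamma}\,d\nu(\tau)\,dt,$$
where $\tilde B_\tau=\{(\sigma_1,\sigma_2):(\sigma_1,\sigma_2,\tau)\in\tilde B\}$ and $U_{t,\tau,\omega'}=\{x\in[0,1):\tau(x)(\omega')>t\}$. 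For each fixed $t$ and a.e.\ fixed $\tau$ the inner double integral is exactly the $\omega$-coordinate of $\int_{B''}\int_{U''}(\sigma_1(x)-\sigma_2(x))\,dx\,d\gamma$ with the \emph{fixed} measurable sets $B''=\tilde B_\tau\subset\step_\Omega^2$ and $U''=U\cap U_{t,\tau,\omega'}\subset[0,1)$, so its absolute value is at most $\TV{\int_{B''}\int_{U''}(\sigma_1(x)-\sigma_2(x))\,dx\,d\gamma}\le\Cutm(\mu_1,\mu_2)$ because $\gamma$ is optimal. Pulling the absolute value through the probability integrals $\int_0^1\!\int_{\step_{\Omega'}}$ then bounds each coordinate by $\Cutm(\mu_1,\mu_2)$, and summing over the $|\Omega|\,|\Omega'|$ coordinates gives $\TV{\int_B\int_U(\xi_x-\eta_x)\,dx\,d\hat\gamma}\le|\Omega|\,|\Omega'|\Cutm(\mu_1,\mu_2)$; taking the supremum over $B,U$ yields the claim.

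The main obstacle is precisely this disentangling step: a priori the test set $B$ correlates the $\Omega$- and $\Omega'$-coordinates in an arbitrary measurable way, so one cannot naively split $\sigma(x)\tensor\tau(x)$; the layer-cake representation together with Fubini reduces the estimate, slice by slice in $t$ and $\tau$, to an honest cut-norm test for the pair $\mu_1,\mu_2$ against the optimal coupling $\gamma$. The remaining points — measurability of $\Phi$, of $\tilde B$ and of its $\tau$-sections, and the legitimacy of the Fubini swaps (all integrands bounded, all underlying spaces Polish probability spaces) — are routine and I would dispatch them briefly.
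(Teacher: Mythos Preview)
Your argument is correct. It differs from the paper's route in two respects, and the comparison is instructive.

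The paper couples \emph{both} factors simultaneously: it takes optimal couplings $\gamma_n\in\Gamma(\mu_n,\mu)$ and $\gamma_n'\in\Gamma(\nu_n,\nu)$, pushes $\gamma_n\tensor\gamma_n'$ forward by $(\sigma,\tau,\sigma',\tau')\mapsto(\sigma\times\sigma',\tau\times\tau')$, and invokes the pointwise bound $\TV{p\tensor p'-q\tensor q'}\le\TV{p-q}+\TV{p'-q'}$ to claim the cut integral is below $2\eps$. As written, however, the paper only checks test sets of product form $B\times B'\subset\step_\Omega^2\times\step_{\Omega'}^2$, whereas the definition of $\Cutm$ requires \emph{all} measurable $B\subset\step_{\Omega\times\Omega'}^2$; and the tensor--TV inequality, applied pointwise under the integral, does not by itself yield a cut-norm bound (it controls an $L_1$-type quantity, not the bilinear cut functional). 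So the paper's presentation is at best sketchy on exactly the point you identify as the crux.

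Your approach sidesteps this by changing one factor at a time via the triangle inequality, which makes the difference $(\sigma_1\times\tau)(x)-(\sigma_2\times\tau)(x)=(\sigma_1(x)-\sigma_2(x))\tensor\tau(x)$ genuinely rank-one in the product coordinates. The layer-cake identity for the scalar weight $\tau(x)(\omega')$ then reduces each slice to a bona fide cut test for $(\mu_1,\mu_2)$ against the optimal $\gamma$, with an honest measurable pair $(B'',U'')$. This is the right mechanism, and your bookkeeping (pushforward, sections $\tilde B_\tau$, Fubini on bounded integrands over Polish probability spaces) is clean. The price is the constant $|\Omega|\,|\Omega'|$ in place of the paper's claimed $2$, but for a continuity statement this is immaterial.
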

\begin{proof}
Let $\eps>0$ and choose $n_0$ large enough so that
$\Cutm(\mu_n,\mu)<\eps$ and $\Cutm(\nu_n,\nu)<\eps$ for all $n>n_0$.
By Fact~\ref{Fact_attained} there exist couplings
$\gamma_n,\gamma_n'$ of $\mu_n,\mu$ and $\nu_n,\nu$ such that  (\ref{eqmetric1}) is attained.
Because  $\TV{p\tensor p'-q\tensor q'}\leq\TV{p-q}+\TV{q-q'}$ for any $p,q\in\cP(\Omega)$, $p',q'\in\cP(\Omega)$,
we obtain for any $U\subset[0,1)$, $B\subset M_\Omega$, $B'\subset M_\Omega$
	\begin{align*}
	\TV{\int_{B\times B'}\int_U\sigma\times\sigma'(x)-\tau\times\tau'(x)\,dx\,d\gamma_n\tensor\gamma_n'(\sigma,\tau,\sigma',\tau')}<2\eps,
	\end{align*}
as desired.
\end{proof}

\subsection{Regularity}\label{Sec_reg}
For $\sigma\in\Sigma_\Omega$ and $U\subset[0,1)$ measurable we write
	$$\sigma[\omega|U]=\int_U\sigma_x(\omega)\,dx.$$
Moreover, for $\mu\in M_\Omega$ and a measurable $S\subset\step_\Omega$ with $\mu(S)>0$ we let $\mu[\nix|S]\in M_\Omega$
be the conditional distribution.
Further, let $\vV=(V_1,\ldots,V_K)$ be a partition of $[0,1)$ into a finite number of pairwise disjoint measurable sets.
Similarly, let $\vS=(S_1,\ldots,S_L)$ be a partition of $\step_\Omega$ into pairwise disjoint measurable sets.
We write $\#\vV,\#\vS$ for the number $K,L$ of classes, respectively.
A measure $\mu\in M_\Omega$ is {\em $\eps$-regular} with respect to $(\vV,\vS)$
if there exists $R\subset[\#\vV]\times[\#\vS]$ such that the following conditions hold.
\begin{description}
\item[REG1] $\lambda(V_i)>0$ and $\mu(S_j)>0$ for all $(i,j)\in R$.
\item[REG2] $\sum_{(i,j)\in R}\lambda(V_i)\mu(S_j)>1-\eps$.
\item[REG3] for all $(i,j)\in R$ and all $\sigma,\sigma'\in S_j$ we have
	$\TV{\sigma[\nix|V_i]-\sigma'[\nix|V_i]}<\eps$.
\item[REG4] if $(i,j)\in R$, then for every $U\subset V_i$ with $\lambda(U)\geq\eps\lambda(V_i)$
	and every $T\subset S_j$ with $\mu(T)\geq\eps\mu(S_j)$ we have
		$$\TV{\bck{\SIGMA[\nix|U]}_{\mu[\nix|T]}-\bck{\SIGMA[\nix|V_i]}_{\mu[\nix|S_j]}}<\eps.$$
\end{description}

Thus, $R$ is a set of index pairs $(i,j)$ of ``good squares'' $V_i\times S_j$.
{\bf REG1} provides that  every good square has positive measure and {\bf REG2} that the total probability mass of good squares is at least $1-\eps$.
Further, by {\bf REG3} the averages $\sigma[\nix|V_i],\sigma'[\nix|V_i]\in\cP(\Omega)$ over $V_i$ of any two $\sigma,\sigma'\in S_j$ are close.
Finally, and most importantly, {\bf REG4} requires that the average $\bck{\SIGMA[\nix|U]}_{\mu[\nix|T]}$ over a ``biggish'' sub-square $U\times T$
is close to the mean over the entire square $V_i\times S_j$.

A {\em refinement} of a partition $(\vV,\vS)$ is a partition $(\vV',\vS')$ such that
for every pair $(i',j')\in[\#\vV']\times[\vS']$ there is a pair $(i,j)\in [\#\vV]\times[\vS]$ such that $(V_{i'}',S_{j'}')\subset(V_i,S_j)$.

\begin{theorem}\label{Thm_reg}
For any $\eps>0$ there exists $N=N(\eps,\Omega)$ such that for every $\mu\in M_\Omega$ the following is true.
Every partition $(\vV_0,\vS_0)$ with $\#\vV_0+\#\vS_0\leq1/\eps$ has a refinement
$(\vV,\vS)$ such that $\#\vV+\#\vS\leq N$
with respect to which $\mu$ is $\eps$-regular.
\end{theorem}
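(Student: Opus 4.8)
The plan is to run a Szemerédi-style energy-increment argument, augmented by an extra ``profile'' refinement that takes care of REG3. The point is that REG1, REG2 and REG4 are of the usual ``density'' type, so a bounded refinement forcing an energy increase handles them, whereas REG3 is a condition on \emph{all} $\sigma,\sigma'$ in a class (not almost all) and therefore cannot be enforced by increasing any bounded energy functional; it has to be installed by hand, by a compactness argument, and re-installed every time the partition of $[0,1)$ changes.

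For the energy, I would attach to $\mu\in M_\Omega$ and a partition $(\vV,\vS)$ the vectors $F_{ij}=\int_{S_j}\sigma[\nix|V_i]\,d\mu(\sigma)\in\RR^\Omega$ and set $E(\vV,\vS)=\sum_{i,j}\|F_{ij}\|_2^2/(\lambda(V_i)\mu(S_j))$, the sum over pairs with $\lambda(V_i)\mu(S_j)>0$. Since $F_{ij}/(\lambda(V_i)\mu(S_j))\in\cP(\Omega)$ has $\TV{\nix}=1$ we get $0\le E\le 1$. Joint convexity of $(v,a)\mapsto\|v\|_2^2/a$ on $\RR^\Omega\times(0,\infty)$ shows $E$ is non-decreasing under refinement, and the standard defect-of-Cauchy-Schwarz computation shows that splitting $V_i$ into $U, V_i\setminus U$ and $S_j$ into $T,S_j\setminus T$ raises that cell's contribution by at least $\lambda(U)\mu(T)\bigl\|F_{U,T}/(\lambda(U)\mu(T))-F_{ij}/(\lambda(V_i)\mu(S_j))\bigr\|_2^2$, where $F_{U,T}=\int_T\sigma[\nix|U]\,d\mu$. (I read the quantities in REG4 with the averaging normalisation, i.e.\ as ``average over the sub-square'' as in the surrounding text, so that the REG4-deviation at $(i,j)$ is precisely $\TV{F_{U,T}/(\lambda(U)\mu(T))-F_{ij}/(\lambda(V_i)\mu(S_j))}$; with $\|v\|_2\ge|\Omega|^{-1/2}\TV v$ this feeds straight into the defect bound.)

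For REG3: given $\vV$ with $K$ classes and $\delta>0$, the maps $\Phi_i\colon\sigma\mapsto\sigma[\nix|V_i]$ are $1$-Lipschitz from $\step_\Omega$ into a fixed compact subset of $\RR^\Omega$; cutting that compact set into finitely many measurable pieces of $\TV{\nix}$-diameter $<\delta$ (a grid does this, with $C_0(\delta,|\Omega|)$ pieces) and pulling back along $\Phi_1,\dots,\Phi_K$ yields a measurable partition $\mathcal Q_{\vV,\delta}$ of $\step_\Omega$ into at most $C_0(\delta,|\Omega|)^{K}$ classes in which any two $\sigma,\sigma'$ of one class satisfy $\TV{\sigma[\nix|V_i]-\sigma'[\nix|V_i]}<\delta$ for every $i$. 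So refining $\vS$ by $\mathcal Q_{\vV,\eps}$ makes REG3 hold for \emph{every} square, at a cost depending only on $\#\vV,\eps,|\Omega|$; this must be redone whenever $\vV$ is refined, as splitting a $V_i$ can destroy REG3.

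The iteration then goes: start from $(\vV^{(0)},\vS^{(0)})=(\vV_0,\vS_0\vee\mathcal Q_{\vV_0,\eps})$, and given $(\vV^{(t)},\vS^{(t)})$ with REG3 holding everywhere, stop if $\mu$ is $\eps$-regular; otherwise take $R$ to be all positive-measure squares satisfying REG4 (so REG1, REG3 hold on $R$), conclude from the failure of REG2 that the positive-measure squares violating REG4 carry $\lambda\times\mu$-mass $\ge\eps$, pick REG4-witnesses $U_{ij}\subset V_i^{(t)}$, $T_{ij}\subset S_j^{(t)}$ with $\lambda(U_{ij})\ge\eps\lambda(V_i^{(t)})$, $\mu(T_{ij})\ge\eps\mu(S_j^{(t)})$ and deviation $\ge\eps$, refine $\vV^{(t)}$ by all $U_{ij}$ and $\vS^{(t)}$ by all $T_{ij}$, and finally refine the resulting $\vS$ by $\mathcal Q_{\vV^{(t+1)},\eps}$. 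The defect bound gives an energy gain of at least $\eps^5/|\Omega|$ per step, so the process halts after $\le|\Omega|/\eps^5$ steps with an $\eps$-regular refinement of $(\vV_0,\vS_0)$; and since one step sends $(\#\vV,\#\vS)$ to a pair bounded by a fixed function of the old one (essentially $\#\vV\mapsto\#\vV\,2^{\#\vS}$, then $\#\vS\mapsto\#\vS\,2^{\#\vV}\,C_0(\eps,|\Omega|)^{|\Omega|\#\vV}$), starting from $\#\vV_0+\#\vS_0\le 1/\eps$ the final class count is bounded by some tower-type $N(\eps,|\Omega|)$. The only genuinely non-routine step is REG3 and its interaction with the increment: one must verify that re-installing it after each $\vV$-refinement (i) enlarges the partitions only by a bounded factor and (ii) never lowers $E$ — both hold because $\mathcal Q_{\vV,\eps}$ only subdivides $\vS$, by a bounded amount. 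Everything else — the convexity/defect inequalities, the Lipschitz estimate for $\Phi_i$, and the bookkeeping of $N$ — is routine.
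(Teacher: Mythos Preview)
Your proposal is correct and follows essentially the same route as the paper's proof: a \Szemeredi-style increment argument for REG1/REG2/REG4, with the extra ``profile'' refinement of $\vS$ (pulling back a finite $\TV{\nix}$-partition of $\cP(\Omega)$ along the maps $\sigma\mapsto\sigma[\nix|V_i]$) to install REG3, re-applied after each $\vV$-refinement. The only cosmetic difference is that you track the \emph{increasing} energy $E(\vV,\vS)=\sum_{i,j}\lambda(V_i)\mu(S_j)\,\|F_{ij}/(\lambda(V_i)\mu(S_j))\|_2^2$, whereas the paper tracks the \emph{decreasing} conditional-variance index $\ind_\mu(\vV,\vS)$; by the variance decomposition these sum to a constant, so the two arguments are literally dual. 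Your reading of REG4 with the averaging normalisation is also the one used in the paper's proof (cf.\ the defect inequality there), and your observation that re-installing REG3 only subdivides $\vS$---hence never decreases $E$---matches the paper's use of the monotonicity of the conditional variance at the same step.
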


In light of \Prop~\ref{Prop_homeomorphic}, \Thm~\ref{Thm_reg} would follow from the regularity lemma for
	graphons~\cite[\Lem~9.16]{Lovasz} if we were to drop condition {\bf REG3}.
In fact, adapting the standard proof from~\cite{Szemeredi} to accommodate {\bf REG3} is not difficult.
For the sake of completeness we carry this out in detail in \Sec~\ref{Sec_Kathrin}.

A regularity lemma for measures on $\Omega^n$ was proved in~\cite{Victor}.
But even in the discrete case \Thm~\ref{Thm_reg} gives a stronger result.
The improvement is that {\bf REG4} above holds for all ``small sub-squares'' $U\times T$ simultaneously.

How does the concept of regularity connect with the cut metric?
For a partition $\vV$ of $[0,1]$ and $\sigma\in\step_\Omega$ define
	$\sigma[\nix|\vV]\in\cW_\Omega$ by
	$$\sigma_x[\omega|\vV]=\sum_{i\in[\#\vV]}\vecone\{x\in V_i\}\sigma_x[\omega|V_i].$$
Thus, $\sigma[\nix|\vV]:[0,1)\to\cP(\Omega)$ is constant on the classes of $\vV$.
Further, for a pair  $(\vV,\vS)$ of partitions and $\mu\in M_\Omega$ let
	$$\mu[\nix|\vV,\vS]=\sum_{i\in[\#\vS]}\atom_{\int_{S_i}\sigma[\nix|\vV]d\mu(\sigma)}.$$
Hence, $\mu[\nix|\vV,\vS]\in M_\Omega$ is supported on a discrete set of functions $[0,1)\to\cP(\Omega)$
that are constant on the classes of $\vV$.
We might think of $\mu[\nix|\vV,\vS]$ as the ``conditional expectation'' of $\mu$ with respect to $(\vV,\vS)$.

\begin{proposition}\label{Prop_reg2metric}
Let $\eps>0$ and assume that $\mu$ is $\eps$-regular w.r.t.\ $(\vV,\vS)$.
Then
	$\Cutm(\mu,\mu[\nix|\vV,\vS])<2\eps$.
\end{proposition}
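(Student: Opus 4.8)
The plan is to construct an explicit coupling $\gamma$ of $\mu$ and $\mu[\nix|\vV,\vS]$ that ``identifies'' each $\sigma \in S_j$ with the single function $\int_{S_j}\sigma'[\nix|\vV]\,d\mu(\sigma')$ to which $S_j$ contributes an atom, and then estimate the cut-norm of the defect. Concretely, let $\tau_j = \int_{S_j}\sigma'[\nix|\vV]\,d\mu(\sigma')\in\step_\Omega$, so $\mu[\nix|\vV,\vS] = \sum_j \mu(S_j)\,\atom_{\tau_j}$, and let $\gamma$ be the law of $(\SIGMA, \tau_{J})$ where $\SIGMA\sim\mu$ and $J$ is the index of the class $S_j$ containing $\SIGMA$. (One discards the classes with $\mu(S_j)=0$, which carry no mass.) With this coupling, for arbitrary measurable $B\subset\step_\Omega^2$ and $U\subset[0,1)$ I need to bound
\[
\TV{\sum_j \int_{\{\sigma\in S_j:\,(\sigma,\tau_j)\in B\}} \int_U \sigma_x - (\tau_j)_x\,dx\,d\mu(\sigma)}.
\]

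The estimate splits into three pieces, one per regularity condition. First, by \textbf{REG2}, the contribution of all pairs $(i,j)\notin R$ and of the classes not meeting $R$ is at most $\eps$ in total mass, hence contributes at most $\eps$ to the norm (the integrand is bounded by $2$ in total variation, but restricting to $U$ inside a single non-good cell or summing the non-good mass gives the clean bound; one is free to absorb constants into the final $2\eps$). Second, fix a good pair $(i,j)\in R$ and restrict $U$ to $U\cap V_i$. On this piece, $\tau_j$ is constant and equals $\int_{S_j}\sigma'[\nix|\vV]\,d\mu(\sigma')/\mu(S_j)$ on $V_i$; by \textbf{REG3} every $\sigma\in S_j$ has $\sigma[\nix|V_i]$ within $\eps$ (in $\TV{\nix}$) of any other, hence within $\eps$ of the average $(\tau_j)$ over $V_i$. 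Therefore $\int_{U\cap V_i}\sigma_x - (\tau_j)_x\,dx$ has total-variation norm at most $\eps\lambda(V_i)$ for each such $\sigma$; summing over $(i,j)\in R$ with weights $\mu(\{\sigma\in S_j:\dots\})\le\mu(S_j)$ and $\lambda(V_i)$ gives a contribution bounded by $\eps\sum_{i}\lambda(V_i)\le\eps$. Summing the two pieces yields the bound $2\eps$; in fact \textbf{REG4} is not even needed for this particular statement (it becomes relevant elsewhere), so one should double-check whether the intended proof instead routes the second piece through \textbf{REG4} with $T=S_j$, $U=U\cap V_i$, in which case the average $\bck{\SIGMA[\nix|U\cap V_i]}_{\mu[\nix|S_j]}$ is compared to $\bck{\SIGMA[\nix|V_i]}_{\mu[\nix|S_j]} = (\tau_j)$ on $V_i$ — either route gives the same conclusion.

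The only subtlety — and the step I would be most careful about — is the interplay between the set $B$ and the ``goodness'' decomposition: because $B\subset\step_\Omega^2$ can depend jointly on $\sigma$ and on $\tau_j$, one must check that intersecting with $B$ only shrinks the mass of each cell and never introduces sign cancellations that the triangle inequality would miss; this is fine since the bound for each fixed good $(i,j)$ is a pointwise-in-$\sigma$ bound on $\norm{\int_{U\cap V_i}\sigma_x-(\tau_j)_x\,dx}_1$, so restricting the domain of $\sigma$-integration to any measurable subset only helps. A second routine point is that $U$ decomposes as $U = \bigcup_i (U\cap V_i)$ disjointly, so the $U$-integral splits across the partition $\vV$ without loss. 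Assembling the per-cell bounds and the \textbf{REG2} error term, one concludes $\Cutm(\mu,\mu[\nix|\vV,\vS]) \le 2\eps$, as claimed. I would present the argument by first writing the coupling, then the $U = \bigcup_i(U\cap V_i)$ split, then the two-term (good/bad) bound, citing \textbf{REG2} and \textbf{REG3} (and \textbf{REG4} if the authors prefer that route) at the appropriate lines.
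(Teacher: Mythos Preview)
Your coupling and the $U=\bigcup_i(U\cap V_i)$ split match the paper's, but the core estimate via \textbf{REG3} has a gap. \textbf{REG3} only bounds $\TV{\sigma[\nix|V_i]-\sigma'[\nix|V_i]}$, i.e., the difference of averages over the \emph{entire} cell $V_i$; it gives no control over $\int_{U\cap V_i}(\sigma_x-(\tau_j)_x)\,dx$ for a proper subset $U\cap V_i\subsetneq V_i$. For instance, take $\sigma_x=\delta_0$ on one half of $V_i$ and $\sigma_x=\delta_1$ on the other, with $(\tau_j)_x\equiv\tfrac12\delta_0+\tfrac12\delta_1$ on $V_i$: then $\sigma[\nix|V_i]=(\tau_j)[\nix|V_i]$ exactly, yet for $U$ equal to one half the difference has norm of order $\lambda(V_i)$. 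So your ``therefore'' does not follow, and the claim that \textbf{REG4} is not needed is backwards --- \textbf{REG4} is the essential ingredient.

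The paper's proof uses \textbf{REG4}, not \textbf{REG3}. After the same coupling, the contribution of a good cell $(i,j)$ is (up to the weight $\mu(S_j)\lambda(V_i)$) the norm of the average of $\sigma_x-(\tau_j)_x$ over $(x,\sigma)\in(U\cap V_i)\times B_j$, where $B_j=\{\sigma\in S_j:(\sigma,\tau_j)\in B\}$ is the $B$-slice. This is precisely what \textbf{REG4} controls, applied with $T=B_j$ --- not $T=S_j$ as in your alternative route. (When $\mu(B_j)<\eps\,\mu(S_j)$ or $\lambda(U\cap V_i)<\eps\,\lambda(V_i)$ the prefactor already makes the term small; bad cells contribute $<\eps$ by \textbf{REG2}.) The point is that no pointwise-in-$\sigma$ control is available here; the integration over the $B$-slice has to come \emph{before} regularity is invoked, which is exactly why the sub-rectangle condition \textbf{REG4} is the right tool.
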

\begin{proof}
Let $\sigma^{(i)}=\int_{S_i}\sigma[\nix|\vV]d\mu(\sigma)$.
We define a coupling $\gamma$ of $\mu,\mu[\nix|\vV,\vS]$ in the obvious way: for a measurable $X\subset S_i$ let 
	$\gamma(X\times\{\sigma^{(i)}\})=\mu(X)$.
Now, let $U\subset[0,1]$ and $B\subset\step_\Omega^2$ be measurable.
Due to the construction of our coupling we may assume that $B=\bigcup_i B_i\times\{\sigma^{(i)}\}$ for certain sets $B_i\subset S_i$.
Moreover, let $U_j=U\cap V_j$.
Then
	\begin{align*}
	\TV{\int_B\int_U\sigma(x)-\tau(x) dx d\eta(\sigma,\tau)}&\leq
	\sum_{(i,j):\mu(S_i)\lambda(V_j)>0}\mu(S_i)\lambda(V_j)\TV{\int_{B_i}\int_{U_j}\sigma(x)-s_i(x)\frac{dx}{\lambda(V_j)}\frac{d\mu(\sigma)}{\mu(S_i)}}.
	\end{align*}
By {\bf REG1} and {\bf REG4} the last expression is less than $2\eps$.
\end{proof}

\begin{corollary}
For any $\eps>0$ there exists $N=N(\eps)>0$ such that for any $\mu\in M_\Omega$ there exist  $\sigma_1,\ldots,\sigma_N\in\step_\Omega$
and $w=(w_1,\ldots,w_N)\in\cP([N])$ such that
	$\Cutm\bc{\mu,\sum_{i=1}^kw_i\atom_{\sigma_i}}<\eps.$
\end{corollary}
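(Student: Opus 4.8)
\emph{Proof proposal.}
The plan is to read the statement off the regularity lemma (\Thm~\ref{Thm_reg}) together with its metric reformulation (\Prop~\ref{Prop_reg2metric}). Fix $\eps>0$; since the conclusion only gets weaker as $\eps$ grows, we may assume $\eps\le1$. First I would apply \Thm~\ref{Thm_reg} with parameter $\eps/2$ to the \emph{trivial} partition $(\vV_0,\vS_0)$ consisting of the single classes $V_0=[0,1)$ and $S_0=\step_\Omega$; this is legitimate because $\#\vV_0+\#\vS_0=2\le2/\eps$. We thus obtain an integer $N=N(\eps/2,\Omega)$ — which depends only on $\eps$, as $\Omega$ is fixed throughout — and, for the given $\mu\in M_\Omega$, a refinement $(\vV,\vS)$ with $\#\vV+\#\vS\le N$ with respect to which $\mu$ is $(\eps/2)$-regular.

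Second, I would invoke \Prop~\ref{Prop_reg2metric}, which gives $\Cutm(\mu,\mu[\nix|\vV,\vS])<2\cdot\eps/2=\eps$. It then only remains to observe that $\mu[\nix|\vV,\vS]$ already has the required shape. Indeed, by its definition it is a convex combination $\sum_i w_i\atom_{\sigma_i}$ of at most $\#\vS\le N$ Dirac measures, the weights being $w_i=\mu(S_i)$ (so that $w=(w_i)\in\cP([\#\vS])$) and $\sigma_i$ the $\mu[\nix|S_i]$-average of $\sigma[\nix|\vV]$ over the classes $S_i$ with $\mu(S_i)>0$; each $\sigma_i$ lies in $\step_\Omega$ because $\cP(\Omega)$ is convex and the functions $\sigma[\nix|\vV]$ take values in $\cP(\Omega)$, so averaging never leaves $\step_\Omega$. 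Padding with finitely many additional $\sigma_i\in\step_\Omega$ of weight $0$ if one insists on exactly $N$ atoms, we conclude $\Cutm(\mu,\sum_{i=1}^{N}w_i\atom_{\sigma_i})<\eps$, as claimed.

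There is essentially no obstacle here; the whole content is carried by \Thm~\ref{Thm_reg} and \Prop~\ref{Prop_reg2metric}. The only two points that deserve a moment's care are (i) that the regularity lemma is fed the \emph{trivial} starting partition, which is exactly what makes the resulting bound $N$ uniform over all $\mu\in M_\Omega$, and (ii) the elementary verification that $\mu[\nix|\vV,\vS]$ is a genuine convex combination of Dirac measures supported on $\step_\Omega$.
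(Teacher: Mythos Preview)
Your proposal is correct and follows exactly the approach the paper intends: the paper's own proof is the one-line remark that the corollary is immediate from \Thm~\ref{Thm_reg} and \Prop~\ref{Prop_reg2metric}, and you have simply spelled out that immediacy in full detail (including the harmless normalisation $\eps\le1$ and the padding to exactly $N$ atoms).
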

\begin{proof}
This is immediate from \Thm~\ref{Thm_reg} and \Prop~\ref{Prop_reg2metric}.
\end{proof}

\subsection{Proof of \Thm~\ref{Thm_reg}}\label{Sec_Kathrin}
Following the path beaten in~\cite{Victor,Szemeredi,Tao}, we define the index of $(\vV,\vS)$ as
	\begin{align*}
	\ind_\mu(\vV,\vS)&=\Erw\bck{\Var[\SIGMA_{\vec x}[\omega]|\vV,\vS]}_\mu
		=\frac1{|\Omega|}\sum_{\omega\in\Omega}\sum_{i=1}^{\#\vV}\sum_{j=1}^{\#S_j}
			\int_{S_j}\int_{V_i}\bc{\sigma_x(\omega)-\int_{S_j}\int_{V_i}\sigma_y(\omega)\frac{dy}{\lambda(V_i)}\frac{d\mu(\sigma)}{\mu(S_j)}}^2dxd\mu(\sigma).
	\end{align*}

\noindent
There is only one simple step that we add to the proof from~\cite{Szemeredi}.
Namely, following~\cite{Victor}, we begin by refining the partition $\vS_0$ to guarantee {\bf REG3}.
Specifically, the compact set $\cP(\Omega)$ has a 
partition into a finite number of sets $\vQ=(Q_1,\ldots, Q_K)$ such that $\TV{\mu - \mu'}\leq\eps$ for all $\mu, \mu'\in Q_i$, $i\in[K]$.
Now, let $\vV(1)=\vV_0$ and let $\vS(1)$ be the coarsest refinement of $\vS_0$ such that
for every $i\in[\#\vV_1]$, $j\in\#\vS_1$ there is $k\in[K]$ such that $\sigma[\nix|V_i]\in Q_k$ for all $\sigma\in S_j(1)$.
Then $\#\vS(1)\leq K\#\vS_0$.

Starting from $(\vV(1),\vS(1))$, we construct a sequence $(\vV(t),\vS(t))$ of partitions inductively.
The construction stops once $\mu$ is $\eps$-regular w.r.t.\ $(\vV(t),\vS(t))$, in which case we are done.
Assuming otherwise, consider the set 
$\bar R(t)\subset [\#\vV(t)]\times[\#\vS(t)]$ of $(i,j)$ such that {\bf REG4} fails to hold on $(V_i,S_j)$.
Then
	\begin{equation}
	 \sum_{(i,j)\in \bar R(t)}\lambda(V_i)\mu(S_j)\geq\eps.
	\end{equation}
Further, for each $(i,j)\in\bar R$ there exist $U_i\subset V_i$, $\lambda(U_i)\geq\eps\lambda(V_i)$, $T_j\subset S_j$, $\mu(T_j)\geq\eps\mu(S_j)$
and $\omega_{ij}$ such that
\begin{align}\label{eq_mass}
  \abs{\bck{\SIGMA[\omega_{ij}|U_i]}_{\mu[\nix|T_j]}-\bck{\SIGMA[\omega_{ij}|V_i]}_{\mu[\nix|S_j]}}
  \geq \frac{\eps}{|\Omega|}.
\end{align}
Obtain the partition $(\vV(t,i,j),\vS(t,i,j))$ from $(\vV(t),\vS(t))$ by splitting $V_i$ and $S_j$ into the sub-classes
$U_i$, $V_i\setminus U_i$ and $S_j$, $S_j\setminus T_j$. Clearly, $\#\vV_{ij}\leq2\#\vV$ and $\#\vS_{ij}\leq2\#\vS$, respectively.
Then
	\begin{align}\nonumber
	\Erw[\bck{\Var[\SIGMA_{\vec x}(\omega_{ij})|\vV(t),\vS(t)]}_{\mu[\nix|S_j]}|V_i]
		&=\Erw[\bck{\Var[\SIGMA_{\vec x}(\omega_{ij})|\vV(t,i,j),\vS(t,i,j)]}_{\mu[\nix|S_j]}|V_i]\\
		&\qquad+\Erw[\bck{\Var[\Erw[\SIGMA_{\vec x}(\omega_{ij})|\vV(t,i,j),\vS(t,i,j)]|\vV(t),\vS(t)]}_{\mu[\nix|S_j]}|V_i].
			\label{eqKS1}
	\end{align}
Moreover, \eqref{eq_mass} implies that on
$V_i\times S_j$ we have
\begin{equation}\label{eqKS2}
\Var[\Erw[\SIGMA_{\vec x}[\omega]|\vV(t,i,j),\vS(t,i,j)]|\vV(t),\vS(t)]\geq
\frac{\lambda(U_i)}{\lambda(V_i)}\frac{\mu(T_j)}{\mu(S_j)}\abs{\bck{\SIGMA[\omega_{ij}|U_i]}_{\mu[\nix|T_j]}-\bck{\SIGMA[\omega_{ij}|V_i]}_{\mu[\nix|S_j]}}^2.
\end{equation}
Combining (\ref{eq_mass}), (\ref{eqKS1}) and (\ref{eqKS2}), we obtain
	\begin{equation}\label{eq_ind}
	\Erw[\bck{\Var[\SIGMA_{\vec x}(\omega_{ij})|\vV(t),\vS(t)]}_{\mu[\nix|S_j]}|V_i]
		\geq\Erw[\bck{\Var[\SIGMA_{\vec x}(\omega_{ij})|\vV(t,i,j),\vS(t,i,j)]}_{\mu[\nix|S_j]}|V_i]+
 \lambda(U_i)\mu(T_j)\frac{\eps^2}{|\Omega|^3}.
\end{equation}
Now, let $(\vV(t+1),\vS'(t+1))$ denote the coarsest common refinement of all the partitions $(\vV(t,i,j),\vS(t,i,j))_{i,j\in\bar R}$.
Finally, obtain $\vS(t+1)$ from $\vS'(t+1)$ as in the very first step by splitting each class $\vS_j'(t+1)$
into classes $(\vS_{j,k}'(t+1))_{k\leq K}$ such that $\TV{\sigma[\nix|\vV_i(t+1)]-\sigma'[\nix|\vV_i(t+1)]}<\eps$ for all $i\in[\#\vV(t+1)]$.
Then (\ref{eq_ind}) and the monotonicity of the conditional variance imply
\begin{align}\label{eq_ind2}
 \ind_\mu(\vV(t+1),\vS(t+1))
 &\leq \ind_{\mu}(\vV(t),\vS(t))-\frac{\eps^2}{|\Omega|^3}\sum_{(i,j)\in\bar R}\lambda(U_i)\mu(T_j)
 \leq \ind_{\mu}(\vV(t),\vS(t))-\frac{\eps^5}{|\Omega|^3}.
\end{align}
Since the index lies between $0$ and $1$, (\ref{eq_ind2}) implies that the construction stops
after at most $\eps^{-5}|\Omega|^3$ steps.

\section{The replica symmetric solution}\label{Sec_Gibbs}

\noindent
In this section we apply the notion of convergence and the results from \Sec~\ref{Sec_cubes} to Gibbs measures induced by random graphs.
Many of the arguments build upon the work of Panchenko on asymptotic Gibbs measures~\cite{Panchenko}.

\subsection{Random factor graphs}
A remarkably wide variety of problems in combinatorics can be described in terms of {\em factor graphs}.
These are bipartite graphs with two types of vertices called {\em variable nodes} and {\em constraint nodes}.
The variable nodes can be assigned ``spins'' from a finite set $\Omega$ and each constraint node is decorated with a weight function
that assigns every spin configuration of its adjacent variable nodes a positive weight.
Natural examples of such models occur in combinatorics, mathematical physics or information theory~\cite{MM,Rudi}.
We shall see a few concrete examples in just a moment.

Let us first attempt an abstract, fairly comprehensive definition (see~\cite{Victor} for an even more general setup).
Suppose that $\Omega$ is a given finite set of {\em spins} and that $\Psi$ is a set of functions $\psi:\Omega^{k_{\psi}}\to(0,\infty)$ of arity $k_\psi\geq1$.
A {\em $(\Omega,\Psi)$-factor graph} $G=(V_G,F_G,(\psi_a)_{a\in F_G},\partial_G)$ consists of
\begin{description}
\item[GR1] (finite or) countable disjoint sets $V_G,F_G$, 
\item[GR2] a map $a\in F_G\mapsto\psi_a\in\Psi$,
\item[GR3] a map
	$\partial_G:F_G\to\bigcup_{l\geq 1}V_G^l$ such that $\partial_Ga=(\partial_{G}(a,j))_j\in V_G^{k_{\psi_a}}$ for all $a\in F_G$
	and such that for every $x\in V_G$ the set $\{a\in F_G:\exists j\in[k_{\psi_a}]:x=\partial_{G}(a,j)\}$ is finite.
\end{description}
Let us introduce the shorthand $k_a=k_{\psi_a}$ for the arity of the constraint $a$.
Moreover, if $\sigma:V_G\to\Omega$, then we let 
$\sigma(\partial_Ga)=(\sigma(\partial_G(a,1)),\ldots,\sigma(\partial_G(a,k_{a})))$.

A finite factor graph $G$ naturally induces a probability measure on the set $\Omega^{V_G}$ of all possible assignments:
 the {\em Gibbs measure} of $G$ is defined by
	\begin{align}\label{eqGibbs1}
	\mu_G&:\Omega^{V_G}\to(0,1),\quad\sigma\mapsto Z_G^{-1}
		\prod_{a\in F_G}\psi_a(\sigma(\partial_Ga)),&\mbox{where}\qquad
	Z_G&=\sum_{\sigma\in\Omega^{V_G}}\prod_{a\in F_G}\psi_a(\sigma(\partial_Ga))
	\end{align}
is the {\em partition function} of $G$.
Thus, the probability mass that $\mu_G$ assigns to $\sigma$ is proportional to the weights
 $\psi_a(\sigma(\partial_Ga))$  that the constraint nodes $a$ assign to the spin configurations of their incident variables.

Naturally, we can view a factor graph as a bipartite graph with node sets $V_G$ and $F_G$ such that each
$a\in F_G$ is adjacent to the variable nodes $\partial_{G}(a,j)$ for $j\leq k_{\psi_a}$.
Hence, we call them the {\em neighbors} of $a$ and take license to just write $\partial_Ga$ for the set of neighbors.
Conversely, we write $\partial_Gx$ for the set of constraint nodes $a$ such that $x\in\partial_Ga$.
By {\bf GR3} $\partial_Gx$ is a finite set for every $x$.
Further, a {\em rooted factor graph} is a connected factor graph  together with a distinguished variable node $r$, its {\em root}.

However, we keep in mind that the ``bipartite graph'' point of view looses a bit of information.
Indeed, the neighbors of $a$ are {\em ordered}.
This is important in evaluating (\ref{eqGibbs1}) because the $\psi_a$ need not be permutation invariant.

\begin{example}[The Ising model on the grid $\ZZ^2$]
Let $G_n$ be a finite subgraph of $\ZZ^2$ (say a large box), $\Omega = \{ \pm 1\}$, and $\psi: \{ \pm 1 \}^2 \to (0,\infty)$ be defined by $\psi(x_1,x_2) = \exp ( \beta \cdot x_1x_2)$ for a fixed real number $\beta$.  Then the Ising model on $G_n$ is defined by
\begin{align*}
\mu_{G_n} (\sigma) &= \frac{\prod_{(u,v) \in E (G_n)} \psi (\sigma(u), \sigma(v) ) }{Z_{G_n}}\,,
\end{align*}
where
\begin{align*}
Z_{G_n} &= \sum_{\sigma \in \{ \pm 1 \}^{V(G)}} \prod_{(u,v) \in E (G_n)} \psi (\sigma(u), \sigma(v) )  \,.
\end{align*}
 If $\beta >0$ we say the model is \textit{ferromagnetic} (like spins preferred across edges) and if $\beta <0$, the model is \textit{antiferromagnetic}.  
\end{example}

\begin{example} [The (positive-temperature) $k$-SAT model]\label{Ex_kSAT}
The $k$-SAT model is an example of a factor model with multiple constraint types.  We take $\Omega = \{ \pm 1 \}$ and have    $2^k$ constraint types, each of arity $k$, indexed by vectors  $\vec c \in \{ \pm 1 \}^k$, with
\begin{align*}
\psi^{(\vec c)}(\vec x) &= \exp ( - \beta \mathbf 1\cbc{\vec c \cdot \vec x = -k} ) 
\end{align*}
 for $\vec x \in \{\pm 1 \}^k$. The temperature parameter  $\beta$ is a fixed real number controlling how much satisfied clauses are preferred to unsatisfied clause.  The $2^k$ different constraint function types correspond to the $2^k$ different ways to assign signs to $k$ variables that appear together in a $k$-CNF clause.  We have $\vec c \cdot \vec x = - k$ if all $k$ signed variables are $-1$, in which case the clause is unsatisfied. We form a random instance of the $k$-SAT model by choosing a random number of constraints of each of the $2^k$ types to form $F_G$, and to each constraint $a \in F_G$ attaching a uniformly random ordered set of $k$ variable nodes $x_{a_1}, \dots x_{a_k}$  from $V_G = \{ x_1, \dots x_n \}$. The Gibbs measure is then a probability distribution over all assignments to the $n$ variables  given by
 \begin{align*}
 \mu_{G} (\sigma)&= \frac{ \prod_{a \in F_G} \psi_a(\sigma(x_{a_1}), \dots \sigma(x_{a_k}))}{Z_G}  \, 
 \qquad\mbox{where}\qquad
 	Z_G=\sum_\sigma\prod_{a \in F_G} \psi_a(\sigma(x_{a_1}), \dots \sigma(x_{a_k})).
\end{align*}
Studying $\mu_G$ can be viewed as a generalization of the MAX $k$-SAT problem~\cite{maxsat}.
In fact, the maximum number of clauses that can be satisfied simultaneously comes out as $|F_G|+\lim_{\beta\to\infty}\frac{\partial}{\partial\beta}\ln Z_G$.
\end{example}  

Apart from natural geometric factor graph models such as the Ising model,
there is substantial interest in models where the geometry of interactions is random like in Example~\ref{Ex_kSAT}.
For instance, such models appear in the statistical mechanics of disordered systems,  coding theory and, of course,
the theory of random graphs itself~\cite{JLR,MM,Rudi}.
Perhaps the simplest and most natural way of defining such models is by extension of the \Erdos-\Renyi\ model.
Hence, given a set $\Psi$ of possible weight functions and a sequence $\rho=(\rho_\psi)_{\psi\in\Psi}$ of positive reals we define
the random factor graph $\G_n=\G_n(\Psi,\rho)$ as follows.
The set of variable nodes is $V_n=\{x_1,\ldots,x_n\}$.
Moreover, choosing $m_\psi=\Po(n\rho_\psi)$ independently for each $\psi\in\Psi$, we define the set of constraint nodes
as $$F_n=\{a_{\psi,i}:i\leq m_\psi\mbox{ for all }\psi\in\Psi\}.$$
Further, independently for each $a_{\psi,i}\in F_n$ choose $\partial_{\G_n} a_{\psi,i}\in V_n^{k_\psi}$ uniformly at random.

A random factor graph $\G_n$ induces a Gibbs measure $\mu_{\G_n}$ on $\Omega^n$ via (\ref{eqGibbs1}).
Thus, we could just apply the ``limit theory'' for discrete measures from \Sec~\ref{Sec_cubes} to the sequence $(\mu_{\G_n})_n$.
Indeed, this is essentially what Panchenko~\cite{Panchenko} {does} (using the Aldous-Hoover representation instead of \Sec~\ref{Sec_cubes}).
However, the geometry of the sparse random graph $\G_n$ contains additional information that is not directly encoded in the measure $\mu_{\G_n}$.
Hence, the basic idea in the following is to study the convergence of the sequence $(\mu_{\G_n})_n$ of measures
jointly with the convergence of the geometry of the factor graph $\G_n$.
This will enable us to identify the limit of $(\mu_{\G_n})_n$ with a ``geometric'' measure on a (possible infinite) random tree.
In particular, we aim to use these insights to get a handle on the {\em free energy} of the model, defined as
	\begin{equation}\label{eqFreeEnergy}
	\lim_{n\to\infty}\frac1n\Erw[\ln Z_{\G_n}],
	\end{equation}
provided that the limit even exists.
(Of course, $\Psi,\rho$ remain fixed as $n$ grows.)

A similar ``geometric'' approach was pursued in~\cite{Victor} for a more general class of factor graph models, and without a proper notion
of convergence of the sequence of Gibbs measures.
For the more specific models studied here we will obtain somewhat stronger results from simpler proofs.

\subsection{Local weak convergence}
To carry out this program we need to set up a notion of a ``limit'' of the geometry of $\G_n$ as $n\to\infty$.
To adapt the appropriate formalism of ``local weak convergence''~\cite{AldousLyons,BordenaveCaputo,Lovasz}
to our context, let $G=(V_G,F_G,\partial_G,(\psi_a)_{a\in F_G},r)$, $G'=(V_{G'},F_{G'},\partial_{G'},(\psi_a)_{a\in F_G},r')$ be two rooted factor graphs.
An {\em isomorphism} $f:G\to G'$ is a bijection $f:V_T\cup F_T\to V_{T'}\cup F_{T'}$ such that
\begin{description}
\item[ISM1] $f(V_T)=V_{T'}$, $f(F_T)=F_{T'}$, $f(r)=r'$,
\item[ISM2] $\psi_{f(a)}=\psi_a$ for all $a\in F_T$,
\item[ISM3] $\partial_{T'}(f(a),j)=f(\partial_{T}(a,j))$ for all $j\in[k_{\psi_a}]$.
\end{description}
We write $G\ism G'$ if there is an isomorphism $G\to G'$.

Let $[G]$ be the isomorphism class of $G$ and let $\fG$ be the set of all isomorphism classes.
Further, for an integer $\ell\geq1$ let $\partial^\ell G$ be obtained from $G$ by deleting all (variable and constraint) nodes whose distance from the root exceeds $2\ell$.
Then it makes sense to write $\partial^\ell[G]$, because $G\ism H$ implies that $\partial^\ell G\ism\partial^\ell H$ for all $\ell$.

To be allowed to use standard graph terminology for isomorphism classes, let us pick one representative $G_0$ of every isomorphism class $[G]$ arbitrarily.
Hence, if we speak, e.g., of ``the neighbor of the root of $[G]$'', we refer to the corresponding object in the chosen representative $G_0$.

We endow $\fG$ with the coarsest topology that makes all the functions
	\begin{align}\label{eqLWC}
	\fG\to\{0,1\},\qquad [G]\mapsto\vecone\{\partial^\ell G\ism\partial^\ell H\}\qquad ([H]\in\fG)
	\end{align}
continuous.
Moreover, let $\fT\subset\fG$ be the set of all isomorphism classes of all acyclic rooted factor graphs with the induced topology.
The spaces $\fG,\fT$ are Polish~\cite{AldousLyons,BordenaveCaputo}.
Hence, so are the spaces $\cP(\fG)$, $\cP(\fT)$ of probability measures on $\fG,\fT$ with the weak topology.
Additionally, we equip the spaces $\cP^2(\fG)$, $\cP^2(\fT)$ of probability measures on $\cP(\fG)$, $\cP(\fT)$ with the weak topology as well.

For a factor graph $G$ and a variable node $v$ let $G_{\reroot v}$ be the connected component of $v$ in $G$ rooted at $v$.
Thus, $G_{\reroot v}$ is a rooted factor graph.
Similarly, if $(G,r)$ is a rooted factor graph, then $G_{\reroot v}=(G,v)$ is obtained by re-rooting at $v$.
Each factor graph $G$ induces an empirical distribution on $\fG$, namely
	\begin{align*}
	\Lambda_G&=|V_G|^{-1}\sum_{x\in V_G}\atom_{G_{\reroot x}}\in\cP(\fG).
	\end{align*}
Hence, the random factor graph gives rise to a distribution
	\begin{align*}
	\Lambda_n&=\Erw[\atom_{\Lambda_{\G_n}}]\in\cP^2(\fG).
	\end{align*}
Due to the definition (\ref{eqLWC}) of the topology, this measure captures the distribution of the ``local structure'' of $\G_n$,
i.e., the ``statistics'' of the bounded-size neighborhoods.

Guided by the example of the \Erdos-\Renyi\ random graph, we expect that the local structure of $\G_n$ is described by a branching process.
Specifically,
starting from a single variable node $\cV_0=\{x_0\}$ and with $T_0$ the tree consisting of $x_0$ only, we build a sequence of random trees $(\T_\ell)_\ell$ as follows.
Let $(A_{\psi,x})_{\psi\in\Psi,x\in\cV_\ell}$ be a family of independent random variables such that $A_{\psi,x}$ has distribution $\Po(\rho_{\psi})$.

Now, obtain $\T_{\ell+1}$ from $\T_{\ell}$ by attaching $Y_{\psi,x}$ children, which are constraint nodes with weight function $\psi$, to each $x\in\cV_\ell$.
For each of them independently choose the position that the parent variable occupies uniformly and independently from $[k_\psi]$
and attach $k_{\psi}-1$ further variable nodes.
Finally, let $\cV_{\ell+1}$ be the set of variable nodes of $\T_{\ell+1}$ at distance precisely $2(\ell+1)$ from the root.

Let $\thet_\ell\in\cP(\fT_\ell)$ be the distribution of the random tree $\T_\ell$.
Because the topology is generated by the functions (\ref{eqLWC}), $(\thet_\ell)_{\ell\geq1}$ is a Cauchy sequence.
Since $\cP(\fT)$ is a complete space, there exists a limit $\thet\in\cP(\fT)$.
Further, write $\T$ for a random (possibly infinite) tree drawn from $\thet$.

\begin{proposition}\label{Prop_lwc}
We have $\lim_{n\to\infty}\Lambda_n=\atom_{\thet}$.
\end{proposition}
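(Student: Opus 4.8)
The plan is to decode what convergence to the Dirac mass $\atom_\thet$ in $\cP^2(\fG)$ means, to reduce it to a law of large numbers for the frequencies of bounded-depth neighbourhood types in $\G_n$, and then to establish that law of large numbers by the classical first- and second-moment argument for local weak convergence of sparse random graphs. Concretely, since $\atom_\thet$ is a point mass, the assertion $\Lambda_n=\Erw[\atom_{\Lambda_{\G_n}}]\to\atom_\thet$ in $\cP^2(\fG)$ is equivalent to $\Lambda_{\G_n}\to\thet$ in probability in $\cP(\fG)$. The clopen sets
\begin{align*}
A_{\ell,[H]}=\cbc{[G]\in\fG:\partial^\ell G\ism\partial^\ell H}\qquad(\ell\ge1,\ [H]\in\fG)
\end{align*}
are countable in number (each $\partial^\ell H$ is a finite rooted factor graph), are closed under intersection, and every open subset of $\fG$ is a countable union of them; hence they are convergence-determining. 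By the subsequence characterisation of convergence in probability together with a diagonal argument over this countable family, it therefore suffices to prove that $\Lambda_{\G_n}(A_{\ell,[H]})\to\thet(A_{\ell,[H]})$ in probability for every $\ell\ge1$ and every $[H]\in\fG$.

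\emph{First moment.} Fix $\ell$ and $[H]$ and set $N_n=\abs{\cbc{x\in V_n:\partial^\ell(\G_n)_{\reroot x}\ism\partial^\ell H}}$, so that $\Lambda_{\G_n}(A_{\ell,[H]})=N_n/n$. By exchangeability of the variable nodes, $\Erw[N_n/n]=\pr[\partial^\ell(\G_n)_{\reroot x_1}\ism\partial^\ell H]$, which we evaluate by exploring the radius-$2\ell$ ball around $x_1$ in breadth-first order. Each time a variable node $x$ is reached, the number of not-yet-exposed constraints of type $\psi$ incident with $x$ is, by the Poisson thinning property of the $\Po(n\rho_\psi)$ constraints of $\G_n$, asymptotically Poisson with the parameter used in the definition of $(\T_\ell)_\ell$; the slot of $\partial a$ occupied by $x$ is asymptotically uniform on $[k_\psi]$, and the remaining slots receive pairwise distinct, previously unseen variable nodes except with probability $O(1/n)$ per exposed constraint. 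Since only $O(1)$ nodes are exposed, with probability $1-o(1)$ the exposed ball contains no cycle, and conditioned on this it is distributed exactly as the radius-$2\ell$ ball of $\T$. Using that $\thet$ and $\thet_\ell$ coincide on events depending only on the radius-$2\ell$ ball (immediate from the construction of $(\thet_\ell)_\ell$ and the choice of topology), we obtain $\Erw[N_n/n]\to\thet(A_{\ell,[H]})$.

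\emph{Concentration.} It remains to show $\Var(N_n/n)\to0$, which by Chebyshev's inequality yields the desired convergence in probability. Writing
\begin{align*}
N_n^2=\sum_{x,y\in V_n}\vecone\cbc{\partial^\ell(\G_n)_{\reroot x}\ism\partial^\ell H}\ \vecone\cbc{\partial^\ell(\G_n)_{\reroot y}\ism\partial^\ell H},
\end{align*}
we observe that for a uniformly random ordered pair $(x,y)$ the radius-$2\ell$ balls around $x$ and $y$ are vertex-disjoint with probability $1-O(1/n)$, and we run the exploration of the previous paragraph simultaneously from $x$ and $y$: conditioned on disjointness the two explorations are asymptotically independent, so $\pr[\partial^\ell(\G_n)_{\reroot x}\ism\partial^\ell H,\ \partial^\ell(\G_n)_{\reroot y}\ism\partial^\ell H]=\thet(A_{\ell,[H]})^2+o(1)$. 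Hence $\Erw[N_n^2]=n^2\thet(A_{\ell,[H]})^2+o(n^2)$, so $\Var(N_n/n)=o(1)$, completing the argument.

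\emph{Main obstacle.} The one substantive ingredient is the coupling behind the first- and second-moment steps, namely that the breadth-first neighbourhood of one (resp.\ of two) fixed variable node(s) in $\G_n$ agrees with the branching process $(\T_\ell)_\ell$ up to an event of probability $o(1)$. This is the by-now-standard local weak convergence computation for \Erdos--\Renyi-type random factor graphs (cf.~\cite{AldousLyons,BordenaveCaputo}); the points that require care here are the Poisson thinning of the random counts $m_\psi=\Po(n\rho_\psi)$ of constraints of each type, the bookkeeping of the \emph{ordered} neighbour tuples $\partial_{\G_n}a$ (which are part of the isomorphism type in {\bf ISM3}), and the $O(1/n)$ bound on the probability of closing a cycle during the $O(1)$ steps of the exploration.
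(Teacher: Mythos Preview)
Your proposal is correct and follows essentially the same route as the paper: reduce to convergence in probability of the empirical frequencies $\Lambda_{\G_n}(A_{\ell,[H]})$, then verify this via the first- and second-moment method together with Chebyshev's inequality. The only cosmetic difference is that the paper first replaces the Poisson number of constraints of each type by a binomial (at total-variation cost $o(1)$) and then couples $\partial^\ell_{\G_n}x_1$ with $\partial^\ell\T$ by induction on $\ell$, whereas you work directly with Poisson thinning and a breadth-first exploration; both are standard realisations of the same local weak convergence computation.
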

\begin{proof}
For $T\in\fT$ and $\ell\geq1$ let $Q_{T,\ell}(\G_n)$ be the number of variable nodes $x$ of $\G_n$ such that
$\partial^\ell\G_{n\reroot x}\ism\partial^\ell T$.
Unravelling the construction of the topology via (\ref{eqLWC}), we see that $\lim_{n\to\infty}\Lambda_n=\atom_{\thet}$ iff
$n^{-1}Q_{T,\ell}(\G_n)$ converges in probability to $\pr\brk{\partial^\ell\T=\partial^\ell T}$ for every $T,\ell$.

Hence, fix $T,\ell$ and assume that $n$ is large.
Further, let $\G_n'$ be a random factor graph with variable nodes $V_n$ in which for every $\psi\in\Psi$
each of the $n^{k_\psi}$ possible constraint with weight function $\psi$ is present with probability $p_\psi=\rho_\psi n^{1-k_\psi}$ independently.
Then the number $M_\psi$ of constraints of type $\psi$ has distribution $\Bin(n^{k_\psi},p_\psi)$, and they are mutually independent.
Because the total variation distance of $M_\psi$ and $\Po(n\rho_\psi)$ is $o(1)$ as $n\to\infty$, the same is true of the
random graph distributions $\G_n,\G_n'$.

We are now going to show by induction on $\ell$ that there is a coupling of
$[\partial^\ell_{\G_n}x_1]$ and $\partial^\ell\T$ such that both coincide with probability $1-o(1)$.
For $\ell=0$ there is nothing to show as both graphs consist of the root only.
To proceed from $\ell$ to $\ell+1$, let $W_\ell$ be the set of variable nodes at distance precisely $2\ell$ from $x_1$ in $\G_n'$
and let $V_\ell$ be the set of variable nodes at distance precisely $2\ell$ from $r_{\T}$.
Further, condition on the event $\cE_\ell=\{[\partial^\ell_{\G_n}x_1]=\partial^\ell\T\}$ and fix an isomorphism
	$$\varphi:\partial^\ell_{\G_n}x_1\to\partial^\ell\T.$$
Moreover, let $\cX$ be the event that the random factor graph $\G_n$ either contains a constraint node $a$
such that $|\partial_{\G_n}a\cap W_\ell|\geq2$ or a constraint nodes $b,c$ such that 
$|\partial_{\G_n}b\cap W_\ell|,|\partial_{\G_n}c\cap W_\ell|=1$ and $\partial_{\G_n}b\cap\partial_{\G_n}c\setminus W_\ell\neq\emptyset$.
Because the tree $T$ remains fixed as we let $n\to\infty$, the number of possible $a,b,c$ with these properties is $O(n^{k_\psi-2})$ for every $\psi$.
Therefore, $\pr\brk{\cX|\cE_\ell}=O(1/n)$.
Furthermore, for every $x\in W_\ell$ let number $D_{x,\psi}$ be the number of constraint nodes $a$ of type $\psi$ such that
$\partial_{\G_n}a\cap\bigcup_{l\leq\ell} W_l=\{x\}$.
Given $\cE_\ell\cap\cX$, $D_{x,\psi}$ has distribution $k_\psi\cdot\Bin((n-|\bigcup_{l\leq\ell} W_l|-O(1))^{k_\psi-1},q_\psi)$,
and the $D_{x,\psi}$ are asymptotically independent.
Analogously, let $d_{x,\psi}$ be the number of constraint nodes $a$ of type $\psi$ that are children $x\in V_\ell$.
Then $d_{x,\psi}$ has distribution $\Po(k_\psi\rho_\psi)$ by the construction of $\T$, and the $d_{x,\psi}$ are mutually independent.
Consequently, since $|W_\ell|=O(1)$ as $n\to\infty$, there exists a coupling of the vectors $(d_{x,\psi})_{x\in W_\ell,\psi\in\Psi}$,
$(D_{x,\psi})_{x\in V_\ell,\psi\in\Psi}$ such that $\pr\brk{\forall x,\psi:d_{x,\psi}=D_{\varphi(x),\psi}|\cX\cap\cE_\ell}=1-o(1)$.
Hence, we obtain a coupling of $\partial^{\ell+1}_{\G_n}x_1$ and $\partial^{\ell+1}\T$ such that
$\pr[[\partial^{\ell+1}_{\G_n}x_1]=\partial^{\ell+1}\T]=1-o(1)$, as desired.

Because the random factor graph model is invariant under permutations of the variable nodes,
the existence of this coupling implies that $\Erw[Q_{T,\ell}(\G_n)]\sim n\pr\brk{\partial^\ell\T=\partial^\ell T}$.
To estimate the second moment $\Erw[Q_{T,\ell}(\G_n)^2]$, we 
repeat the argument from the previous paragraph for the two variable nodes $x_1,x_2$ to show that
	\begin{equation}\label{eqsmm}
	\pr\brk{\partial^\ell_{\G_n}x_1=T,\partial^\ell_{\G_n}x_2=T'}\sim\pr\brk{\partial^\ell\T=\partial^\ell T}\pr\brk{\partial^\ell\T=\partial^\ell T}.
	\end{equation}
Once more by permutation-invariance, (\ref{eqsmm}) implies that $\Erw[Q_{T,\ell}(\G_n)^2]\sim\Erw[Q_{T,\ell}(\G_n)]^2$.
Finally, the desired convergence in probability follows from Chebyshev's inequality.
\end{proof}

\subsection{Replica symmetry}
Having discussed the meaning of ``convergence of the local structure'', let us now return to the convergence of the Gibbs measure $\mu_{\G_n}$ itself.
The ``cavity method'', a non-rigorous but sophisticated approach from statistical physics~\cite{pnas,MM},
predicts a {relatively} simple formula for the free energy~(\ref{eqFreeEnergy})
if $\mu_{\G_n}$ converges to an atom $\atom_w$, $w\in\step_\Omega$, in the metric $\cutm(\nix,\nix)$. 
This convergence assumption roughly coincides with the {\em replica symmetry} condition from physics~\cite{pnas,Panchenko}.
According to the cavity method, in the replica symmetric case the free energy can be calculated by applying an explicit functional, the {\em Bethe free energy}~\cite{YFW},
to a fixed point of a message passing scheme called {\em Belief Propagation}~\cite{MM}.
We are going to vindicate this prediction.

But before we introduce Belief Propagation and the Bethe free energy, let us briefly discuss the replica symmetry assumption.
Formally, we are going to assume that there is a function $w\in\step_\Omega$ such that
	\begin{equation}\label{eqRS}
	\lim_{n\to\infty}\Erw\brk{\cutm(\mu_{\G_n},{\atom_w})}=0.
	 \end{equation}
In other words, $\mu_{\G_n}$ converges to $\atom_w$ in probability with respect to $\cutm(\nix,\nix)$.

The assumption (\ref{eqRS}) holds in all examples of random factor graph models where we currently have an at least somewhat explicit formula for the free energy
	(to our knowledge).
For example, this includes all cases in which the free energy can be computed by the ``second moment method'' (e.g., \cite{nae,ANP,Nor}).
Indeed,  in these examples $w:x\in[0,1)\mapsto p\in\cP(\Omega)$ is a {\em constant} function.
However, there are replica symmetric models in which the limiting density $w$ is not constant.

Instead of relying on the second moment method, the condition (\ref{eqRS}) can be checked (and the function $w$ can be computed)
by studying spatial mixing properties of the Gibbs measure; for an example see~\cite{Victor2}.
Let us give a simple generic proof that the {\em non-reconstruction condition},  a spatial mixing property, entails~(\ref{eqRS});
this was predicted in~\cite{pnas}.

For a random factor graph $\G_n$, a variable node $x\in V_n$ and $\ell\geq1$ let
$\nabla_\ell(\G_n,x)$ be the $\sigma$-algebra on $\Omega^{V_n}$ generated by the events
	$\{\SIGMA(y)=\omega\}$ for all $\omega\in\Omega$ and all variables $y$ at distance greater than $2\ell$ from $x$.
Thus, in the measure $\mu_{\G_n}[\nix|\nabla_\ell(G,x)]$ we condition on all the values of all the variable nodes at distance greater than $2\ell$ from $x$.
The random factor graph model has the {\em non-reconstruction property} if
	\begin{align}\label{eqNonReconstruction}
	\lim_{\ell\to\infty}\lim_{n\to\infty}
			\Erw\bck{\TV{\mu_{\marg x_1}-\mu_{\marg x_1}\brk{\nix|\nabla_\ell(\G_n,x_1)}}}_{\mu_{\G_n}}=0.
	\end{align}
To parse (\ref{eqNonReconstruction}), we note that the outer expectation $\Erw[\nix]$ refers to the choice of the random factor graph $\G_n$.
Further, the outer mean $\bck{\nix}_{\G_n}$ over the Gibbs measure of $\G_n$ generates the random boundary condition.
We then compare the conditional marginal  $\mu_{\marg x_1}\brk{\nix|\nabla_\ell(\G,_nx_1)}$ 
given the boundary condition with the unconditional marginal $\mu_{\marg x_1}$.
Because the distribution of $\G_n$ is invariant under permutations of the variables, the choice of the variable $x_1$ in (\ref{eqNonReconstruction})
is irrelevant.
Hence, (\ref{eqNonReconstruction}) provides that the impact of a random boundary condition on the marginal of
any specific variable $x_i$ diminishes in the limit $\ell,n\to\infty$.

\begin{proposition}\label{Thm_nonre}
Assume that $\lim_{n\to\infty}\Erw[\cutm(\mu_{\G_n},\mu)]=0$ for some $\mu\in M_\Omega$.
If (\ref{eqNonReconstruction}) holds,
then there exists $w\in\step_\Omega$ such that $\bar\mu=\overline{\atom_w}$; thus, (\ref{eqRS}) holds.
\end{proposition}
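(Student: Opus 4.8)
The plan is to show that the hypothesis $\lim_{n\to\infty}\Erw[\cutm(\mu_{\G_n},\mu)]=0$ together with non-reconstruction forces the limit $\mu$ to be (equivalent to) a Dirac measure $\atom_w$, by verifying condition (iii) of \Cor~\ref{Cor_factorise}, i.e.\ asymptotic pairwise independence of the marginals. Since $\cutm$-convergence only identifies $\mu$ up to equivalence, it suffices to produce \emph{some} $w\in\step_\Omega$ with $\cutm(\mu,\atom_w)=0$; by \Cor~\ref{Cor_factorise} this is equivalent to
\[
\lim_{n\to\infty}\Erw\TV{\mu_{\G_n\marg\vec x_1,\vec x_2}-\mu_{\G_n\marg\vec x_1}\tensor\mu_{\G_n\marg\vec x_2}}=0,
\]
where $\vec x_1,\vec x_2$ are independent uniform coordinates (i.e.\ two independent uniformly random variable nodes of $\G_n$). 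Strictly, \Cor~\ref{Cor_factorise} is stated for deterministic $\Cutm$-convergent sequences; since here $\cutm$-convergence holds only in expectation over the random graph, I would first pass to a subsequence along which $\cutm(\mu_{\G_n},\mu)\to0$ almost surely, or simply reprove the easy direction (iii)$\Rightarrow$(i) of \Cor~\ref{Cor_factorise} in expectation, using \Cor~\ref{Cor_sampling} in its averaged form and the continuity of $\tensor$.

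The core estimate is the standard two-point correlation bound via spatial decoupling. Fix two independent uniform variable nodes $x_1,x_2$. With high probability over $\G_n$ (this is where \Prop~\ref{Prop_lwc} and the local tree structure enter), the distance between $x_1$ and $x_2$ exceeds $4\ell$, so the depth-$2\ell$ neighborhoods of $x_1$ and $x_2$ are disjoint; moreover $\partial^\ell\G_{n\reroot x_1}$ is (asymptotically) distributed like the Galton--Watson tree $\T$. On the event that the two neighborhoods are disjoint, I condition on the boundary configuration $\SIGMA$ on all variables at distance $>2\ell$ from both $x_1$ and $x_2$: by the Markov (factor-graph) property, given this boundary the marginals at $x_1$ and at $x_2$ become \emph{conditionally independent}, so
\[
\mu_{\G_n\marg x_1,x_2}[\nix\mid\nabla]
=\mu_{\G_n\marg x_1}[\nix\mid\nabla]\tensor\mu_{\G_n\marg x_2}[\nix\mid\nabla].
\]
Integrating over the random boundary $\SIGMA$ drawn from $\mu_{\G_n}$ and applying the triangle inequality for $\TV\cdot$, the error $\TV{\mu_{\G_n\marg x_1,x_2}-\mu_{\G_n\marg x_1}\tensor\mu_{\G_n\marg x_2}}$ is bounded (up to a constant factor and the negligible probability of the bad geometric event) by
\[
\Erw\bck{\TV{\mu_{\marg x_1}-\mu_{\marg x_1}[\nix\mid\nabla_\ell(\G_n,x_1)]}}_{\mu_{\G_n}}
+\Erw\bck{\TV{\mu_{\marg x_2}-\mu_{\marg x_2}[\nix\mid\nabla_\ell(\G_n,x_2)]}}_{\mu_{\G_n}}.
\]
By permutation invariance both terms equal the quantity appearing in (\ref{eqNonReconstruction}). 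Taking $n\to\infty$ and then $\ell\to\infty$ and invoking non-reconstruction, the right-hand side tends to $0$, which gives the desired factorization and hence $\bar\mu=\overline{\atom_w}$ with $w:x\mapsto\mu_{\marg x}$.

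The main obstacle is making the conditional-independence step fully rigorous: one must be careful that $\nabla_\ell(\G_n,x_1)$ as defined conditions on variables at distance $>2\ell$ from $x_1$ \emph{only}, whereas the clean product formula requires conditioning on a boundary separating \emph{both} roots from the rest of the graph. On the high-probability event that the $2\ell$-neighborhoods of $x_1$ and $x_2$ are disjoint and each neighborhood is a tree (which holds with probability $1-o(1)$ by \Prop~\ref{Prop_lwc} and a first-moment bound on short cycles through a uniform vertex), the two separating boundaries are disjoint, so conditioning on $\nabla_\ell(\G_n,x_1)\vee\nabla_\ell(\G_n,x_2)$ does decouple the two marginals; then one checks that adding the extra conditioning on $\nabla_\ell(\G_n,x_2)$ to the $x_1$-marginal only helps (a further application of the triangle inequality and the tower property, again controlled by (\ref{eqNonReconstruction})). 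A secondary technical point is exchanging the order of the limits in $n$ and $\ell$ with the expectation over $\G_n$ and the averaging $\bck{\nix}_{\mu_{\G_n}}$; this is routine since all quantities are bounded by $2$, so dominated convergence applies. Once (iii) of \Cor~\ref{Cor_factorise} is established in this averaged sense, the conclusion is immediate.
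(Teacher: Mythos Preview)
Your proposal is essentially correct and follows the same overall strategy as the paper: reduce via \Cor~\ref{Cor_factorise} and permutation invariance to asymptotic pairwise factorization of the marginals at two uniformly random variable nodes $x_1,x_2$, then exploit that $x_1,x_2$ are far apart with high probability to deduce this from non-reconstruction.

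The paper's execution of the key step is a bit slicker than yours. Arguing by contradiction, it observes that if pairwise factorization fails then conditioning on the single event $\{\SIGMA_{x_2}=\omega_2\}$---which is $\nabla_\ell(\G_n,x_1)$-measurable once $\mathrm{dist}(x_1,x_2)>2\ell$---already shifts $\mu_{\marg x_1}(\omega_1)$ by a definite amount; restricting the boundary average in (\ref{eqNonReconstruction}) to this event then bounds the non-reconstruction quantity from below by $\eps^3$. This avoids invoking the factor-graph Markov property and the tower argument you outline, and only needs $\mathrm{dist}(x_1,x_2)>2\ell$ rather than $4\ell$.

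One notational slip in your write-up: when the two $2\ell$-balls are disjoint, the join $\nabla_\ell(\G_n,x_1)\vee\nabla_\ell(\G_n,x_2)$ is the \emph{full} $\sigma$-algebra (every variable is far from at least one of $x_1,x_2$), so conditioning on it fixes $\SIGMA$ entirely. The $\sigma$-algebra you actually want---and correctly describe in words just before---is generated by the variables at distance $>2\ell$ from \emph{both} $x_1$ and $x_2$; this is a sub-$\sigma$-algebra of each $\nabla_\ell(\G_n,x_i)$, which is exactly what makes your tower/Jensen step go through.
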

\begin{proof}
We apply an argument from~\cite{Mossel} developed for the ``stochastic block model'' to our setup.
Due to \Cor~\ref{Cor_factorise} and because the distribution of $\G_n$ is invariant under permutations of the variables it suffices to prove 
	\begin{align}\label{eqThm_nonre1}
	\lim_{n\to\infty}\Erw\TV{\mu_{\G_n\marg x_1,x_2}-\mu_{\G_n\marg x_1}\tensor\mu_{\G_n\marg x_2}}=0.
	\end{align}
Hence, assume that (\ref{eqNonReconstruction}) holds but (\ref{eqThm_nonre1}) does not.
Then there exist $\omega_1,\omega_2\in\Omega$ and $0<\eps<0.1$ such that for infinitely many $n$ we have
	\begin{align}\label{eqThm_nonre2}
	\pr\brk{\abs{\bck{\vecone\{\SIGMA_{x_1}=\omega_1\}|\SIGMA_{x_2}=\omega_2}_{\mu_{\G_n}}-
		\bck{\vecone\{\SIGMA_{x_1}=\omega_1\}}_{\mu_{\G_n}}
		}>2\eps,\ \bck{\vecone\{\SIGMA_{x_2}=\omega_2\}}_{\mu_{\G_n}}>2\eps}>2\eps.
	\end{align}
Thus, let $\ell$ be a large enough integer and let $\cE$ be the event that the distance between $x_1,x_2$ in $\G_n$ is greater than $2\ell$.
Because our factor graph $\G_n$ is sparse and random, we have $\pr\brk{\cE}=1-o(1)$ as $n\to\infty$.
Therefore, (\ref{eqThm_nonre2}) implies
	\begin{align}\label{eqThm_nonre3}
	\pr\brk{\abs{\bck{\vecone\{\SIGMA_{x_1}=\omega_1\}|\SIGMA_{x_2}=\omega_2}_{\mu_{\G_n}}-
		\bck{\vecone\{\SIGMA_{x_1}=\omega_1\}}_{\mu_{\G_n}}}>\eps,\ 
		\bck{\vecone\{\SIGMA_{x_2}=\omega_2\}}>\eps,
		\ \cE}>\eps.
	\end{align}
To complete the proof, let $\cS$ be the set of all $\sigma\in\Omega^{V_n}$ such that $\sigma_{x_2}=\omega_2$.
If the event $\cE$ occurs, then given $\nabla_\ell(\G_n,x_1)$ the value assigned to $x_2$ is fixed.
Therefore, (\ref{eqThm_nonre3}) implies
	\begin{align*}
	\Erw\bck{\TV{\mu_{\marg x_1}-\mu_{\marg x_1}\brk{\nix|\nabla_\ell(\G_n,x_1)}}}_{\mu_{\G_n}}
		&\geq\Erw\brk{\vecone\{\cE\}\bck{\TV{\mu_{\marg x_1}-\mu_{\marg x_1}\brk{\nix|\nabla_\ell(\G_n,x_1)}}|\cS}_{\mu_{\G_n}}
			\bck{\vecone\{\SIGMA\in\cS\}}_{\mu_{\G_n}}}\geq\eps^3,
	\end{align*}
in contradiction to (\ref{eqNonReconstruction}).
\end{proof}

Assumption (\ref{eqNonReconstruction}), and hence (\ref{eqRS}),
is a weaker than the assumption of {\em Gibbs uniqueness}, another spatial mixing property.
Under this stronger assumption Dembo, Montanari, and Sun \cite{dembo} used an interpolation scheme to compute the free energy in a wide variety 
of factor models on graphs converging locally to random trees.
Further related work on Gibbs uniqueness and/or the interpolation method includes~\cite{bayati,David2,David}.

Although non-reconstruction is sufficient for (\ref{eqRS}) to hold, it is not a necessary condition.
Yet there are quite a few examples of random factor graph models where the condition is expected (or known)
	to hold but where the free energy has not been computed rigorously
	(e.g., the random $k$-SAT model~\cite{pnas} or planted models~\cite{FlorentLenka}).
We expect that these could be tackled via \Prop~\ref{Thm_nonre} and the other results in this section.

\subsection{Belief Propagation}
To establish a connection between the Gibbs measure $\mu_{\G_n}$ and the limit $\thet$ of the local structure of the factor graph
we are going to use the Belief Propagation scheme, which plays a key role in the physicists' ``cavity method''~\cite[\Chap~14]{MM}.
In fact, due to the Poisson structure of the tree distribution $\thet$, Belief Propagation takes a relatively simple form in our setting.

If we look at the random graph $\G_n$, then by \Prop~\ref{Prop_lwc} $\thet$ gives the fraction of variable nodes $x_i$
such that $\partial^\ell_{\G_n}T\ism\partial^\ell T$ for every tree $T$.
Suppose that for each tree $T$ we record the empirical distribution of the marginals $\mu_{\G_n\marg x_i}$ of such variable nodes.
Since each marginal is a distribution on $\cP(\Omega)$, this empirical distribution lies in $\cP^2(\Omega)=\cP(\cP(\Omega))$.
Hence, we obtain a map $\fT\to\cP^2(\Omega)$.
According to the Belief Propagation equations, this map must satisfy a certain ``consistency condition''.
To be specific,  for a variable node $v$ of $T$ rooted at $r_T$ let $\partial_{T\downarrow}v$ be the set of all children of $v$.
Moreover, let $T_{\downarrow v}$ be the tree ``pending on $v$'', i.e., the connected component of $v$
in the tree obtained from $T$ by removing the neighbor $a$ on the path from $v$ to $r_T$.
Then the marginal distribution of $T$ must be ``consistent'' with the marginal distributions of the trees $T_{\downarrow v}$
for $v$ at distance exactly two from the root $r_T$.

To formalize this, 
we call a measurable map $\nu^\star:\fT\to\cP^2(\Omega)$, $T\mapsto\nu_{T}^\star$ a {\em $\thet$-Belief Propagation fixed point} 
if the following condition holds for $\thet$-almost all trees $T\in\fT$.
Independently for each variable $y$ at distance precisely two from $r_T$ choose
$\eta_{T,y}\in\cP(\Omega)$ from the distribution $\nu_{T_{\downarrow y}}^\star$.
Moreover, for each constraint node $a\in\partial_Tr_T$ let
	\begin{align}
		\hat\eta_{T,a}(\omega)&\propto
		{\sum_{\sigma\in\Omega^{\partial_Ta}}
			\vecone\{\sigma_{r_T}=\omega\}\psi_a(\sigma)\prod_{y\in\partial_{T\downarrow}a}\eta_{T,y}(\sigma_y)}
			&(\omega\in\Omega).
			\label{eqmsg2}
	\end{align}
Then we require that 
	\begin{align}			\label{eqmsg1}
	\eta_{r_T}(\omega)&\propto{\prod_{a\in\partial_T r_T}\hat\eta_{T,a}(\omega)}&(\omega\in\Omega)
	\end{align}
has distribution  $\nu_{T}^\star$.
The idea behind (\ref{eqmsg2})--(\ref{eqmsg1}) is that the 
marginal distribution of the spin of the root variable behaves as though the
the spins assigned to the roots of the subtrees were independent.
For a detailed derivation of the Belief Propagation equations see~\cite[\Chap~14]{MM}.

We would like to show that under the assumption (\ref{eqRS}) the marginals of the Gibbs measure $\mu_{\G_n}$ ``converge to'' a Belief Propagation fixed point.
To this end, we define for a tree $T\in\fT$, an integer $\ell\geq0$ and a factor graph $\G_n$ the distribution
	$\nu_{\G_n,T,\ell}\in\cP(\Omega)^2$ as follows.
Let $V(\G_n,T,\ell)$ be the set of all $x_i\in V_n$ such that $\partial_{\G_n}^\ell x_i\ism\partial^\ell T$.
If $V(\G_n,T,\ell)\neq\emptyset$ we let
	$$\nu_{\G_n,T,\ell}=\frac1{|V(\G_n,T,\ell)|}\sum_{x\in V(\G_m,T,\ell)}\atom_{\mu_{\G_n\marg x}}.$$
Thus, $\nu_{\G_n,T,\ell}$ is the empirical distribution of the Gibbs marginals $\mu_{\G_n\marg x}$ for $x\in V(\G_n,T,\ell)$.
If $V(\G_n,T,\ell)=\emptyset$, we let $\nu_{\G_n,T,\ell}$ be the uniform distribution on $\cP(\Omega)$, say.
Recall that $d_1(\nix,\nix)$ denotes the $L_1$-Wasserstein metric.

\begin{theorem}\label{Thm_BP}
If (\ref{eqRS}) holds, then there exists a $\thet$-Belief Propagation fixed point $\nu^\star_{\T}$ such that
	\begin{equation}
	\label{eqThm_BP}
	\lim_{\ell\to\infty}\lim_{n\to\infty}\Erw_{\G_n,\T}[d_1(\nu_{\G_n,\T,\ell},\nu_{\T}^\star)]=0.
	\end{equation}
\end{theorem}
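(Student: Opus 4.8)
The plan is to combine the replica-symmetry hypothesis (\ref{eqRS}), the local weak convergence $\Lambda_n\to\atom_\thet$ from \Prop~\ref{Prop_lwc}, and the factorisation property of \Cor~\ref{Cor_factorise}. The guiding heuristic is that, since $\mu_{\G_n}\to\atom_w$ in $\cutm$, the Gibbs measure restricted to any bounded set of variable nodes is asymptotically the product of its own marginals; applied to a radius-$2\ell$ neighbourhood of a variable node --- which by \Prop~\ref{Prop_lwc} is with high probability a tree isomorphic to a truncation of $\T$ --- this forces the marginal of the central variable to obey the Belief Propagation recursion (\ref{eqmsg2})--(\ref{eqmsg1}) with the marginals on the boundary as inputs.

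The first step is to extract the limiting object. Since $\Omega$ is finite, $\cP(\Omega)$ is a compact simplex and hence $\cP^2(\Omega)=\cP(\cP(\Omega))$ is compact as well. By \Prop~\ref{Prop_lwc} the laws of the random pairs $(\G_{\reroot\vec x},\mu_{\G_n\marg\vec x})$ --- with $\vec x$ a uniformly random variable node of $\G_n$ --- form a tight family on $\fG\times\cP(\Omega)$ whose first marginal converges to $\thet$, so along a subsequence they converge to a probability measure that is supported on $\fT\times\cP(\Omega)$ and disintegrates as $\int\thet(\mathrm dT)\,\atom_T\tensor\nu^\star_T$ for a measurable kernel $T\mapsto\nu^\star_T\in\cP^2(\Omega)$; this is the candidate fixed point. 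For conditional versions it follows that for every $T$ and $\ell$ with $\pr\brk{\partial^\ell\T=\partial^\ell T}>0$ the conditional law of $\mu_{\G_n\marg\vec x}$ given $\partial^\ell\G_{\reroot\vec x}\ism\partial^\ell T$ converges in $d_1$ to $\nu^\star_{\partial^\ell T}:=\Erw[\nu^\star_\T\mid\partial^\ell\T=\partial^\ell T]$.

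Next I would verify the fixed point equations. Let $x$ be a variable node whose radius-$2\ell$ neighbourhood in $\G_n$ is a tree isomorphic to $\partial^\ell T$, let $a_1,\dots,a_d$ be the constraint nodes incident to $x$ and $y_1,y_2,\dots$ the variable nodes at distance $2$ from $x$. Splitting the weight functions of $\G_n$ into those inside this ball and those outside, $\mu_{\G_n}$ restricted to the ball is proportional to $\prod_i\psi_{a_i}$ times the marginal on $(y_j)_j$ of the Gibbs measure of the factor graph obtained by deleting the interior of the ball. By \Cor~\ref{Cor_factorise} --- applicable since removing $O(1)$ nodes does not affect convergence to an atom --- this boundary marginal factorises as $\bigotimes_j\mu_{\G_n\marg y_j}$ up to $o(1)$ in total variation on average over $x$, and the cavity marginal of $y_j$ in $\G_n$ with $x$ deleted agrees with $\mu_{\G_n\marg y_j}$ up to $o(1)$ on average. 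Marginalising the resulting product formula onto $x$ reproduces (\ref{eqmsg2})--(\ref{eqmsg1}) with input messages $\eta_{y_j}=\mu_{\G_n\marg y_j}$; and since for a typical such $x$ the subtrees pending on the $y_j$ occupy disjoint regions of $\G_n$, in the empirical distribution over such $x$ the tuple $(\mu_{\G_n\marg y_j})_j$ behaves like independent samples from the laws of the corresponding pending subtrees. Letting $n\to\infty$ and using continuity of the finite-dimensional map (\ref{eqmsg2})--(\ref{eqmsg1}) turns this into the fixed point identity relating $\nu^\star_{\partial^\ell T}$ to $\big(\nu^\star_{\partial^{\ell-1}(T_{\downarrow y_j})}\big)_j$, and, after sending $\ell\to\infty$ as below, into (\ref{eqmsg2})--(\ref{eqmsg1}) for $\nu^\star$ itself.

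To conclude: because the $\sigma$-algebras $\sigma(\partial^\ell\T)$ increase to the Borel $\sigma$-algebra of $\fT$, the martingale convergence theorem gives $\nu^\star_{\partial^\ell\T}\to\nu^\star_\T$, hence, $\cP(\Omega)$ being compact and $d_1$ bounded, $\Erw_\T[d_1(\nu^\star_{\partial^\ell\T},\nu^\star_\T)]\to0$ as $\ell\to\infty$. Combined with a concentration estimate showing that for fixed $T,\ell$ the empirical measure $\nu_{\G_n,T,\ell}$ is within $o(1)$ of its mean in $d_1$ with probability $1-o(1)$ (via Azuma/McDiarmid using the Lipschitz bounds on Gibbs marginals supplied by (\ref{eqRS})), together with the conditional-law convergence of the second paragraph, this yields (\ref{eqThm_BP}). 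The step I expect to be the main obstacle is the decoupling used to derive the fixed point equations: converting ``$\mu_{\G_n}$ factorises over random bounded sets'' into ``the boundary condition of a \emph{specific} tree-like neighbourhood, together with the cavity marginals of its boundary variables, are asymptotically independent with the expected laws'', and controlling the accumulated $o(1)$ errors --- in particular keeping the normalising constants in (\ref{eqmsg2})--(\ref{eqmsg1}) bounded away from $0$, which uses that the $\psi_a>0$ have fixed arity --- uniformly enough that the recursion can be unwound to arbitrary depth before sending $\ell\to\infty$.
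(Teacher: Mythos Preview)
Your overall plan matches the paper's: combine local weak convergence, the factorisation from \Cor~\ref{Cor_factorise}, and martingale convergence over the filtration $(\cF_\ell)_\ell=(\sigma(\partial^\ell\T))_\ell$. The paper, however, does not extract $\nu^\star$ by compactness. It constructs the target explicitly: let $X_0$ be a sample from $\cL(w_{\vec x})$, and for $\ell\ge1$ let $X_\ell(T,\vec\tau)$ be the root marginal of $\partial^\ell T$ under an i.i.d.\ boundary condition on the depth-$2\ell$ leaves drawn from $\cL(w_{\vec x})$; the candidate is $\nu_{T,\ell}=\cL(X_\ell(T,\vec\tau))$, and the key step (\Lem~\ref{Lemma_WillsCoupling}) shows that the empirical distribution $\nu_{\G_n,T,\ell}$ converges to $\nu_{T,\ell}$ for each fixed $T,\ell$. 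The martingale identity $\Erw[\nu_{\T,\ell+1}\mid\cF_\ell]=\nu_{\T,\ell}$ (\Cor~\ref{Lemma_martingale}) then yields $\nu^\star_\T=\lim_\ell\nu_{\T,\ell}$ and the fixed point property.

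There are two places where your sketch would need the paper's device. First, the concentration step via Azuma/McDiarmid is not available: a single constraint of $\G_n$ can change every Gibbs marginal by $\Theta(1)$, so no useful edge-Lipschitz bound holds, and (\ref{eqRS}) does not supply one. The paper sidesteps this by a moment/replica argument: to approximate the moments of $\nu_{\G_n,T,\ell}$ it realises $\G_n$ as $\G'_{n-L}$ plus $L$ fresh variable nodes whose depth-$\ell$ neighbourhoods are conditioned to match $T$; the $L$ new root-marginals are then exchangeable samples whose empirical averages approximate the moments, and convergence in $d_1$ follows from moment convergence on the compact simplex $\cP(\Omega)$. Second, this very construction is what justifies applying \Cor~\ref{Cor_factorise} to the boundary variables: in the paper the variables $\{x_{i,a,y}\}$ attached to the new constraints are by construction \emph{uniformly random} vertices of $\G'_{n-L}$, so (\ref{eqFactorise}) applies verbatim. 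In your argument the boundary variables $y_j$ are the neighbours of a vertex $x$ conditioned to have local structure $T$, and it takes exactly this add-$L$-vertices coupling to see that they behave like uniform samples---this is the decoupling obstacle you flag at the end, and the paper's resolution is the coupling rather than any a-posteriori Lipschitz estimate. A side benefit is that because $\nu_{T,\ell}$ is defined a priori, no subsequence issue arises; your compactness route would still have to argue uniqueness of the subsequential limit.
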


Thus, for large enough $\ell,n$ and for a random tree $\T$ the empirical distribution of the marginals
of those variables of $\G_n$ whose depth-$\ell$ neighborhood is isomorphic to $\partial^\ell\T$ is close to $\nu_{\T}^\star$.
In particular, in the limit $n\to\infty$
the Gibbs measures $\mu_{\G_n}$ on the random factor graph induce a Belief Propagation fixed point on the limiting random tree $\T$.

Panchenko~\cite{Panchenko} proved a related fixed point property under different assumptions.
While he does not require the convergence in probability assumption~(\ref{eqRS}), he imposes certain additional conditions
on the random factor graph model (which may not be strictly necessary).
Moreover, Panchenko's result just provides a ``single-level'' distributional fixed point equation,
rather than a full ``unwrapping'' into a $\thet$-Belief Propagation fixed point.
The proof of \Thm~\ref{Thm_BP} is by extension of the argument from~\cite[\Sec~2]{Panchenko};
in fact, the ``unwrapping'' requires a fair amount of work.

The proof of \Thm~\ref{Thm_BP} yields the following ``geometric'' fact about the interplay of the marginals on the random factor graph.
For a factor graph $G$ and a variable node $x$ let $G-x$ be the factor graph obtained by removing $x$ and its adjacent constraint nodes.

\begin{corollary}
\label{cor:marginalneighbors}
If (\ref{eqRS}) holds then
\begin{align*}
\lim_{n\to\infty}\frac1n\sum_{x\in V_n}\sum_{\omega\in\Omega}\Erw\abs{
	\mu_{\G_n\marg x} (\omega)-
	\frac{   \prod_{a\in\partial_{\G_n} x} \sum_{\vec s:\partial_{\G_n}a\to\Omega} \mathbf 1\cbc{s_x=\omega} \psi_a (\vec s) 
		\prod_{y\in\partial_{\G_n-x}a}\mu_{\G_n{-x}\marg y}(\vec s_y) }
			{  \sum_{\omega^\prime \in \Omega}   \prod_{a\in\partial_{\G_n} x} \sum_{\vec s:\partial_{\G_n}a\to\Omega}
				\mathbf 1\cbc{s_x=\omega^\prime} \psi_a (\vec s) \prod_{y\in\partial_{\G_n-x}a}\mu_{\G_n{-x}\marg y}(\vec s_y)}
	}=0,
\end{align*}
where the expectation is over the choice of the random factor graph $\G_n$. 
\end{corollary}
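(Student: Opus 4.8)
The plan is to combine an exact ``cavity'' identity for the Gibbs marginals with the decorrelation of marginals that the replica-symmetry assumption (\ref{eqRS}) supplies through \Cor~\ref{Cor_factorise}. Because the law of $\G_n$ is invariant under permutations of the variable nodes, it suffices to prove that for every $\omega\in\Omega$
	\begin{align}\label{eqCorPlan1}
	\lim_{n\to\infty}\Erw\abs{\mu_{\G_n\marg x_1}(\omega)-\Phi_{\G_n}(x_1)(\omega)}=0,
	\end{align}
where $\Phi_G(x)(\omega)$ abbreviates the Belief Propagation expression on the right-hand side of the Corollary; write $N=N_{\G_n}(x_1)=\bigcup_{a\in\partial_{\G_n}x_1}(\partial_{\G_n}a\setminus\cbc{x_1})$ for the (random) set of cavity neighbours of $x_1$. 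The point of departure is the elementary identity, valid for every finite factor graph $G$ and variable node $x$: with $N_G(x)=\bigcup_{a\in\partial_Gx}(\partial_Ga\setminus\cbc x)$ and $\vec\tau^\omega$ the extension of $\vec\tau\in\Omega^{N_G(x)}$ that sets the $x$-coordinate to $\omega$,
	\begin{align}\label{eqCorPlan2}
	\mu_{G\marg x}(\omega)\propto\sum_{\vec\tau\in\Omega^{N_G(x)}}\mu_{G-x\marg N_G(x)}(\vec\tau)\prod_{a\in\partial_Gx}\psi_a\bc{\vec\tau^\omega(\partial_Ga)},
	\end{align}
which follows at once from (\ref{eqGibbs1}) by factoring out the constraints incident with $x$, the constant $Z_{G-x}/Z_G$ being absorbed into the proportionality. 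The expression $\Phi_G(x)$ is exactly (\ref{eqCorPlan2}) with the joint law $\mu_{G-x\marg N_G(x)}$ replaced by the product $\bigotimes_{y\in N_G(x)}\mu_{G-x\marg y}$ of one-point marginals (and with the constraints incident with $x$ treated as vertex-disjoint). These substitutions do not alter (\ref{eqCorPlan2}) on the event $\cE$ that no constraint in $\partial_{\G_n}x_1$ meets any variable twice and that the sets $\partial_{\G_n}a\setminus\cbc{x_1}$, $a\in\partial_{\G_n}x_1$, are pairwise disjoint, and $\pr[\cE]=1-O(1/n)$ in the sparse random graph; so we may concentrate on the replacement of $\mu_{\G_n-x_1\marg N}$ by $\bigotimes_{y\in N}\mu_{\G_n-x_1\marg y}$.

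Since $\Psi$ is finite there are $0<\psi_{\min}\le\psi_{\max}<\infty$ with $\psi_{\min}\le\psi_a\le\psi_{\max}$ for all $a$, so on $\cE$ the numerators and denominators of $\mu_{\G_n\marg x_1}(\omega)$ and $\Phi_{\G_n}(x_1)(\omega)$ all lie in $[\psi_{\min}^d,\abs\Omega\psi_{\max}^d]$, where $d=\abs{\partial_{\G_n}x_1}$, whence a short estimate gives
	\begin{align}\label{eqCorPlan3}
	\abs{\mu_{\G_n\marg x_1}(\omega)-\Phi_{\G_n}(x_1)(\omega)}\le C^{d}\,\TV{\mu_{\G_n-x_1\marg N}-\bigotimes_{y\in N}\mu_{\G_n-x_1\marg y}}
	\end{align}
for a constant $C=C(\Omega,\Psi)$. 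Now $d$ is asymptotically $\Po(\sum_\psi k_\psi\rho_\psi)$ and hence has all exponential moments, while the left-hand side of (\ref{eqCorPlan3}) never exceeds $1$; thus, truncating at a large constant $D$, the contribution of $\cbc{d>D}$ and of $\cE^c$ to (\ref{eqCorPlan1}) is at most $\pr[d>D]+o(1)$, which tends to $0$ as $D\to\infty$. Consequently (\ref{eqCorPlan1}) reduces to showing that for every fixed $D$
	\begin{align}\label{eqCorPlan4}
	\lim_{n\to\infty}\Erw\brk{\vecone\cbc{d\le D}\,\TV{\mu_{\G_n-x_1\marg N}-\bigotimes_{y\in N}\mu_{\G_n-x_1\marg y}}}=0.
	\end{align}

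To prove (\ref{eqCorPlan4}) I would condition on the $\sigma$-algebra $\cF$ generated by the constraint nodes incident with $x_1$, their weight functions and their neighbourhoods; on $\cbc{d\le D}$ this freezes the set $N\subseteq V_n\setminus\cbc{x_1}$, which then has some fixed cardinality $\abs N\le D\max_\psi(k_\psi-1)$. Passing to the \Erdos--\Renyi-type model $\G_n'$ from the proof of \Prop~\ref{Prop_lwc} (whose Gibbs measure is at total variation distance $o(1)$ from $\mu_{\G_n}$), conditioning on $\cF$ leaves the constraints avoiding $x_1$ mutually independent and with their original laws, so that, given $\cF$, the graph $\G_n'-x_1$ is a random factor graph on the $n-1$ variables $V_n\setminus\cbc{x_1}$ with Poisson intensities tending to $\rho$, independent of the now fixed set $N$. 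A routine coupling identifies its Gibbs measure, up to $o(1)$ in total variation, with that of $\G_{n-1}=\G_{n-1}(\Psi,\rho)$; hence by permutation invariance the joint marginal $\mu_{\G_{n-1}\marg N}$ is distributed as the joint marginal on $\abs N$ uniformly random coordinates. So the replica-symmetry assumption (\ref{eqRS}) at size $n-1$, via \Cor~\ref{Cor_factorise} together with the passage from the weak metric $\cutm(\nix,\nix)$ to the strong metric $\Cutm(\nix,\nix)$ (harmless here, as only marginals over uniformly random coordinates occur, which are insensitive to reparametrisations of $[0,1)$, exactly as in the proof of \Prop~\ref{Thm_nonre}), yields $\Erw\TV{\mu_{\G_{n-1}\marg N}-\bigotimes_{y\in N}\mu_{\G_{n-1}\marg y}}\to0$ for every fixed $\abs N$; substituting back gives (\ref{eqCorPlan4}), and with it the Corollary.

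The step I expect to be the main obstacle is this last one: transporting the replica-symmetry hypothesis (\ref{eqRS})---a statement about $\G_n$---first to the surgically modified graph $\G_n-x_1$ and then to the joint law of a bounded but randomly located set of coordinates, which demands the precise coupling $\G_n-x_1\approx\G_{n-1}$ and a careful use of \Cor~\ref{Cor_factorise} in the random ``in probability'' setting. The remaining ingredients---the cavity identity (\ref{eqCorPlan2}), the uniform bounds on the $\psi_a$ and the degree truncation---are routine. Alternatively, as the remark preceding the Corollary suggests, (\ref{eqCorPlan1}) can be read off directly from the proof of \Thm~\ref{Thm_BP}, the cavity identity (\ref{eqCorPlan2}) then identifying the Belief Propagation fixed-point relation with the displayed expression.
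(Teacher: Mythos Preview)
Your proof is correct and follows essentially the same route as the paper's. The paper's argument is simply a terse pointer back to the construction in \Lem~\ref{Lemma_WillsCoupling} with $L=1$: there one builds $\G_n$ by adding a single variable node to $\G_{n-1}'$, obtains the exact formula (\ref{eq:marginalFormula}) for the marginal of the new node in terms of the joint law $\mu_{\G_{n-1}'\marg\{x_{a,y}\}}$, and then invokes \Cor~\ref{Cor_factorise} to factorize that joint law---precisely your cavity identity (\ref{eqCorPlan2}) followed by your step (\ref{eqCorPlan4}), viewed from the ``add a node'' rather than the ``remove a node'' side. Your write-up is more self-contained (you re-derive the cavity identity, make the Lipschitz bound (\ref{eqCorPlan3}) and the degree truncation explicit, and spell out the coupling $\G_n-x_1\approx\G_{n-1}$), but the substance is identical, as you yourself note in your closing sentence.
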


\subsection{The Bethe free energy}\label{Sec_BetheFreeEnergy}

According to the ``cavity method'', we can extract $\frac{1}{n}\Erw[\ln Z_{\G_n}]$ from the Belief Propagation fixed point from \Thm~\ref{Thm_BP}
via a formula called the  Bethe free energy~\cite[\Sec~14.2.4]{MM}.
Suppose that $\nu^\star$ is a Belief Propagation fixed point.
For a tree $T\in\fT$ with root $r_T$ and 
$y$ at distance two from $r_T$ let
$\eta_{T,y}\in\cP(\Omega)$ be independently chosen from $\nu_{T_{\downarrow y}}^\star$.
Moreover, for $a\in\partial_T r_T$ define  $\hat\eta_{T,a}\in\cP(\Omega)$ as in (\ref{eqmsg2}).
Further, let
	\begin{align}\label{eqDefBFE1}
	\tilde\eta_{T,a}(\sigma)&\propto{\prod_{b\in\partial_T r_T\setminus\{a\}}\hat\nu_{T,b}(\sigma)},&
		&(\sigma\in\Omega),\\
	\varphi_{T}&=\ln\sum_{\sigma\in\Omega}\prod_{a\in\partial_T r_T}\hat{\eta}_{T,a}(\sigma),&
	\hat\varphi_{T,a}&=\ln\sum_{\sigma\in\Omega^{\partial_Ta}}
		\psi_a(\sigma)\tilde\eta_{T,a}(\sigma_{r_T})\prod_{y\in\partial_{T\downarrow}a}\eta_{T,y}(\sigma_y),\label{eqDefBFE2}\\
	\tilde\varphi_{T,a}&=\ln\sum_{\sigma\in\Omega}\tilde\eta_{T,a}(\sigma)\hat\eta_{T,a}(\sigma).
		\label{eqDefBFE3}
	\end{align}
(The arguments of all the above logarithms are strictly positive.
Indeed, because the functions $\psi\in\Psi$ take strictly positive values, (\ref{eqmsg2}) ensures that $\hat\eta_{T,a}(\omega)>0$ for all $\omega\in\Omega$.
Hence, $\varphi$ is well-defined.
So are $\hat\varphi_a$, $\tilde\varphi_a$, by the same token.)
Taking the expectation over $\T$ and independent $\eta_y\in\cP(\Omega)$ for all $y$ at distance two from $r_T$, we define
the {\em Bethe free energy} as
	\begin{align}\label{eqDefBFE}
	\cB_\thet(\nu_{\T}^\star)&=
		\Erw\brk{\varphi_{\T}+\sum_{a\in\partial_{\T}r_{\T}}\bc{\frac{\hat\varphi_{\T,a}}{k_a}
			-\tilde\varphi_{\T,a}}}.
	\end{align}

\begin{theorem}\label{Thm_Bethe}
Assume that (\ref{eqRS}) holds and let $\nu_{\T}^\star$ be a $\thet$-Belief Propagation fixed point such that (\ref{eqThm_BP}) holds.
Then 
	$$\lim_{n\to\infty}\frac1n\Erw[\ln Z_{\G_n}]=\cB_\thet(\nu^\star_{\T}).$$
\end{theorem}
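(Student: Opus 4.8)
The plan is to exploit the telescoping identity $\ln Z_{\G_n}=\sum_{t=1}^{T}\ln(Z_{G_t}/Z_{G_{t-1}})$ obtained along an interpolation path that builds $\G_n$ one constraint (and, in a separate phase, one variable) at a time, and to identify the expected increment at each step with the corresponding term in the Bethe formula~(\ref{eqDefBFE}). Concretely, I would first fix a large $\ell$ and let $n\to\infty$, using Theorem~\ref{Thm_BP} to guarantee that the empirical distribution of depth-$\ell$ Gibbs marginals of $\G_n$ is close in $d_1$ to the Belief Propagation fixed point $\nu^\star_{\T}$. The key structural input is the standard ``adding a constraint'' computation: if $G'$ is obtained from $G$ by attaching one new constraint node $a$ (with weight function $\psi$) to a uniformly random ordered $k_\psi$-tuple of variable nodes, then
\begin{align*}
\frac{Z_{G'}}{Z_G}=\bck{\psi_a(\SIGMA(\partial a))}_{\mu_G}=\sum_{\sigma\in\Omega^{\partial a}}\psi_a(\sigma)\,\mu_{G\marg\partial a}(\sigma).
\end{align*}
Because $\G_n$ is locally tree-like by Proposition~\ref{Prop_lwc}, the joint marginal $\mu_{G\marg\partial a}$ over the $k_\psi$ random endpoints factorizes, up to $o(1)$, into the product of the single-variable marginals, each of which (by~(\ref{eqRS}) and Theorem~\ref{Thm_BP}, via Corollary~\ref{cor:marginalneighbors}) looks like an independent draw from $\nu^\star$. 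Taking logarithms and averaging over which endpoints are chosen and over $\psi\sim\rho$, the expected contribution of the ``constraint phase'' converges to $\Erw[\sum_{a}\hat\varphi_{\T,a}/k_a]$ after the appropriate bookkeeping of the Poisson weights $\rho_\psi$ (the $1/k_a$ arises exactly because each constraint of arity $k_a$ is counted $k_a$ times when we sum over root variables). Symmetrically, the ``variable phase'', in which one adds a fresh variable node together with its Poisson-many adjacent half-edges, contributes $\Erw[\varphi_{\T}-\sum_a\tilde\varphi_{\T,a}]$, where the $\tilde\varphi$ term compensates for the half-edges that were double counted.

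The heart of the argument is therefore a \emph{two-phase interpolation}: interpolate from the empty graph on $V_n$ to $\G_n$ by first throwing in all constraint nodes (attached to random tuples), and then accounting for the normalization coming from the variables, tracking the derivative/increment of $\frac1n\Erw[\ln Z]$ at each step. One must check that the error terms — coming from (i) cycles in the depth-$\ell$ neighborhood of a random endpoint, (ii) the $d_1$-approximation in Theorem~\ref{Thm_BP}, and (iii) the replacement of the true marginals by BP-fixed-point samples — are all $o(1)$ when first $n\to\infty$ and then $\ell\to\infty$. The uniform positivity of the $\psi\in\Psi$ guarantees all logarithms are bounded, so dominated convergence applies and these approximations are legitimate; this is also why the Bethe formula is well-defined, as noted after~(\ref{eqDefBFE3}).

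The main obstacle I anticipate is showing that the increments concentrate and that the ``effective independence'' of neighboring marginals is strong enough to pass to the $\ln$. Replica symmetry~(\ref{eqRS}) controls $\cutm(\mu_{\G_n},\atom_w)$, which yields decorrelation of \emph{pairs} of marginals only in an averaged ($L_1$-Wasserstein) sense; to handle a product of $k_\psi$ marginals inside a logarithm one needs the joint factorization of $\mu_{G\marg\partial a}$ over a random tuple, which combines the pairwise decorrelation from Corollary~\ref{Cor_factorise}/Corollary~\ref{cor:marginalneighbors} with the local-tree structure of Proposition~\ref{Prop_lwc} and a union bound over the finitely many constraint types and spin patterns. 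A second, more technical, obstacle is the precise matching of the Poisson bookkeeping: one must verify that summing the per-constraint and per-variable increments, with the correct multiplicities $\rho_\psi$ and arities $k_a$, reproduces exactly the combination $\varphi_{\T}+\sum_{a}(\hat\varphi_{\T,a}/k_a-\tilde\varphi_{\T,a})$ in~(\ref{eqDefBFE}); here the Chen–Stein identity~(\ref{eqmemoryless}) is the natural tool to move the expectation over the Poisson number of constraints incident to the root into the ``$+1$'' shift that produces the tree $\T$ with one extra pendant constraint. Once these two points are settled, the theorem follows by letting $n\to\infty$ and then $\ell\to\infty$.
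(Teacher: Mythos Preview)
Your proposal contains the right local ingredients (the ``add a constraint'' identity $Z_{G'}/Z_G=\bck{\psi_a(\SIGMA(\partial a))}_{\mu_G}$, the factorization from Corollary~\ref{Cor_factorise}, the Chen--Stein bookkeeping, boundedness of $\ln\psi$), but the global architecture has a genuine gap. You propose to interpolate from the empty graph on $V_n$ to $\G_n$ by throwing in the constraints one at a time. The problem is that assumption~(\ref{eqRS}) is only given at the \emph{target} density $\rho$; along your path the intermediate graphs $G_t$ have constraint density $t\rho/T$ ranging over all of $[0,\rho]$, and for these you have no control whatsoever over the Gibbs marginals or their factorization. Hence the increment $\Erw\ln(Z_{G_t}/Z_{G_{t-1}})$ is not approximately equal to a fixed Bethe term; it genuinely depends on $t$, and summing $\Theta(n)$ such unknown increments gives you nothing. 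Your ``variable phase'' is also unclear: once you have added all constraints to the empty graph on $V_n$ you already have $\G_n$, so there is no second phase left that could produce the $\varphi_{\T}-\sum_a\tilde\varphi_{\T,a}$ contribution.

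The paper avoids this by using the Aizenman--Sims--Starr scheme: write $\frac1n\Erw[\ln Z_{\G_n}]=\frac1n\sum_{h=1}^n\Erw\ln(Z_{\G_h}/Z_{\G_{h-1}})$ and show that the \emph{single} increment $\Erw\ln(Z_{\G_{n+1}}/Z_{\G_n})$ converges to $\cB_\thet(\nu^\star)$. The point is that $\G_n$ and $\G_{n+1}$ are coupled through a common intermediate graph $\G'$ (remove $x_{n+1}$ and its constraints from $\G_{n+1}$), and $\G'$ is within $o(1)$ total variation of $\G_n$, so (\ref{eqRS}) applies to it. Then $\Erw\ln(Z_{\G_{n+1}}/Z_{\G'})$ comes from attaching one variable with its $\Po(k_\psi\rho_\psi)$ constraints and yields the $\varphi+\hat\varphi-\tilde\varphi$ terms, while $\Erw\ln(Z_{\G_n}/Z_{\G'})$ comes from adding $O(1)$ extra constraints and yields the residual $\hat\varphi$ term; subtracting gives exactly~(\ref{eqDefBFE}). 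A further simplification you are missing: because $\Erw[\nu^\star_{\T}]=\cL(w_{\vec x})$ (Claim~\ref{Claim_avgMarg}), the computation can be carried out directly with $w$ rather than with $\nu^\star$ and a depth parameter $\ell$, so no $\ell\to\infty$ limit is needed.
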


Hence, \Thm s~\ref{Thm_BP} and~\ref{Thm_Bethe} show that under the assumption (\ref{eqRS}) the free energy of the random
factor graph model comes out in terms of a ``geometric'' measure, i.e., a Belief Propagation fixed point on the limiting tree
that captures the local structure of the factor graph.

The proof of \Thm~\ref{Thm_Bethe} is by adapting ideas from Panchenko~\cite[\Sec~2]{Panchenko} to our situation.
In particular, we combine the convergence of the measure $\mu_{\G_n}$ with
a technique from Aizenman, Simms, Starr~\cite{Aizenman}.
The difference between \Thm~\ref{Thm_Bethe} and \cite{Panchenko} is that the latter requires
certain conditions on the weight functions $\psi\in\Psi$ (to facilitate an interpolation argument) but does not require (\ref{eqRS}).
(However, it is stated without proof in~\cite{Panchenko} that the free energy can be derived along the lines of that paper under the assumption
(\ref{eqRS}).)

\subsection{Proof of \Thm~\ref{Thm_BP}}\label{Sec_BP}
We begin by constructing  a family of $\cP(\Omega)$-valued random variables $(X_\ell)_\ell$ as follows.
Let $\vec\tau=(\vec\tau_i)_{i\geq1}$ be an i.i.d.\ family of $\cP(\Omega)$-valued random variables with distribution $w_{\vec x}$ for a
uniformly random $\vec x\in[0,1)$.
Then $X_\ell=X_\ell(\T,\vec\tau)$ is defined as follows.
We simply set $X_0(\T,\vec\tau)=\vec\tau_1$.
Further, to define $X_\ell(\T,\vec\tau)$ for  $\ell\geq1$ let $V_{\ell}=\{x_{\ell,1},\ldots,x_{\ell,L}\}$ be the set of variable nodes at distance
precisely $2\ell$ from the root $r_{\T}$, ordered in some arbitrary but deterministic way.
Then we let  $X_\ell(\T,\vec\tau)$ be the distribution of the spin $\SIGMA_{r_{\T}}$ under
the Gibbs measure of the tree $\partial^\ell\T$ with a boundary condition chosen independently from $\TAU_1,\ldots,\TAU_L$.
In symbols,
	\begin{align*}
	X_\ell(\T,\vec\tau)&=\sum_{\sigma\in\Omega^L}
		\brk{\atom_{\bck{\SIGMA_{r_{\T}}(\nix)|\SIGMA(x_{\ell,1})=\sigma_1,\ldots,\SIGMA(x_{\ell,L})=\sigma_L}_{\mu_{\partial^\ell\T}}}
			\prod_{i\leq L}\TAU_i(\sigma_i)}
			\in\cP(\Omega).
	\end{align*}

Now we give an alternative construction.
Again set $X_0=\vec\tau_1$.
Now for $\ell \ge 1$, let $X_\ell(\T,\vec\tau)$ be the distribution of the spin at the root given that its neighbors have
 distribution $X_{\ell-1}(\T_1, \vec \tau^{1}), X_{\ell-1}(\T_2, \vec \tau^{2}), \dots$ 
 where $\T_1$ is the tree appending the $i$th neighbor of the root of $\T$, and 
 $\vec \tau^1, \vec \tau^2, \dots$ are independent copies of $\vec\tau$.  Observe that the two constructions do in fact give the same distribution.    

The main step of the proof is to compare $X_\ell$ with the empirical distribution on the random factor graph.
Recall that $d_1(\nix,\nix)$ denotes the Wasserstein metric.

\begin{lemma}\label{Lemma_WillsCoupling}
For every $T\in\fT$ the following is true.
Let $Y_\ell(T,\G_n)$ be the empirical distribution of the marginals of the variables $x$ such that $\partial^\ell_{\G_n}x\ism\partial^\ell T$.
Then
	\begin{equation}\label{eq_WillsCoupling1}
	\lim_{n\to\infty}\Erw[d_1(Y_\ell(T,\G_n),X_\ell(T,\vec\tau))]=0.
	\end{equation}
\end{lemma}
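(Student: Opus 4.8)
The plan is to prove $(\ref{eq_WillsCoupling1})$ by induction on $\ell$, combining the recursive description of $X_\ell$ with the exact tree recursion (Belief Propagation) on $\G_n$. Throughout I would exploit that $\cP(\Omega)$ is a finite-dimensional simplex, so that $d_1$-convergence on $\cP^2(\Omega)$ amounts to convergence of all polynomial test functionals $\nu\mapsto\int\prod_{l\le k}\tau(\omega_l)\,d\nu(\tau)$, and that such a functional applied to the empirical law $Y_\ell(T,\G_n)$ is an averaged spin statistic of the Gibbs measure, hence accessible through the Aldous--Hoover array of Corollary~\ref{Cor_AldousHoover}. Since $d_1$ is bounded on $\cP^2(\Omega)$, it suffices to prove convergence in probability of these functionals to their (deterministic) limits.

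For the base case $\ell=0$ we have $X_0(T,\vec\tau)=\vec\tau_1$, whose law is the image of Lebesgue measure under $x\mapsto w_x$, whereas $Y_0(T,\G_n)=n^{-1}\sum_{x\in V_n}\atom_{\mu_{\G_n\marg x}}$. Because the rows of the array are conditionally independent given the column samples,
	\begin{equation*}
	\Erw\Big[n^{-1}\textstyle\sum_{x\in V_n}\prod_{l\le k}\mu_{\G_n\marg x}(\omega_l)\Big]
		=\pr\brk{\vec A_{1,1}(\overline{\mu_{\G_n}})=\omega_1,\dots,\vec A_{k,1}(\overline{\mu_{\G_n}})=\omega_k},
	\end{equation*}
which by $(\ref{eqRS})$ and Corollary~\ref{Cor_AldousHoover} converges to $\int_0^1\prod_{l\le k}w_x(\omega_l)\,dx$; the same identity with two independent blocks of rows and columns bounds the variance, so $Y_0(T,\G_n)\to\cL(X_0(T,\vec\tau))$ in $d_1$, in probability and hence in $L^1$.

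For the inductive step I assume $(\ref{eq_WillsCoupling1})$ for $\ell$ and every tree, fix $T$, and analyse a variable $x$ of $\G_n$ with $\partial_{\G_n}^{\ell+1}x\ism\partial^{\ell+1}T$. With probability $1-o(1)$ the ball $\partial_{\G_n}^{\ell+1}x$ is a tree, the constraint neighbours $b_1,\dots,b_d$ of $x$ carry the weight functions dictated by $T$, and each other variable $z$ of $b_i$ satisfies $\partial_{\G_n-x}^{\ell}z\ism\partial^{\ell}T_{\marg y_z}$ for the matching grandchild $y_z$ of $r_T$. Since these variables separate $x$'s neighbourhood from the rest of $\G_n$, the exact Gibbs-on-a-tree identity writes $\mu_{\G_n\marg x}$ as an explicit continuous functional of the $\psi_{b_i}$ and of the joint cavity laws of the measures $\mu_{\G_n-x}$ on the variables of each $b_i$ other than $x$; by Corollary~\ref{Cor_factorise} applied to $\mu_{\G_n-x}$ --- which still obeys $(\ref{eqRS})$ and has the same local weak limit, since deleting $x$ after conditioning on its $O(1)$-neighbourhood perturbs only $O(1)$ nodes --- these joint laws are, on average over $x$, $o(1)$-close in total variation to the products of their one-variable marginals. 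Hence, up to an error that vanishes after averaging over $x$ and over $\G_n$, $\mu_{\G_n\marg x}=F_T\big((\mu_{\G_n-x\marg z})_z\big)$, where $F_T$ is precisely the map $(\ref{eqmsg2})$--$(\ref{eqmsg1})$, Lipschitz on the compact product of simplices because the $\psi$'s are strictly positive. It then remains to show that the empirical law of the tuples $(\mu_{\G_n-x\marg z})_z$ over $x\in V(\G_n,T,\ell+1)$ converges to $\bigotimes_z\cL(X_\ell(T_{\marg y_z},\vec\tau))$: the grandchildren attached to a given $x$ are distinct and, conditionally on the shape of the ball, ``spread out'' over $V_n$, so Corollaries~\ref{Cor_factorise}--\ref{Cor_sampling} make the joint empirical law asymptotically factorize over $z$, while the induction hypothesis applied to $\G_n-x$ and the trees $T_{\marg y_z}$ identifies each factor with $\cL(X_\ell(T_{\marg y_z},\vec\tau))$. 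Pushing this product forward through the continuous map $F_T$ and invoking the recursive definition of $X_{\ell+1}$ completes the induction.

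The main obstacle will be the decorrelation underlying this last step: I must show that conditioning a variable $z$ to sit in a prescribed finite neighbourhood --- in particular, to be the grandchild of a variable whose ball is isomorphic to $\partial^{\ell+1}T$ --- does not alter the limiting law of its Gibbs marginal, so that the induction hypothesis (which a priori controls the empirical marginal law over \emph{all} variables of a given local shape) may legitimately be invoked on this structured sub-family, and likewise that the marginals of the several grandchildren of one $x$, and of grandchildren of distinct $x$'s, are asymptotically independent despite their structural linkage. This is exactly the point at which the hypothesis that $\mu_{\G_n}$ converges to the \emph{single} atom $\atom_w$, rather than to a nontrivial mixture, is used. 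I expect it to be handled by the permutation-invariance and second-moment arguments already employed for Proposition~\ref{Prop_lwc}, together with a careful transfer of $(\ref{eqRS})$ and of the induction hypothesis from $\G_n$ to the cavity graphs $\G_n-x$; this bookkeeping, rather than any new idea, is where the bulk of the work lies.
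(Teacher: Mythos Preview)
Your approach---induction on $\ell$, the Belief Propagation recursion, and the asymptotic factorisation from Corollary~\ref{Cor_factorise}---is essentially the paper's. The paper resolves the decorrelation obstacle you identify by an \emph{additive} cavity construction: to sample $L$ uniform variables with prescribed depth-$\ell$ neighbourhood it adds $L$ fresh variable nodes (with the appropriate constraint nodes) to a slightly thinned $\G_{n-L}'$ and conditions on the resulting local structure, rather than removing a variable from $\G_n$; this makes the grandchildren genuinely uniform in the base graph (subject only to the depth-$(\ell-1)$ conditioning handled by induction), so the independence and the applicability of the induction hypothesis come for free, whereas your ``remove $x$'' formulation---though dual and correct---would in practice reconstruct this additive step to control the conditioning rigorously.
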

\begin{proof}
The empirical distribution of marginals   converges in probability to $\int_0^1\atom_{w_x}dx$ (by assumption), and the distribution of the local neighborhood of $x$ converges to $\thet$
(\Prop~\ref{Prop_lwc}), but a priori we do not know how the two distributions are coupled.
To get a handle on this we proceed by induction on the sub-trees of $T$.
Observe that there is nothing to show if $\ell =0$ and  $T$ consists of just the root.  Now by induction, 
let us assume that (\ref{eq_WillsCoupling1}) holds for any depth-$(\ell-1)$ sub-tree of~$T$. 

We need to show that with high probability of the choice of the random factor graph $\G_n$, the empirical distribution $Y_\ell(T,\G_n)$ converges to that of $X_\ell(T,\vec\tau)$.  To accomplish this, we will approximate the moments of $Y_\ell(T,\G_n)$ and show that these converge to those of $X_\ell(T,\vec\tau)$. 

 Consider the following experiment to approximate the first moment of $Y_\ell(T,\G_n)$.  Fix $T, \ell$, and $\eps >0$.   We choose $L = L(\eps)$ large enough (as we will see below), and sample a random factor graph $\G_{n-L}^\prime$ with a slightly sparser constraint density than $\G_{n-L}$: instead of adding a Poisson number of constraints $\psi$ with mean $(n-L) \rho_\psi$, we add a Poisson number with mean $n \rho_\psi \cdot ((n-L)/n)^{k_\psi}$. 
 In particular, $\G_{n-L}^\prime$ has exactly the distribution of the sub-graph of $\G_n$ induced by the first $n-L$ variable nodes.
 Note that the difference in means is constant, much smaller than $\Theta(\sqrt n)$, the standard deviation of the number of each constraint type in $\G_{n-L}$. 
 The distribution of $\G_{n-L}^\prime$ therefore has total variation distance $o(1)$ to $\G_{n-L}$, and in particular, the assumption (\ref{eqRS}) holds for $\G_{n-L}^\prime$.   
 
We next add to $\G_{n-L}^\prime$ $L$ variable nodes $x_{n-L+1}, \dots, x_{n}$, along with a Poisson number of each type of constraint node of mean 
	$n k_\psi\rho_\psi ( 1- ((n-L)/n)^{k_\psi} )$, attached to uniformly random variable nodes from $\G_{n-L}$ and the new variable nodes, \textit{conditioned} on the event that at least one of the attached variables nodes of each new constraint node is one of the newly added variable nodes. The resulting factor graph has exactly the distribution of $\G_{n}$.

Now recalling that the tree $T$ is fixed, we condition on the event that the constraint nodes joined to each of the new variable nodes $x_{n-L+1}, \dots x_{n}$ are of the  number and type joined to the root of $T$, and their attached variable nodes, call them $x_{11}, \dots x_{1k}, \dots x_{L1}, \dots x_{Lk}$, have  depth $\ell-1$ neighborhoods matching those of the attached subtrees $T_1, \dots T_{k}$ of $T$. This event has probability bounded away from $0$ as $n \to \infty$ in $\G_{n+L}$, and so the resulting conditioned graph, $\G_{n}^\prime$, with the last $L$ variables nodes selected, has the distribution of $\G_{n}$ with $L$ variables nodes selected uniformly at  random from all  variable nodes whose depth-$\ell$ neighborhood matches that of $T$.  Let the set $\{ x_{i,a,y} \}$, $i= n-L+1, \dots n$, $a\in\partial_T x_i$, $y \in\partial_{T\downarrow}a$, denote the randomly chosen variable nodes from $\G_n$ which are attached to the new constraint nodes in the given positions. Note that whp $\{ x_{i,a,y} \}$ will only contain variable nodes from $\G_{n-L}^\prime$, as whp no constraint is attached to more than one variable node from a fixed constant-sized set.

 Now  the graph $\G_{n}^\prime$ induces a marginal distribution on the spin at each of the variable nodes $x_{n-L+1}, \dots x_{n}$.  Fix $\omega \in \Omega$, and call the $\omega$ values of these marginals $q_{n-L+1}(\omega), \dots, q_{n}(\omega)$. Let $\overline q(\omega) = \frac{1}{L} \sum_{i=n-L+1}^n q_{i}(\omega)$.  By choosing $L(\eps)$ large enough, we have that whp over the choice of $\G_{n-L}^\prime$ and probability at least $1- \eps$ over the choice of $\{ x_{i,a,y} \}$, $\overline q(\omega)$ is within $\eps$ of the mean of $Y_\ell(T,\G_{n})(\omega)$.  

Similarly, we can approximate the higher moments  of $Y_\ell(T,\G_{n})$, that is, for a vector $\omega_1, \dots, \omega_r \in \Omega^r$ and powers $i_1, \dots i_r$, the mean of $\prod_{j=1}^r  Y_\ell(T,\G_{n})(\omega_j)^{i_j}$.  In this case we again add $L=L(\eps)$ new variable nodes to $\G_{n-L}^\prime$ with the appropriate constraint nodes and attached variable nodes and condition that the local neighborhoods of $x_{n-L+1} \dots x_n$ match $T$ to form $\G_{n}^\prime$. Let $q_{i}(\omega_j)$ denote the marginal probability of $\omega_j$ at variable node $x_i$, $i=n-L+1, \dots , n$. Then let $\overline q = \frac{1}{L} \sum_{i=n-L+1}^{n} \prod_{j=1}^r q_{n+i}(\omega_j)^{i_j}$.  Again by choosing $L = L(\eps)$ large enough we can guarantee that whp over the choice of $\G_{n-L}^\prime$ and probability at least $1- \eps$ over the choice of $\{ x_{i,a,y} \}$, $\overline q$ is within $\eps$ of the corresponding higher moment of $Y_\ell(T,\G_{n})$. 

What remains to show is that these moment calculations converge to those of $X_\ell(T,\vec\tau)$. 
For $\omega_1, \dots, \omega_L \in \Omega$ define
	$$\vec\mu(\omega_1, \dots, \omega_L)=\mu_{\G_{n}^\prime} (\{\sigma(x_{n-L+1}) = \omega_1, \dots, \sigma(x_{n})=\omega_L\});$$
then $\vec\mu$ is a random variable, dependent on $\G_{n-L}'$ and the family $\{ x_{i,a,y} \}$.
Hence, if we condition on $\G_{n-L}'$, then $\vec\mu(\omega_1, \dots, \omega_L)$ remains random, namely dependent on $\{ x_{i,a,y} \}$.
As we saw in the previous paragraph, it suffices to show, for all $\omega_1, \dots, \omega_L \in \Omega$, that 
	$\vec\mu(\omega_1, \dots, \omega_L)$  converges in distribution, with high probability over the choice of $\G_{n-L}^\prime$, to
	$\vec X(\omega_1,\ldots,\omega_L)=\prod_{i=1}^k \tilde X^{(i)}_\ell(T,\vec\tau)(\omega_i)$,
	where the  $\tilde X^{(i)}_\ell(T,\vec\tau)$'s are independent samples from $X_\ell(T,\vec\tau)$.

 Let $Z_{n-L}$ be the partition function of $\G_{n-L}^\prime$, and $Z_{n}$ the partition function of $\G_{n}^\prime$ in the experiment above. Let $\{ x_{i,a,y} \}$, $i= n+1, \dots n_L$, $a\in\partial_T x_i$, $y \in\partial_{T\downarrow}a$, denote the randomly chosen variable nodes from $\G_n$ which are attached to the new constraint nodes in the given positions.  Condition on $\G_{n-L}^\prime$ and the choice of $\{ x_{i,a,y} \}$ and write
\begin{align*}
Z_{n-L} & = \sum_{\vec s \in \Omega^{\{ x_{i,a,y} \}}}  Z_{\vec s}
\end{align*}
where the vector $\vec s$ represents one set of possible values taken by the variable nodes, and $Z_{\vec s}$ is the partition function of $\G_n$ restricted to the set of assignments for which we have $\sigma(x_{i,a,y}) =s_{a,i,y}\, \forall i, a,y$.  
We can write
\begin{align*}
Z_{\vec s} &= Z_{n-L} \cdot \mu_{\G_n} (\sigma( \{ x_{i,a,y}\}) = \vec s).
\end{align*}
Given the family $\{x_{i,a,y}\}$, we have
\begin{align}
\label{eq:marginalFormula}
\vec\mu(\omega_1, \ldots,\omega_L) &=  \frac{    \sum_{\vec s \in \Omega^{\{ x_{i,a,y} \}} }\mu_{\G_{n-L}^\prime} (\sigma( \{ x_{i,a,y}\}) = \vec s) \prod_{a\in\partial_T x_{n-L+1}, \dots \partial_T x_{n}} \psi_a (\omega_i, \vec s_a)  }{ \sum_{\omega^\prime_1, \dots \omega^\prime_L \in \Omega^L} \sum_{\vec s \in \Omega^{\{ x_{i,a,y} \}} }\mu_{\G_{n-L}^\prime} (\sigma( \{ x_{i,a,y}\}) = \vec s) \prod_{a\in\partial_T x_{n-L+1}, \dots \partial_T x_{n}} \psi_a (\omega^\prime_i, \vec s_a) } \,,
\end{align}
where the quantities on both sides are deterministic numbers, as we have conditioned on $\G_{n-L}^\prime$ and the selection of $\{ x_{i,a,y} \}$. 
Now viewing the choice of $\{ x_{i,a,y} \}$ as random, we use the asymptotic factorization property (our assumption (\ref{eqRS}) and \Cor~\ref{Cor_factorise}, recalling that this holds whp for $\G_{n-L}^\prime$ just as it does for $\G_{n-L}$), and get that 
	$\vec\mu(\omega_1,\ldots,\omega_L)$ converges in distribution (whp over the choice of $\G_{n-L}^\prime$) to  \begin{align}
 \label{eqLdistrib}
&\frac{    \sum_{\vec s \in \Omega^{\{ x_{i,a,y} \}} } \prod_{i=1}^L \prod_{a\in\partial_T r_T} \psi_a (\omega_i, \vec s_a) \prod_{y\in\partial_{T\downarrow}a}\tilde X_{i,a,y}(\vec s_{i,a,y})   }{ \sum_{\omega^\prime_1, \dots \omega^\prime_L \in \Omega^L}  \sum_{\vec s \in \Omega^{\{ x_{i,a,y} \}} } \prod_{i=1}^L \prod_{a\in\partial_T r_T} \psi_a (\omega^\prime_i, \vec s_a) \prod_{y\in\partial_{T\downarrow}a}\tilde X_{i,a,y}(\vec s_{i,a,y})  }\,,
\end{align}
where the  $\tilde X_{i,a,y}$'s are independent samples from the respective distributions $X_{\ell-1}(T_{a,y},\vec\tau)$.  We rearrange (\ref{eqLdistrib}) to give the following convergence in distribution, whp over the choice of $\G_n'$:
\begin{align}\label{eqWillLargeL}
\vec\mu(\omega_1,\ldots,\omega_L)
 &\Rightarrow \prod_{i=1}^L \frac{   \prod_{a\in\partial_T r_T} \sum_{\vec s \in \Omega^{k_a}} \mathbf 1_{s_j=\omega_i} \cdot \psi_a (\vec s) \prod_{y\in\partial_{T\downarrow}a}\tilde X_{i,a,y}(\vec s_y) }{  \sum_{\omega^\prime \in \Omega}   \prod_{a\in\partial_T r_T} \sum_{\vec s \in \Omega^{k_a}} \mathbf 1_{s_j=\omega^\prime} \cdot \psi_a (\vec s) \prod_{y\in\partial_{T\downarrow}a}\tilde X_{i,a,y}(\vec s_y)} \,.
\end{align}
Finally, performing the calculation (\ref{eqLdistrib}) in reverse on $L$
independent copies of the tree $T$, we see that
the r.h.s.\ of (\ref{eqWillLargeL}) is distributed as $\vec X(\omega_1,\ldots,\omega_L)$.
In particular this holds for every fixed $L$ and  choice of $\omega_1, \dots \omega_L$, and so this proves convergence of the moments.
\end{proof}

\noindent
To obtain the desired Belief Propagation fixed point we let $\nu_{T,\ell}\in\cP^2(\Omega)$ be the distribution of $X_{\ell}(T,\vec\tau)$.
Moreover, let $\cF_\ell$ be the $\sigma$-algebra on $\fT$ generated by events $\{\partial^\ell\T\ism\partial^\ell T\}$ for $T\in\fT$.

\begin{corollary}\label{Lemma_martingale}
For any $\ell\geq0$ we have $\Erw[\nu_{\T,\ell+1}|\cF_\ell]=\nu_{\T,\ell}$.
\end{corollary}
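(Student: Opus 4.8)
The plan is to obtain the martingale identity by passing to the limit $n\to\infty$ in a ``tower of empirical distributions'' on the random factor graph $\G_n$, using \Lem~\ref{Lemma_WillsCoupling} and \Prop~\ref{Prop_lwc}. First note that $\nu_{\T,\ell}$ is $\cF_\ell$-measurable: from the first (Gibbs-measure) description of $X_\ell$, the variable $X_\ell(T,\vec\tau)$ depends on $T$ only through $\partial^\ell T$, so $\nu_{T,\ell}=\cL(X_\ell(T,\vec\tau))$ is a function of $\partial^\ell T$ alone. Since the possible values $t=\partial^\ell T$ form a countable set and $\cF_\ell$ is generated by the corresponding atoms, it therefore suffices to prove that for every finite tree $t$ with $\thet_\ell(t)>0$
\begin{equation}\label{eqmgoal}
\nu_{t,\ell}=\sum_{t'\,:\,\partial^\ell t'=t}\frac{\thet_{\ell+1}(t')}{\thet_\ell(t)}\,\nu_{t',\ell+1}.
\end{equation}
Indeed, because $\partial^\ell(\partial^{\ell+1}\T)=\partial^\ell\T$ the measure $\thet_\ell$ is the image of $\thet_{\ell+1}$ under the map $\partial^\ell$, so $\thet_{\ell+1}(t')/\thet_\ell(t)=\pr\brk{\partial^{\ell+1}\T=t'\mid\partial^\ell\T=t}$, and since $\nu_{\T,\ell+1}=\nu_{\partial^{\ell+1}\T,\ell+1}$ the right-hand side of \eqref{eqmgoal} is exactly $\Erw\brk{\nu_{\T,\ell+1}\mid\partial^\ell\T=t}$.

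To prove \eqref{eqmgoal}, fix $t$ and recall $V(\G_n,T,j)=\cbc{x\in V_n:\partial^j_{\G_n}x\ism\partial^j T}$ and the empirical distribution $Y_j(T,\G_n)$ of the marginals $\mu_{\G_n\marg x}$, $x\in V(\G_n,T,j)$, as in \Lem~\ref{Lemma_WillsCoupling}. Since $\partial^\ell(\partial^{\ell+1}_{\G_n}x)=\partial^\ell_{\G_n}x$, the set $V(\G_n,t,\ell)$ is the disjoint union of the sets $V(\G_n,t',\ell+1)$ over all $t'$ with $\partial^\ell t'=t$, and hence
\begin{equation}\label{eqmgfinite}
Y_\ell(t,\G_n)=\sum_{t'\,:\,\partial^\ell t'=t}\frac{\abs{V(\G_n,t',\ell+1)}}{\abs{V(\G_n,t,\ell)}}\,Y_{\ell+1}(t',\G_n).
\end{equation}
Now let $n\to\infty$. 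By \Prop~\ref{Prop_lwc} (its proof yields convergence in probability via Chebyshev's inequality) we have $\abs{V(\G_n,t',\ell+1)}/n\to\thet_{\ell+1}(t')$ and $\abs{V(\G_n,t,\ell)}/n\to\thet_\ell(t)>0$ in probability, so $\abs{V(\G_n,t,\ell)}>0$ whp and the weights in \eqref{eqmgfinite} converge to those in \eqref{eqmgoal}. By \Lem~\ref{Lemma_WillsCoupling}, $Y_j(T,\G_n)\to\nu_{T,j}$ in $d_1$ in probability for all $T,j$. Because $d_1$ is bounded on $\cP^2(\Omega)$ (as $\cP(\Omega)$ is compact), for any finite set $F$ of trees $t'$ refining $t$ the partial sum of \eqref{eqmgfinite} over $F$ converges in $d_1$ (in probability) to the partial sum of \eqref{eqmgoal} over $F$, while the complementary tails carry total mass $1-\sum_{t'\in F}\thet_{\ell+1}(t')/\thet_\ell(t)+o(1)$, which is arbitrarily small for $F$ large since $\sum_{t'\,:\,\partial^\ell t'=t}\thet_{\ell+1}(t')/\thet_\ell(t)=1$. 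Letting first $n\to\infty$ and then $F\uparrow$ in \eqref{eqmgfinite} yields \eqref{eqmgoal}, as desired.

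The same identity can also be proved intrinsically, by induction on $\ell$: using the second (recursive) construction, $X_{\ell+1}(\T,\vec\tau)$ is the Belief Propagation update at $r_\T$ applied to the messages $X_\ell(\T_i,\vec\tau^i)$ of the pendant subtrees $\T_i$ at the variable nodes at distance two from $r_\T$; conditioning on $\partial^\ell\T$ freezes the root's neighbourhood, and by the Galton--Watson branching property the extensions of the $\T_i$ remain mutually independent, with the conditional law of $X_\ell(\T_i,\vec\tau^i)$ equal, by the inductive hypothesis for $\ell-1$, to $\nu_{\partial^{\ell-1}\T_i,\ell-1}$ --- which is precisely the law of the message entering the computation of $\nu_{\partial^\ell\T,\ell}$. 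The base case $\ell=0$, i.e.\ $\Erw[\nu_{\T,1}]=\cL(\vec\tau_1)$, rests on assumption~\eqref{eqRS} (equivalently, on \Lem~\ref{Lemma_WillsCoupling} at level $0$). In either route the only genuine obstacle is the bookkeeping around the countably infinite family of refinements $t'$ of $t$ and the joint passage to the limit of the random weights and the random empirical measures; the tightness afforded by $\sum_{t'}\thet_{\ell+1}(t')/\thet_\ell(t)=1$ together with the boundedness of $d_1$ makes this routine.
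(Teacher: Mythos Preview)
Your main argument is correct and takes a genuinely different route from the paper. The paper proceeds essentially as in your second sketch: it first establishes the base case $\Erw[\nu_{\T,1}]=\nu_{\T,0}=\cL(w_{\vec x})$ by combining \Lem~\ref{Lemma_WillsCoupling} with assumption~(\ref{eqRS}) (averaging $Y_1(\T,\G_n)$ over all neighbourhood types $\partial^1\T$ recovers the full empirical marginal distribution, which converges to $\cL(w_{\vec x})$), and then for general $\ell$ invokes the Galton--Watson self-similarity --- conditioning on $\partial^\ell\T$, the subtrees pending on $V_\ell$ are i.i.d.\ copies of $\T$, so the base case applied at each $x\in V_\ell$ shows that the random boundary at $V_{\ell+1}$ induces on $V_\ell$ exactly the i.i.d.\ $\cL(w_{\vec x})$ boundary used in the construction of $X_\ell$.

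Your first argument bypasses the self-similarity step entirely. The identity~\eqref{eqmgfinite} is a tautology on $\G_n$ (refining the partition of variable nodes by depth-$(\ell+1)$ neighbourhood rather than depth-$\ell$), and then you pass to the limit on both sides using \Lem~\ref{Lemma_WillsCoupling} at levels $\ell$ and $\ell+1$ together with \Prop~\ref{Prop_lwc} for the weights; the countable sum is handled by the routine tightness you indicate. This buys a more uniform argument that treats all $\ell$ at once and does not need to isolate the base case, at the cost of the extra bookkeeping with the infinite family of refinements. The paper's route, by contrast, concentrates the appeal to~(\ref{eqRS}) (via \Lem~\ref{Lemma_WillsCoupling}) in the single step $\ell=0\to1$ and then propagates it structurally through the tree.
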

\begin{proof}
As a first step we are going to show that
	\begin{equation}\label{eqLemma_martingale1}
	\Erw\brk{\nu_{\T,1}|\cF_0}=\Erw\brk{\nu_{\T,1}}=\nu_{\T,0}.
	\end{equation}
Observe that by the definition of $X_0$ the right-hand side is in deterministic.
To prove (\ref{eqLemma_martingale1}) we apply \Lem~\ref{Lemma_WillsCoupling} to $\ell=1$.
Our assumption (\ref{eqRS}) implies that the average empirical distribution $\Erw[Y_1(\T,\G_n)]$ converges to the distribution of $w_{\vec x}$
for a uniform $\vec x\in[0,1)$ as $n\to\infty$.
The latter is precisely the law of $\nu_{\T,0}$.
Moreover, $\Erw[Y_1(\T,\G_n)]$ converges to $\Erw[\nu_{\T,1}]$ by \Lem~\ref{Lemma_WillsCoupling}.
Therefore, $\Erw[\nu_{\T,1}]=\nu_{\T,0}$.

For general values of $\ell$ we proceed by induction.
If we condition on the first $2\ell$ levels of the random tree $\T$, then the trees pending on the variable nodes at distance precisely
$2\ell$ from the root are independent copies of $\T$ itself.
Therefore, the levels $2\ell+1$ and $2\ell+2$ are distributed as $\partial^1\T$.
Hence,  let $V_\ell$ be the set of variable nodes at distance precisely $2\ell$ from the root.
Then (\ref{eqLemma_martingale1}) implies that the distribution 
of each $x\in V_\ell$ under a random boundary condition for $V_{\ell+1}$ as in the construction of $X_{\ell+1}$
is identical to the distribution of $w_{\vec x}$.
Further, all $x\in V_\ell$ are independent.
Consequently, $\Erw[\nu_{\T,\ell+1}|\cF_\ell]=\nu_{\T,\ell}$.
\end{proof}

\begin{proof}[Proof of \Thm~\ref{Thm_BP}]
\Cor~\ref{Lemma_martingale} implies that for any continuous $f:\cP(\Omega)\to\RR$
the sequence $(\int f d\nu_{\T,\ell})_\ell$ is a martingale w.r.t.\ $(\cF_\ell)_\ell$.
Because it is bounded, the martingale converges almost surely and in $L_1$.
Furthermore, because the space of $C(\cP(\Omega))$ of continuous functions on $\cP(\Omega)$ has a countable dense set with respect to uniform convergence
	(e.g., the polynomials with rational coefficients by Weierstrass),
we find that for almost all $\T$ the sequences $(\int f d\nu_{\T,\ell})_\ell$ converge for all $f\in C(\cP(\Omega))$.
Given that this event occurs, the map $\nu_{\T}^\star:f\mapsto\lim_{\ell\to\infty}\int f d\nu_{\T,\ell}$ is a
continuous linear functional on $C(\cP(\Omega))$ that satisfies $\nu_{\T}^\star(1)=1$.
Hence,  by the Riesz representation theorem $\nu_{\T}^\star$ is a probability measure on $\cP(\Omega)$.
Furthermore, our definition (\ref{eqLWC}) of the topology of $\fT$ ensures that for each $\ell$ the function $T\mapsto\nu_{T,\ell}$ is continuous.
Therefore, being a pointwise limit of the continuous functions,  the function $T\mapsto\nu_{\T}^\star$ is measurable.

To establish the fixed point property, we recall that \Lem~\ref{Lemma_WillsCoupling} and (\ref{eqWillLargeL}) imply the following.
For a tree $T$ and a constraint $a\in\partial_Tr_T$ and let
$(\tilde X_{a,y})_y$ be a family of independent copies of $X_{\ell-1}(T_{\downarrow y})$ for $y\in\partial_{T\downarrow}a$.
Then
\begin{align*}
X_{\ell}(T, \vec \tau) &\,\stacksign{d}= \,\frac{   \prod_{a\in\partial_T r_T} \sum_{\vec s \in \Omega^{k_a}} \mathbf 1_{s_j=\omega_i} \cdot \psi_a (\vec s) \prod_{y\in\partial_{T\downarrow}a}\tilde X_{a,y}(\vec s_y) }{  \sum_{\omega^\prime \in \Omega}   \prod_{a\in\partial_T r_T} \sum_{\vec s \in \Omega^{k_a}} \mathbf 1_{s_j=\omega^\prime} \cdot \psi_a (\vec s) \prod_{y\in\partial_{T\downarrow}a}\tilde X_{a,y}(\vec s_y)}
\end{align*}
Hence, taking the limit $\ell\to\infty$, we conclude that $\nu_{\T}^\star$ is a Belief Propagation fixed point.
\end{proof}

\begin{proof}[Proof of \Cor~\ref{cor:marginalneighbors}]
\Cor~\ref{cor:marginalneighbors} follows from the proof above, by taking $L=1$ in the construction, then from (\ref{eq:marginalFormula}) using \Cor~\ref{Cor_factorise} to show that the joint probability $\mu_{\G_{n-1}^\prime} (\sigma( \{ x_{a,y}\}) = \vec s)$ asymptotically factorizes.  Here we do not need to use the fact that the distribution of each marginal converges to the same thing, simply the factorization property. 
\end{proof}

\subsection{Proof of \Thm~\ref{Thm_Bethe}}\label{Sec_Bethe}
Throughout this section we keep the notation and the assumptions of \Thm~\ref{Thm_Bethe}.
Moreover, we let $\vec x=(\vec x_i)_{i\geq1}$ be a sequence of uniform random variables on $[0,1)$ that are mutually independent
and independent of everything else.
We begin with two claims regarding the average marginals of the fixed point and the Bethe free energy.

\begin{claim}\label{Claim_avgMarg}
Let $\vec x\in[0,1)$ be uniform.
Then $\Erw[\nu^\star_{\T}]=\cL(w_{\vec x})$.
\end{claim}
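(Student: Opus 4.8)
The plan is to combine the martingale property from \Cor~\ref{Lemma_martingale} with the construction of $\nu^\star_{\T}$ carried out in the proof of \Thm~\ref{Thm_BP}. First I would observe that $\nu_{T,0}$ is the (deterministic, $T$-independent) distribution of $X_0(T,\vec\tau)=\vec\tau_1$, which by the very definition of $\vec\tau$ equals $\cL(w_{\vec x})$ for uniform $\vec x\in[0,1)$. By \Cor~\ref{Lemma_martingale} the sequence $(\nu_{\T,\ell})_{\ell\ge0}$ is a martingale with respect to $(\cF_\ell)_\ell$ (note that $\nu_{\T,\ell}$ depends only on $\partial^\ell\T$, hence is $\cF_\ell$-measurable, as was already used in the proof of \Thm~\ref{Thm_BP}), so iterating the identity $\Erw[\nu_{\T,\ell+1}\mid\cF_\ell]=\nu_{\T,\ell}$ and taking unconditional expectations gives $\Erw[\nu_{\T,\ell}]=\nu_{T,0}=\cL(w_{\vec x})$ for every $\ell$. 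In particular, for every bounded measurable $f:\cP(\Omega)\to\RR$,
\begin{equation}\label{eqClaimAvg}
\Erw\brk{\int f\,d\nu_{\T,\ell}}=\int f\,d\cL(w_{\vec x})\qquad\text{for all }\ell\ge0.
\end{equation}

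Next I would pass to the limit $\ell\to\infty$. For continuous $f$ the compactness of $\cP(\Omega)$ makes $f$ bounded, and the proof of \Thm~\ref{Thm_BP} shows that $\int f\,d\nu_{\T,\ell}\to\int f\,d\nu^\star_{\T}$ almost surely and in $L_1$; hence the left-hand side of \eqref{eqClaimAvg} converges to $\Erw[\int f\,d\nu^\star_{\T}]$, which equals $\int f\,d\Erw[\nu^\star_{\T}]$ by Fubini. Thus $\int f\,d\Erw[\nu^\star_{\T}]=\int f\,d\cL(w_{\vec x})$ for every $f\in C(\cP(\Omega))$, and since a probability measure on the compact metric space $\cP(\Omega)$ is determined by the integrals of continuous functions, we conclude $\Erw[\nu^\star_{\T}]=\cL(w_{\vec x})$.

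The only step that needs any care is the interchange of $\Erw[\nix]$ with the limit $\ell\to\infty$, but this is immediate from the $L_1$-convergence of the bounded martingale $(\int f\,d\nu_{\T,\ell})_\ell$ established in the proof of \Thm~\ref{Thm_BP} (alternatively, dominated convergence applies directly with the uniform bound $\norm{f}_\infty$). So I do not expect a genuine obstacle here: the claim is essentially a bookkeeping consequence of \Cor~\ref{Lemma_martingale} together with the weak-limit construction of $\nu^\star_{\T}$.
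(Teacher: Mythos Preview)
Your argument is correct, but it proceeds along a different route from the paper's proof. The paper argues directly from the hypothesis~(\ref{eqThm_BP}) of \Thm~\ref{Thm_Bethe}: by~(\ref{eqRS}) the overall empirical distribution of the marginals $\mu_{\G_n\marg x}$ converges to $\cL(w_{\vec x})$, while by \Prop~\ref{Prop_lwc} this overall empirical distribution is (asymptotically) the $\thet$-average of the tree-conditioned empirical distributions $\nu_{\G_n,\T,\ell}$; since~(\ref{eqThm_BP}) says the latter converge to $\nu^\star_{\T}$, one obtains $\Erw[\nu^\star_{\T}]=\cL(w_{\vec x})$ in one line. Your proof instead reaches back into the \emph{construction} of $\nu^\star_{\T}$ in the proof of \Thm~\ref{Thm_BP}, exploiting the martingale identity of \Cor~\ref{Lemma_martingale} together with the $L_1$-convergence of $(\int f\,d\nu_{\T,\ell})_\ell$. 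Both are clean; the paper's version has the advantage of using only the stated hypothesis~(\ref{eqThm_BP}) rather than the internal machinery of \Thm~\ref{Thm_BP}'s proof, while yours makes transparent that the claim is just the $\ell\to\infty$ limit of the base case $\Erw[\nu_{\T,\ell}]=\nu_{\T,0}$.

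One small point worth noting: the hypothesis of \Thm~\ref{Thm_Bethe} refers to \emph{any} Belief Propagation fixed point $\nu^\star_{\T}$ satisfying~(\ref{eqThm_BP}), not a priori the specific one built as the martingale limit. Your proof tacitly identifies the two. This is harmless because~(\ref{eqThm_BP}) determines $\nu^\star_{\T}$ $\thet$-almost surely (if $\nu^\star$ and ${\nu'}^\star$ both satisfy~(\ref{eqThm_BP}), the triangle inequality gives $\Erw_{\T}[d_1(\nu^\star_{\T},{\nu'}^\star_{\T})]=0$), but it would be worth saying so explicitly.
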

\begin{proof}
Assumption (\ref{eqRS}) implies that the empirical distribution of the marginals of $\G_n$ converges in distribution to $\cL(w_{\vec x})$.
Hence, \Prop~\ref{Prop_lwc} and the assumption that $\nu^\star$ satisfies (\ref{eqThm_BP}) imply the assertion.
\end{proof}

\begin{claim}\label{Lemma_messages}
Let $(\hat{\vec\eta}_{\psi,j,i})_{\psi\in\Psi,j\in[k_\psi],i\geq1}$
	be a family of independent $\cP(\Omega)$-valued random variables with distribution
	\begin{align}\label{eqBetheAlternateA}
	\hat{\vec\eta}_{\psi,j,i}(\sigma_j)&\propto{\sum_{(\sigma_h)_h\in\Omega^{[k_\psi]\setminus\{j\}}}
		\psi(\sigma)\prod_{h\in[k_\psi]\setminus\{j\}}w_{\vec x_h}(\sigma_h)}
	\end{align}
Further, let $(d_{\psi,j})_{\psi\in\Psi,j\in[k_\psi]}$ be a family of independent random variables
such that $d_{\psi,j}$ has distribution $\Po(\rho_\psi/j)$.
Moreover, let $(\vec\eta_h)_{h\geq1}$ be a family of independent $\cP(\Omega)$-valued random variables with distribution
	\begin{align*}
	\vec\eta_h(\sigma)&\propto{\prod_{\psi\in\Psi,j\in[k_\psi],i\in[d_{\psi,j}]}\hat{\vec\eta}_{\psi,j,i}(\sigma)}.
	\end{align*}
Further, let
	\begin{align*}
	\phi&=\ln\sum_{\sigma\in\Omega}\prod_{\psi,j,i}\hat{\vec\eta}_{\psi,j,i}(\sigma),\\
	\hat\phi_{\psi}&=\ln\sum_{\sigma\in\Omega^{k_\psi}}\psi(\sigma)\prod_{h\in[k_\psi]}{\vec\eta}_h(\sigma_h),&
	\tilde\phi_{\psi,j}&=\ln\sum_{\sigma\in\Omega}\vec\eta_1(\sigma)\hat{\vec\eta}_{\psi,j,1}(\sigma)
		&(\psi\in\Psi,\,j\in[k_\psi]).
	\end{align*}
Then
	\begin{align}\label{eqBetheAlternate}
	\cB_\thet(\nu^\star)&=
		\Erw[\phi]+\sum_{\psi\in\Psi}\sum_{j\in[k_\psi]}\rho_\psi\brk{k_\psi^{-1}\Erw[\hat\phi_\psi]-\Erw[\tilde\phi_{\psi,j}]}.
	\end{align}
\end{claim}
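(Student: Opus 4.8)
The plan is to turn the definition (\ref{eqDefBFE}) of $\cB_\thet(\nu^\star_{\T})$ into the ``per variable / per constraint / per edge'' shape of (\ref{eqBetheAlternate}) by three moves: unpack the Galton--Watson structure of $\thet$ around the root, integrate out the subtrees pending below it by means of Claim~\ref{Claim_avgMarg}, and finally apply the Chen--Stein identity (\ref{eqmemoryless}) to the sums over $\partial_{\T}r_{\T}$.

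First I would observe that $\varphi_{\T}$, $\hat\varphi_{\T,a}$ and $\tilde\varphi_{\T,a}$ in (\ref{eqDefBFE1})--(\ref{eqDefBFE3}) depend on $\T$ only through the constraints in $\partial_{\T}r_{\T}$ together with the messages $\eta_{\T,y}$, drawn from $\nu^\star_{\T_{\downarrow y}}$, attached to the variables $y$ at distance two from the root. By the construction of $\thet$, conditionally on the root the set $\partial_{\T}r_{\T}$ decomposes, according to the weight function $\psi$ and the slot $j\in[k_\psi]$ occupied by $r_{\T}$, into independent Poisson families of sizes $d_{\psi,j}$ as in the Claim, and the $k_\psi-1$ remaining variables of each such constraint are roots of mutually independent copies of $\T$. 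Since each such subtree has law $\thet$ and, given it, the corresponding $\eta_{\T,y}$ has law $\nu^\star_{\T_{\downarrow y}}$, Claim~\ref{Claim_avgMarg} permits me, after taking the outer expectation over $\T$, to replace every $\eta_{\T,y}$ by an independent fresh $\cP(\Omega)$-valued variable of law $\cL(w_{\vec x})$. Substituting this into (\ref{eqmsg2}) turns $\hat\eta_{\T,a}$, for a type-$\psi$ constraint with $r_{\T}$ in slot $j$, into a copy of $\hat{\vec\eta}_{\psi,j,i}$ from (\ref{eqBetheAlternateA}); consequently $\Erw[\varphi_{\T}]=\Erw[\phi]$, and the root message (\ref{eqmsg1}) acquires the distribution of $\vec\eta_h$. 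That this distribution is again $\cL(w_{\vec x})$ --- so that the $k_\psi-1$ fresh messages which will enter $\hat\phi_\psi$ share the same law --- is exactly the fixed-point property of $\nu^\star$ combined once more with Claim~\ref{Claim_avgMarg}, equivalently the martingale identity \Cor~\ref{Lemma_martingale} at level one.

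It remains to handle $\sum_{a\in\partial_{\T}r_{\T}}\hat\varphi_{\T,a}/k_a$ and $\sum_{a\in\partial_{\T}r_{\T}}\tilde\varphi_{\T,a}$, which are not yet of product form because the cavity message $\tilde\eta_{\T,a}$ in (\ref{eqDefBFE1}) is built from \emph{all other} constraints at the root and hence depends on the counts $d_{\psi,j}$. Grouping the sum by $(\psi,j)$ and using exchangeability, $\Erw[\sum_a F_a]=\Erw[d_{\psi,j}F_{a_1}]$, where $a$ ranges over the $d_{\psi,j}$ constraints of type $\psi$ with $r_{\T}$ in slot $j$ and $F_{a_1}$ depends on the count only through $d_{\psi,j}$ (via the other $d_{\psi,j}-1$ such constraints). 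The Chen--Stein identity (\ref{eqmemoryless}) then rewrites this as $\rho_\psi$ times the expectation of the same quantity with one extra type-$\psi$ constraint added in slot $j$; for that added constraint the cavity message is assembled from the original $d_{\psi,j}$ constraints, so it has the law of $\vec\eta_1$, while its other $k_\psi-1$ variables are fresh of law $\cL(w_{\vec x})$. Hence the $(\psi,j)$-contribution of $\sum_a\hat\varphi_{\T,a}$ equals $\Erw[\hat\phi_\psi]$ and that of $\sum_a\tilde\varphi_{\T,a}$ equals $\Erw[\tilde\phi_{\psi,j}]$, and assembling the pieces (with $\sum_{j\in[k_\psi]}k_\psi^{-1}=1$) gives (\ref{eqBetheAlternate}).

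The main obstacle is precisely this last step: one has to keep careful track of which Poisson counts each factor of $\varphi_{\T}$, $\hat\varphi_{\T,a}$, $\tilde\varphi_{\T,a}$ actually sees, so that the Chen--Stein size-biasing produces exactly the distributions $\vec\eta_1$ and $\hat{\vec\eta}_{\psi,j,1}$ stipulated in the Claim --- in particular that the cavity message to a size-biased constraint has the same law as the plain root message --- which is where both the memorylessness of the Poisson and \Cor~\ref{Lemma_martingale} enter. Keeping the conditional-independence bookkeeping straight while integrating out the subtrees is the only other place that needs some care.
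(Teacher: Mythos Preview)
Your proposal is correct and follows essentially the same route as the paper: decompose $\partial_{\T}r_{\T}$ via the Galton--Watson structure into independent $\Po(\rho_\psi/k_\psi)$ families indexed by $(\psi,j)$, use Claim~\ref{Claim_avgMarg} to replace every $\eta_{\T,y}$ by an independent copy of $w_{\vec x}$ so that $\varphi_{\T}\stackrel d=\phi$, and then handle the sums $\sum_a\hat\varphi_{\T,a}$ and $\sum_a\tilde\varphi_{\T,a}$ by Chen--Stein size-biasing (\ref{eqmemoryless}) to turn the cavity message $\tilde\eta_{\T,a}$, which is a product over $|\partial_{\T}r_{\T}|-1$ constraints, into a copy of $\vec\eta_1$. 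You are in fact slightly more explicit than the paper on one point: for the non-root slots of $\hat\phi_\psi$ one needs $\vec\eta_h\stackrel d= w_{\vec x}$, which you correctly derive from the fixed-point property of $\nu^\star$ together with Claim~\ref{Claim_avgMarg}; the paper's proof leaves this implicit.
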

\begin{proof}
To show that (\ref{eqBetheAlternate}) and (\ref{eqDefBFE}) coincide, we recall the random variables from (\ref{eqDefBFE1})--(\ref{eqDefBFE3}).
Comparing (\ref{eqDefBFE}) and (\ref{eqBetheAlternate}), we see that it suffices to show
	\begin{align}\label{eqBetheAlternate_1}
	\Erw[\phi]&=\Erw[\varphi_{\T}],&
	\sum_{\psi\in\Psi,j\in[k_\psi]}\rho_\psi\Erw[\hat\phi_{\psi,j}]&=\Erw\brk{\sum_{a\in\partial_{\T}r_{\T}}\hat\varphi_{\T,a}},&
	\sum_{\psi\in\Psi,j\in[k_\psi]}\rho_\psi\Erw[\tilde\phi_{\psi,j}]&=\Erw\brk{\sum_{a\in\partial_{\T}r_{\T}}\tilde\varphi_{\T,a}}.
	\end{align}

To prove the first equality, we recall that by construction the root of the random tree $\T$ has $\Po(\rho_\psi)$ children $a$ with weight function $\psi$.
For each of them the position $j\in[k_\psi]$ such that $\partial_{\T}(a,j)=r_{\T}$ is uniform, and they are independent.
Therefore, for each $j$ the number of $a$ with $\partial_{\T}(a,j)=r_{\T}$ has distribution $\Po(\rho_\psi/k_\psi)$, and these random variables
are independent.
Hence, the distribution of the offspring of the root of $\T$ coincides with the joint distribution of the random variables $(d_{\psi,j})_{\psi,j}$.
Further, for each $a\in\partial_{\T}r_{\T}$ and every $y\in\partial_{\T}a$ the tree $\T_{\downarrow y}$ pending on $y$
is an independent copy of $\T$.
Therefore, Claim~\ref{Claim_avgMarg} implies that the distribution of the random variables $\eta_{\T,y}$
that go into (\ref{eqmsg2}) coincides with the distribution of $w_{\vec x}$ for a uniform $\vec x\in[0,1)$.
Moreover, the random variables $\eta_{\T,y}$ are mutually independent.
Consequently, comparing (\ref{eqBetheAlternateA}) and (\ref{eqmsg2}), we see the distribution of $\hat\eta_{\T,a}$ that for a constraint
node $a$ with $\psi_a=\psi$ and $\partial_{\T}(a,j)=r_{\T}$ coincides with the distribution of $\hat{\vec\eta}_{\psi,j,i}(\sigma_j)$ for any $i\geq1$.
Because the $(\hat\eta_{\T,a})_{a\in\partial_{\T}r_{\T}}$ are mutually independent, we thus see that
$\varphi_{\T}$ has the same distribution as $\phi$.
In particular, $\Erw[\varphi_{\T}]=\Erw[\phi]$.

Further, we derive the middle equation from the Chen-Stein property (\ref{eqmemoryless}).
Specifically,  remembering that we picked one specific representative of each isomorphism class,
we let $a_{\T}$ be a random neighbor of the root of $\T$.
Then
	\begin{align}\label{eqForgetMemoryless}
	\Erw\brk{\sum_{a\in\partial_{\T}r_{\T}}\hat\varphi_{\T,a}}&=
		\Erw\brk{|\partial_{\T}r_{\T}|\hat\varphi_{\T,a_{\T}}}.
	\end{align}
Moreover, by a similar argument as in the previous paragraph the random variables $\eta_{\T,y}$
for $y$ at distance two from $r_{\T}$ are independent copies of $w_{\vec x}$.
Hence, the $\tilde\eta_{\T,a_{\T}}$ factor consists of $|\partial_{\T}r_{\T}|-1$ independent factors.
By comparison, the terms $\vec\eta_{h}$ comprise of $|\partial_{\T}r_{\T}|$ independent factors.
Therefore, applying (\ref{eqmemoryless}) to (\ref{eqForgetMemoryless}), we obtain the middle equation of (\ref{eqBetheAlternate_1}).
The last equation follows form a similar argument.
\end{proof}

\noindent
To derive the theorem from Claim~\ref{Lemma_messages}
we employ the Aizenman-Simms-Starr scheme~\cite{Aizenman}, i.e., the observation that
	\begin{align}\label{eq_Lemma_BFE1}
	\frac1n\Erw[\ln Z_{\G_n}]&=\frac1n\sum_{h=1}^n\Erw\ln\frac{Z_{\G_h}}{Z_{\G_{h-1}}}
	\end{align}
(with the convention that $\G_0$ is the empty graph and $Z_{\G_0}=1$).
The assertion follows from (\ref{eq_Lemma_BFE1}) if we can show that
	\begin{align}\label{eq_Lemma_BFE2}
	\lim_{n\to\infty}\Erw\ln\frac{Z_{\G_{n+1}}}{Z_{\G_n}}&=\cB_\thet(\nu^\star).
	\end{align}

Hence, we need to compare $\G_{n+1},\G_n$ for large $n$.
To this end, we couple the two random graphs 
as follows.
Let $\rho_\psi'=(n/(n+1))^{k_\psi}\rho_\psi$ and let
 $\G'$ be a random factor graph on the variable set $V_n$ obtained by including $m_\psi'=\Po(\rho_\psi' n)$ random constraints of type $\psi$  for each $\psi\in\Psi$.
Then the distribution of $\G'$ coincides with the distribution of $\G_{n+1}$ with variable $x_{n+1}$ and all incident constraints removed.
Further, obtain  $\G''$ by adding $m_{\psi}''=\Po((\rho_\psi-\rho_{\psi}')n)$ random constraints to $\G'$ independently for each $\psi\in\Psi$.
Then $\G''$ is distributed as $\G_n$.
Finally, obtain $\G'''$ from $\G'$ by adding one variable $x_{n+1}$ and $m_\psi'''=\Po(k_\psi\rho_\psi)$ random constraints of type $\psi$
that are incident to $x_{n+1}$ independently for each $\psi\in\Psi$. 
Then the distribution of $\G'''$ matches that of $\G_{n+1}$.

To calculate $\Erw\ln (Z_{\G''}/Z_{\G'})$, fix some $\eps>0$.
There is a number $c>0$ such that $\max\{|\ln\psi(\sigma)|:\psi\in\Psi,\sigma\in\Omega^{k_\psi}\}<c$.
Hence, because the tails of the Poisson decay sub-exponentially, there exists $L=L(\eps)>0$ such that
	\begin{align}\label{eq_Lemma_BFE3}
	\textstyle\Erw\brk{\vecone\{\sum_{\psi} m_\psi'''>L\}|\ln (Z_{\G'''}/Z_{\G'})|}&<\eps.
	\end{align}
We are now going to insert the new constraints into $\G'$ one by one and track the impact of each insertion.
Number the new constraints in some way as $a_1'',\ldots,a_l''$ with $l\leq L$, let $\G''_0=\G'$ and let
$\G''_i$ be the obtained by inserting $a_1'',\ldots,a_i''$.
Because the total number of constraints that we add is of lower order than the standard variation $\Theta(\sqrt n)$ of the number of constraints of each type,
the total variation distance of $\G''_i$ and $\G_n$ is $o(1)$ for each $i\leq L$.
Therefore, if we let $(x_{ij})_{j\in[k_i]}$ be the family of independent, uniformly chosen neighbors of $a_i''$, then
\Cor~\ref{Cor_factorise} and our assumption (\ref{eqRS}) imply that with high probability
	\begin{align*}
	\TV{\mu_{\G''_{i-1}\marg \{x_{ij}:j\in[k_i]\}}-\bigotimes_{j\in[k_i]}\mu_{\G''_{i-1}\marg x_{ij}}}=o(1)\qquad\mbox{as }n\to\infty.
	\end{align*}
Together with (\ref{eqRS}) 
this implies that the vector $(\mu_{\G'_{i-1}\marg x_{ij}})_{j\in[k_\psi]}$ converges in distribution to $(w_{\vec x_j})_{j\in[k_\psi]}$ 
for independent uniform $\vec x_j\in[0,1)$.
Hence, 
	\begin{align}\label{eq_Lemma_BFE5}
	\Erw\ln\bc{Z_{\G''_i}/Z_{\G''_{i-1}}}&=\Erw\brk{\ln\sum_{\sigma\in\Omega^{k_i}}\psi_{a_i''}(\sigma)\prod_{j\in[k_1]}w_{\vec x_j}(\sigma_j)}+o(1).
	\end{align}
Summing (\ref{eq_Lemma_BFE5}) up over $l\leq L$, using (\ref{eq_Lemma_BFE3}) and letting $\eps\to0$ sufficiently slowly, we obtain
	\begin{align}\label{eq_Lemma_BFE6}
	\Erw\ln\bc{Z_{\G''}/Z_{\G'}}&=\sum_{\psi\in\Psi}(\rho_\psi-\rho_\psi')
		\Erw\brk{\ln\sum_{\sigma\in\Omega^{k_\psi}}\psi(\sigma)\prod_{j\in[k_\psi]}w_{\vec x_j}(\sigma_j)}+o(1).
	\end{align}
Finally, in the notation of Claim~\ref{Lemma_messages}, (\ref{eq_Lemma_BFE6}) reads
	\begin{align}\label{eq_Lemma_BFE6a}
	\Erw\ln\bc{Z_{\G'''}/Z_{\G'}}&=\sum_{\psi\in\Psi}(\rho_\psi-\rho_\psi')
		\Erw\brk{\hat\phi_\psi}+o(1).
	\end{align}

To calculate $\Erw\ln\bc{Z_{\G'''}/Z_{\G'}}$ let $a_1,\ldots,a_l$ be the new constraints attached to $x_{n+1}$.
With probability $1-O(1/n)$ the new variable $x_{n+1}$ appears precisely once in each $a_i$.
If so, then by the same token as in the previous paragraph the joint distribution $\mu_{\G'\marg Y}$ of the variables
	$Y=\bigcup_{i\leq l}\partial_{\G'''}a_i\setminus\{x_{n+1}\}$ converges to 
	$(w_{\vec x_y})_{y\in Y}$ with independent uniform $\vec x_y\in[0,1)$.
Consequently,
	\begin{align}\label{eq_Lemma_BFE7}
	\Erw\ln\bc{Z_{\G'''}/Z_{\G'}}&=o(1)+\Erw\brk{\ln\sum_{\sigma
	\in\Omega^{\{x_{n+1}\}\cup Y}}
    \prod_{i=1}^l\bc{\psi_{a_i}((\sigma_y)_{y\in\partial_{\G'''}a_i})\prod_{y\in Y\cap  
    \partial_{\G'''}a_i} w_{\vec x_y}(\sigma_y)}}.
	\end{align}
We introduce probability distributions on $\Omega$ by letting
	\begin{align*}
	\hat\nu_i(\sigma_{x_{n+1}})&\propto\sum_{(\sigma_y)_y\in\Omega^{\partial_{\G'''} a_i\setminus\{x_{n+1}\}}}
		\psi_{a_i}(\sigma)\prod_{y\in\partial_{\G'''} a_i\setminus\{x_{n+1}\}}w_{\vec x_y}(\sigma_y),&
	\nu(\sigma_{x_{n+1}})&\propto\prod_{i=1}^l\hat\nu_i(\sigma_{x_{n+1}})\qquad(\sigma_{x_{n+1}}\in\Omega).
	\end{align*}
Then (\ref{eq_Lemma_BFE7}) becomes
	\begin{align}\nonumber
	\Erw\ln\bc{Z_{\G'''}/Z_{\G'}}+o(1)&=
		\Erw\brk{\ln\sum_{\sigma_{x_{n+1}}}\prod_{i=1}^l\brk{\hat\nu_i(\sigma_{x_{n+1}})
				\sum_{\tau_{x_{n+1}}}\hat\nu_i(\tau_{x_{n+1}})}}\\
		&=\Erw\brk{\ln\sum_{\sigma_{x_{n+1}}}\prod_{i=1}^l\hat\nu_i(\sigma_{x_{n+1}})}
			+\sum_{i=1}^l\Erw\brk{\ln\sum_{\tau_{x_{n+1}}}\hat\nu_i(\tau_{x_{n+1}})}.
				\label{eq_Lemma_BFE9}
	\end{align}
Further,
	\begin{align}
	\ln\sum_{\tau_{x_{n+1}}}\hat\nu_i(\tau_{x_{n+1}})&=
		\ln\brk{\sum_{\tau\in\Omega^{\partial_{\G'''}a_i}}\psi_{a_i}(\tau)\nu(\tau_{x_{n+1}})
			\prod_{y\in\partial_{\G'''}a_i\setminus\{x_{n+1}\}} w_{\vec x_y}(\tau_y)}
		-\ln\sum_{\tau_{x_{n+1}}}\nu(\tau_{x_{n+1}})\hat\nu_i(\tau_{x_{n+1}}).
		\label{eq_Lemma_BFE10}
	\end{align}
To connect these formulas with \Lem~\ref{Lemma_messages}, we observe that
$\sum_{\sigma_{x_{n+1}}}\prod_{i=1}^l\hat\nu_i(\sigma_{x_{n+1}})$ is distributed as $\phi$ from \Lem~\ref{Lemma_messages}.
Moreover, by the Chen-Stein property we have
	\begin{align*}
	\Erw\brk{\sum_{i=1}^l\ln{\sum_{\tau}\psi_{a_i}(\tau)\nu(\tau_{x_{n+1}})\prod_{y\in\partial_{\G'''}a_i} w_{\vec x_y}(\tau_y)}}
		&=\sum_{\psi\in\Psi}k_\psi\rho_\psi\Erw[\hat\phi_{\psi}],\\
	\Erw\brk{\sum_{i=1}^l\ln\sum_{\tau_{x_{n+1}}}\nu(\tau_{x_{n+1}})\hat\nu_i(\tau_{x_{n+1}})}&=
		\sum_{\psi\in\Psi}\sum_{j\in[k_\psi]}\rho_\psi\Erw[\tilde\phi_{\psi,j}].
	\end{align*}
Therefore, 
	\begin{align}		\label{eq_Lemma_BFE11}
	\Erw\ln\bc{Z_{\G'''}/Z_{\G'}}&=\Erw[\phi]+\sum_{\psi\in\Psi}
	    k_\psi\rho_\psi\Erw[\hat\phi_\psi]
		+\sum_{\psi\in\Psi,j\in[k_\psi]}\rho_\psi\Erw[\tilde\phi_{\psi,j}]
		+o(1).
	\end{align}
Finally, combining (\ref{eq_Lemma_BFE6a}) and (\ref{eq_Lemma_BFE11}),
we obtain (\ref{eq_Lemma_BFE2}).

\end{document}